\theoremstyle{plain}
\newtheorem{thm}{Theorem}[section]
\newtheorem{prop}[thm]{Proposition} 
\newtheorem{lemma}[thm]{Lemma}
\newtheorem{cor}[thm]{Corollary}
\newtheorem{con}[thm]{Conjecture}
\newtheorem*{conjec}{Strong Conciseness Conjecture}
\theoremstyle{definition}
\newtheorem{defn}[thm]{Definition}
\newtheorem{example}[thm]{Example}
\newtheorem*{acknowledgements}{Acknowlwdgements}
\theoremstyle{remark}
\newtheorem{rem}[thm]{Remark}
\numberwithin{equation}{section}
\newcommand{\uo}{\underline{\omega}}
\renewcommand{\epsilon}{\varepsilon}
\newcommand{\ue}{\underline{\epsilon}}
\newcommand{\kr}{r} 
\DeclareMathOperator*{\dotprod}{\dot\prod}
\title[Strong conciseness in profinite groups]{Strong conciseness in
  profinite groups}
\subjclass[2010]{
20E18, 
20E26, 
20F10, 
20F18, 
20F12
}
\keywords{ Profinite group, group word, verbal subgroup, conciseness, multilinear commutator word}
\author[E.\ Detomi]{Eloisa Detomi}
\address{Dipartimento di Ingegneria dell'Informazione\\
 Universit\`a  degli Studi di Padova \\
Via Gradenigo 6/b\\
 35131 Padova, Italy} 
\email{eloisa.detomi@unipd.it}
\author[B.\ Klopsch]{Benjamin Klopsch}
\address{Heinrich-Heine-Universit\"{a}t D\"{u}sseldorf\\
 Mathematisches Institut\\
 Universit\"{a}tsstr. 1\\
 40225\\
 D\"{u}sseldorf, Germany} 
\email{klopsch@math.uni-duesseldorf.de}
\author[P.\ Shumyatsky]{Pavel Shumyatsky}
\address{Department of Mathematics\\
 University of Brasilia\\
 Brasilia-DF
 70910-900 \\
 Brazil} 
\email{pavel@unb.br}
\begin{document}
\maketitle

\begin{abstract}
  A group word $w$ is said to be strongly concise in a class
  $\mathcal{C}$ of profinite groups if, for every group $G$ in
  $\mathcal{C}$ such that $w$ takes less than $2^{\aleph_0}$ values
  in~$G$, the verbal subgroup $w(G)$ is finite.  Detomi, Morigi and
  Shumyatsky established that multilinear commutator words -- and the
  particular words $x^2$ and~$[x^2,y]$ -- have the property that the
  corresponding verbal subgroup is finite in a profinite group $G$
  whenever the word takes at most countably many values in $G$.  They
  conjectured that, in fact, this should be true for every word.  In
  particular, their conjecture included as open cases power words and
  Engel words.

  In the present paper, we take a new approach via parametrised words
  that leads to stronger results.  First we prove that multilinear
  commutator words are strongly concise in the class of all profinite
  groups.  Then we establish that every group word is strongly concise
  in the class of nilpotent profinite groups.  From this we deduce,
  for instance, that, if $w$ is one of the group words $x^2$, $x^3$,
  $x^6$, $[x^3,y]$ or $[x,y,y]$, then $w$ is strongly concise in the
  class of all profinite groups.  Indeed, the same conclusion can be
  reached for all words of the infinite families
  $[x^m,z_1,\ldots,z_r]$ and $[x,y,y,z_1,\ldots,z_r]$, where
  $m \in \{2,3\}$ and~$r \ge 1$.
 \end{abstract}


\section{Introduction} 
Let $w = w(x_1,\ldots,x_{\kr})$ be a group word, i.e.\ an element of the
free group on $x_1, \ldots, x_{\kr}$.  We take an interest in the set of
all $w$-values in a group $G$ and the verbal subgroup generated by it;
they are
\[
G_w = \{ w(g_1, \ldots, g_{\kr}) \mid g_1, \ldots, g_{\kr} \in G \} \qquad
\text{and} \qquad w(G) = \langle G_w \rangle.
\] 
In the context of topological groups $G$, we write $w(G)$ to denote
the closed subgroup generated by all $w$-values in~$G$.

The word $w$ is said to be \emph{concise} in a class $\mathcal{C}$ of
groups if, for each $G$ in $\mathcal{C}$ such that $G_w$ is finite, also
$w(G)$ is finite.  For topological groups, especially profinite
groups, a variation of the classical notion arises quite naturally: we
say that $w$ is \emph{strongly concise} in a class $\mathcal{C}$ of
topological groups if, for each $G$ in $\mathcal{C}$, already the
bound $\lvert G_w \rvert < 2^{\aleph_0}$ implies that $w(G)$ is
finite.

A conjecture proposed by Philip Hall (e.g.\ see~\cite{Tu64}) predicted
that every word $w$ would be concise in the class of all groups, but
almost three decades later the assertion was famously refuted by
Ivanov~\cite{Iv89}.  On the other hand, Merzlyakov~\cite{Me67} showed
already in the 1960s that every word is concise in the class of linear
groups.  This naturally leads to the question whether every word is
concise in the class of residually finite groups, or equivalently in
the class of profinite groups.  Lately, this question was highlighted
by Jaikin-Zapirain~\cite{Ja08}, who used Merzlyakov's theorem in his
investigations of verbal width in finitely generated pro-$p$
groups; compare also~\cite{Se09}.

In~\cite{DeMoSh16}, Detomi, Morigi and Shumyatsky suggested a
strengthened profinite version of Hall's conciseness
conjecture, namely that for every  word $w$ and
 every profinite group $G$, the bound
$\lvert G_w \rvert \le {\aleph_0}$ implies that $w(G)$ is finite.
They verified this for multilinear commutator words, also known as
 outer-commutator  words (see Section~\ref{sec:mcw}), as
well as  for  the particular words $x^2$ and~$[x^2,y]$.
Their considerations relied on the Baire category theorem, but a more
direct argument (see Section~\ref{sec:preliminaries}) allows us to
deal with a  natural  stronger form of the conjecture. 

 For short, we say that a word $w$ is strongly concise if
it is strongly concise in the class of all profinite groups.

\begin{conjec}
  Every group word $w$ is strongly concise.
\end{conjec}

In the present paper, we initiate a systematic investigation of this
conjecture and produce positive evidence for it.  Among the words
treated in~\cite{DeMoSh16}, the special power word $x^2$ is the only
one for which a simple replacement of the Baire category theorem by
Proposition~\ref{pro:instead-of-Baire} below yields that it is
strongly concise.  More work is needed to confirm the Strong
Conciseness Conjecture for multilinear commutator words.

\begin{thm}\label{thm:mcw-more} Every multilinear
  commutator word is strongly concise.
\end{thm}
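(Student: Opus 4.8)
The plan is to deduce Theorem~\ref{thm:mcw-more} from a more flexible statement that can be attacked by induction: every \emph{parametrised} multilinear commutator word is strongly concise in the class of all profinite groups. By a parametrised multilinear commutator word I mean, in the spirit of the framework to be set up in the preliminary section, a multilinear commutator in which some argument slots are ordinary variables $x_1,\dots,x_r$ while the others are parameters constrained to range over prescribed closed conjugation-invariant subsets of the ambient group (in practice: the whole group, or value sets of simpler such words); its value set $G_w$ and verbal subgroup $w(G)$ are formed by letting the variables run over all of $G$ and the parameters over their sets. Such objects are precisely what one obtains when the outermost bracket of an honest multilinear commutator word is peeled off and some arguments are frozen, so the class is closed under the reduction driving the induction. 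The induction is on the number of honest variables; the trivial base $w=x_1$ gives $G_w=G$, and since an infinite profinite group has at least $2^{\aleph_0}$ elements, $\lvert G_w\rvert<2^{\aleph_0}$ forces $G$, hence $w(G)=G$, to be finite.

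The engine is Proposition~\ref{pro:instead-of-Baire}, which I would establish and then invoke in place of the Baire category theorem. The set $G_w$ is the continuous image of the profinite space $G^r$ (times the compact parameter space), hence compact Hausdorff; a non-empty perfect compact Hausdorff space carries a Cantor scheme of non-empty closed subsets whose $2^{\aleph_0}$ branch intersections are distinct points, so it has at least $2^{\aleph_0}$ elements; therefore a compact Hausdorff space of cardinality less than $2^{\aleph_0}$ has empty Cantor--Bendixson kernel, i.e.\ is scattered, and, if non-empty, possesses an isolated point $a$. Then $w^{-1}(a)$ is simultaneously open and closed in the domain, so it contains a box $g_1N\times\dots\times g_rN$ for suitable $g_i\in G$ and an open normal subgroup $N\trianglelefteq G$; in other words $w$ is locally constant, $w(g_1n_1,\dots,g_rn_r)=w(g_1,\dots,g_r)$ for all $n_i\in N$ (uniformly in the parameters, when we parametrise). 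This is exactly the consequence of Baire that the countable-case arguments of Detomi--Morigi--Shumyatsky~\cite{DeMoSh16} exploit, so their machinery accepts the weaker hypothesis $\lvert G_w\rvert<2^{\aleph_0}$.

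For the inductive step write $w=[u,v]$ with $u,v$ multilinear commutator words (possibly already parametrised) in disjoint slots and $r\ge 2$. If $v$ involves an honest variable, freeze all arguments of $v$, turning $v$ into a parameter $c$ ranging over the value set of $v$; one checks, using the normality of the verbal subgroup of the original word, that the resulting parametrised word $[u(x_1,\dots,x_s);c]$ has strictly fewer honest variables but the \emph{same} value set $G_w$ and the \emph{same} verbal subgroup $w(G)=[u(G),v(G)]$, so the inductive hypothesis applies directly; symmetrically one may freeze $u$. The genuinely hard case is therefore $w$ with a single honest variable, the other slots being parameters; here one runs a secondary induction on the number of parameters, takes the locally constant tuple supplied by Proposition~\ref{pro:instead-of-Baire}, and feeds it into the commutator identities $[ab,c]=[a,c]^b[b,c]$ and $[a,bc]=[a,c][a,b]^c$ to deduce that the relevant iterated commutator $[G,A_1,\dots,A_\ell]$ and its variants are finite. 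Since $G_w$ is a normal subset of $G$, a Dietzmann/Neumann/Schur-type argument upgrades these partial statements to finiteness of $w(G)$ itself; throughout one may assume $G$ is second countable, as $w(G)$ can be infinite only if it is already infinite in some countably based quotient.

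The main obstacle is the inductive bookkeeping. One must choose a complexity measure and a class of parametrised words preserved by the ``freeze a sub-commutator into a parameter'' move --- including the awkward points that a frozen sub-word may contribute no honest variable, and that its parameter must range over the word's value set rather than over the (possibly much larger) verbal subgroup it generates --- while never relaxing the global bound $\lvert G_w\rvert<2^{\aleph_0}$. The technical heart is the commutator-calculus computation in the single-variable case that converts local constancy of $w$ on a coset of $N$ into honest finiteness of (a controlled part of) the iterated commutator, carried out uniformly in the parameters; it is exactly this demand for uniformity that makes the parametrised formulation indispensable rather than a mere convenience, while the topological input (Cantor--Bendixson), the reduction to second-countable groups, and the Dietzmann-type endgame are routine once the framework is in place.
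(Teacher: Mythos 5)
Your topological input is sound: the Cantor--Bendixson derivation of Proposition~\ref{pro:instead-of-Baire} is a correct alternative to the paper's argument, and the reduction to ``find an isolated fibre, hence a coset box $g_1N\times\cdots\times g_rN$ on which $w$ is constant'' is exactly the right starting point, as is the base case. But the proof has a genuine gap precisely where you yourself locate ``the technical heart'': the single-variable-with-parameters case. Saying that one feeds the locally constant tuple into $[ab,c]=[a,c]^b[b,c]$ and $[a,bc]=[a,c][a,b]^c$ to deduce finiteness of $[G,A_1,\dots,A_\ell]$ ``and its variants'' is a plan, not an argument. The content actually needed is: (i) constancy of $w$ on a box forces $w_I(g_iH;H)=1$ for every \emph{proper} subset $I$ of the slots (Corollary~\ref{cor:uno}, imported from \cite{DMS-cosets}); (ii) the translation invariance $w_I(g_ih_i;h_i)=w_I(g_i;h_i)$ under the hypothesis that all smaller $w_J(G;H)$ vanish (Corollary~\ref{cor:M}); and (iii) the induction over subsets $I$ in Lemmas~\ref{step1} and~\ref{basic-step} that threads these together, first producing an open $H$ with $w(H)=1$ and then shrinking to $V$ with $w_J(G;V)=1$ for all proper $J$, whence $G_w$ is a finite set of values on a transversal. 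None of this is routine commutator calculus, your proposal supplies no substitute for it, and the required uniformity in the parameters that you flag as ``indispensable'' is exactly the part left unproved.

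Two smaller points. The identity $w(G)=[u(G),v(G)]$ asserted when you freeze $v$ is not needed for your reduction (the frozen parametrised word has the same value set, hence generates the same closed subgroup), but as stated it is unjustified and should not be relied on. More importantly, the endgame ``$G_w$ finite $\Rightarrow w(G)$ finite'' is not a bare Dietzmann argument: the elements of $G_w$ are not known in advance to have finite order, and this implication is precisely the conciseness of outer-commutator words, for which one must invoke \cite{Wi74} (or \cite[Theorem~1.1]{DeMoSh16}), as the paper does.
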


Guided by an interest in power words $x^m$ of exponent $m \ge 3$ and
$n$-Engel words $[x,_{n}y] = [x,y,\ldots,y]$, where $y$ appears
$n \ge 2$ times, we began an investigation of some specific words,
such as $x^3$ and~$[x,y,y]$.  Later we discovered that
 the  relevant computations could be subsumed under a
common approach.   The main outcome  of this consolidation
is the following result.

\begin{thm} \label{thm:w-str-concise-in-nilp-gp}
  Every group word $w$ is strongly concise in the class of nilpotent
  profinite groups.
\end{thm}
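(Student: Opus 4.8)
The plan is to reduce first to pro-$p$ groups and then to bring Proposition~\ref{pro:instead-of-Baire} and the nilpotency to bear through the device of parametrised words. A nilpotent profinite group $G$ is the direct product $G=\prod_{p}G_{p}$ of its Sylow pro-$p$ subgroups, and for any word $w$ one has $G_{w}=\prod_{p}(G_{p})_{w}$ and $w(G)=\prod_{p}w(G_{p})$. If $(G_{p})_{w}\neq\{1\}$ for infinitely many primes $p$, then $\lvert G_{w}\rvert\ge 2^{\aleph_0}$; so the hypothesis $\lvert G_{w}\rvert<2^{\aleph_0}$ forces $w(G_{p})=1$ for all but finitely many $p$, while $\lvert (G_{p})_{w}\rvert\le\lvert G_{w}\rvert<2^{\aleph_0}$ for every $p$. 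Hence it suffices to show that $w(G)$ is finite when $G$ is a nilpotent pro-$p$ group with $\lvert G_{w}\rvert<2^{\aleph_0}$.

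Next I would use that $w$ induces a continuous map $G^{r}\to G$ whose image $G_{w}$ has fewer than $2^{\aleph_0}$ elements: by Proposition~\ref{pro:instead-of-Baire}, which replaces the Baire category argument available in the countable case, there are an open normal subgroup $N\trianglelefteq G$ and a tuple $\mathbf a=(a_{1},\dots,a_{r})\in G^{r}$ such that $w(a_{1}n_{1},\dots,a_{r}n_{r})$ is independent of $\mathbf n=(n_{1},\dots,n_{r})\in N^{r}$. For a tuple $\mathbf b\in G^{r}$ write $w(\mathbf bN)=\{\,w(b_{1}n_{1},\dots,b_{r}n_{r})\mid n_{1},\dots,n_{r}\in N\,\}$. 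As $\mathbf b$ runs over representatives of the finitely many cosets of $N^{r}$ in $G^{r}$, the sets $w(\mathbf bN)$ cover $G_{w}$, so it suffices to show that each $w(\mathbf bN)$ is finite; then $G_{w}$ is finite and a fortiori so is $w(G)$. In other words, the task is reduced to a statement about the parametrised words $\mathbf n\mapsto w(b_{1}n_{1},\dots,b_{r}n_{r})$.

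The main work is then to propagate the information from the single box $\mathbf aN^{r}$, on which $w$ is constant, to all boxes, using nilpotency; I would organise this by induction on the nilpotency class $c$ of $G$, via the central quotient $G/\gamma_{c}(G)$, which has smaller class and to which the statement already applies. Collecting $w(b_{1}n_{1},\dots,b_{r}n_{r})$ in the nilpotent group $G$ writes it as $w(\mathbf b)$ times a product of values of iterated commutator subwords in the entries $b_{i}$ and $n_{i}$, ordered by weight. Specialising the distinguished box $\mathbf a$ and using constancy there -- together with the inductive hypothesis applied to the smaller-class quotient -- forces $N$ to satisfy enough commutator laws that, for an arbitrary box $\mathbf b$, the correction product collapses to a product of boundedly many terms each of which is a homomorphic image of $N$ in $G$. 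Such an image is a closed subgroup lying in a product of boundedly many copies of $G_{w}$ and $G_{w}^{-1}$, hence of cardinality $<2^{\aleph_0}$, hence finite, since an infinite profinite group has at least $2^{\aleph_0}$ elements. Therefore $w(\mathbf bN)$ lies in a single coset times finitely many finite subgroups, so it is finite, and the proof is complete.

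The hard part will be making the collection bookkeeping work uniformly for an \emph{arbitrary} word $w$: for multilinear commutator words and for small power words one can list the correction terms explicitly and recognise them as word values, but in general one needs a class of parametrised words closed under the derivations produced by collection, and must run two inductions at once -- on the nilpotency class and on the complexity of the parametrised word -- with Proposition~\ref{pro:instead-of-Baire} supplying both the base case (``fewer than $2^{\aleph_0}$ values implies finitely many values'') and the constancy that triggers the collapse. It is also worth stressing that a naive counting argument is not available: since $2^{\aleph_0}$ need not be regular, a union of fewer than $2^{\aleph_0}$ small sets can have size $2^{\aleph_0}$, and it is exactly Proposition~\ref{pro:instead-of-Baire} that sidesteps this difficulty.
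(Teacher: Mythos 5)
Your overall strategy -- reduce to nilpotent pro-$p$ groups, cover $G^r$ by boxes $\mathbf{b}N^r$, use Proposition~\ref{pro:instead-of-Baire} to get constancy on one box, and propagate via a collection process with a double induction on nilpotency class and on the complexity of the resulting parametrised words -- is the same strategy the paper follows in Sections~\ref{sec:para-words} and~\ref{sec:nilp-groups}. But as written the proposal has three genuine gaps. First, your collection step claims that $w(b_1n_1,\dots,b_rn_r)$ equals $w(\mathbf{b})$ times a product of values of \emph{iterated commutator} subwords; this is false for a general word (take $w=x^m$: the correction contains the factor $n^m$, which is not a commutator value), and the paper must first reduce to the case where $w$ is a commutator word (Lemma~\ref{lem:red-to-comm}), a reduction that itself uses the strong conciseness of the auxiliary commutator word $(x_1x_2)^{-m}x_1^{\,m}x_2^{\,m}$ to turn $g\mapsto g^m$ into a homomorphism modulo a finite subgroup. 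Your sketch never addresses the non-commutator part of $w$.

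Second, the conclusion ``$G_w$ is finite and a fortiori so is $w(G)$'' is not a fortiori: $w(G)$ is the closed subgroup generated by $G_w$, and a finitely generated abelian pro-$p$ group (e.g.\ $\mathbb{Z}_p$) can be infinite, so finiteness of $G_w$ does not by itself yield finiteness of $w(G)$ -- this implication is essentially the conciseness problem. The paper handles this by \emph{first} passing to a quotient in which $w(G)$ is central of exponent $p$ (Corollary~\ref{cor:reduction-to-pro-p-central-exp-p}, which rests on Segal's finite-width theorem for finitely generated nilpotent groups together with Lemmas~\ref{lem:fin-conj-cl} and~\ref{lem:reduction-to-pro-p-central-exp-p}), and only then proves $G_w$ finite, so that $w(G)$ is a finitely generated elementary abelian group. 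Third, the mechanism you propose for finishing -- that the correction product ``collapses to boundedly many terms each of which is a homomorphic image of $N$ in $G$,'' hence a closed subgroup of size less than $2^{\aleph_0}$, hence finite -- is only valid at the very bottom of the recursion, where the parametrised word has degree one and genuinely defines a homomorphism (Remark~\ref{rem:good-para-words}). At intermediate stages the corrections are themselves parametrised words of smaller length whose value sets are merely of cardinality less than $2^{\aleph_0}$, and one must re-apply Proposition~\ref{pro:instead-of-Baire} and the recursive hypothesis to each of them (this is exactly the content of Lemma~\ref{lem:identity-for-U} and Proposition~\ref{pro:good-words-OK}). You correctly identify in your last paragraph that this bookkeeping is the hard part, but identifying it is not the same as carrying it out: the well-founded length function on $\mathfrak{E}$-products and the verification that the class of friendly products is closed under the operations produced by collection (Lemmas~\ref{lem:write-as-elem-prod}--\ref{rem:gamma_c}) constitute the substance of the proof and are missing here.
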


A straightforward and well-known argument shows that every group word
is strongly concise in the class of abelian profinite groups; compare
Proposition~\ref{pro:abelian-OK}.  But strong conciseness does not
behave well under group extensions;
Theorem~\ref{thm:w-str-concise-in-nilp-gp} and, more importantly, the
considerations that enter into its proof are new, even for nilpotent
groups of class~$2$.

  The following corollaries can be derived from
Theorem~\ref{thm:mcw-more} and
Theorem~\ref{thm:w-str-concise-in-nilp-gp} without further difficulty.

\begin{cor} \label{cor:F-mod-w(F)-nilp-OK} Let $F$ be a free group of
  countably infinite rank and let $w$ be a group word such that
  $F/w(F)$ is nilpotent.  Then $w$ is strongly concise.
\end{cor}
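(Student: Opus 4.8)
The plan is to split $w(G)$ into a lower central term -- which is governed by Theorem~\ref{thm:mcw-more}, since it is a multilinear commutator word -- and a nilpotent quotient -- which is governed by Theorem~\ref{thm:w-str-concise-in-nilp-gp}. Let $c$ denote the nilpotency class of $F/w(F)$, so that $\gamma_{c+1}(F) \le w(F)$. As $w(F)$ is fully invariant, this containment is equivalent to the assertion that the left-normed commutator $[x_1,\ldots,x_{c+1}]$ lies in the (abstract) subgroup of $F$ generated by the $w$-values; being a relation in the free group of countably infinite rank, it then holds in every group. Thus, on the one hand, $\gamma_{c+1}(G) \le w(G)$ for every (topological) group $G$; and, on the other hand, substituting $1$ for the superfluous free generators, there is a fixed expression
\[
[x_1,\ldots,x_{c+1}] = w(f_{11},\ldots,f_{1\kr})^{\varepsilon_1} \cdots w(f_{N1},\ldots,f_{N\kr})^{\varepsilon_N}
\]
with $\varepsilon_i = \pm 1$ and each $f_{ij}$ a word in $x_1,\ldots,x_{c+1}$ only. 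Now fix a profinite group $G$ with $\lvert G_w \rvert < 2^{\aleph_0}$; the task is to show that $w(G)$ is finite.

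Evaluating the displayed identity in $G$ shows that $G_{\gamma_{c+1}} \subseteq (G_w \cup G_w^{-1})^N$, a subset of $G$ of cardinality at most $(2\lvert G_w\rvert)^N < 2^{\aleph_0}$. Since $\gamma_{c+1} = [x_1,\ldots,x_{c+1}]$ is a multilinear commutator word, Theorem~\ref{thm:mcw-more} applies to it and yields that $\gamma_{c+1}(G)$ is finite.

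It remains to control $w(G)$ modulo $\gamma_{c+1}(G)$. The quotient $\overline{G} = G/\gamma_{c+1}(G)$ is a profinite group (the subgroup $\gamma_{c+1}(G)$ being finite, hence closed and normal) which is nilpotent of class at most~$c$, and its set of $w$-values, being the image of $G_w$, still has cardinality less than $2^{\aleph_0}$. By Theorem~\ref{thm:w-str-concise-in-nilp-gp} the subgroup $w(\overline{G})$ is finite; and since $\gamma_{c+1}(G) \le w(G)$ and the quotient map $G \to \overline{G}$ carries closed sets to closed sets, $w(\overline{G})$ coincides with $w(G)/\gamma_{c+1}(G)$. Therefore $w(G)$ is an extension of the finite group $\gamma_{c+1}(G)$ by the finite group $w(G)/\gamma_{c+1}(G)$, and so is finite, as required.

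I expect no genuine obstacle here: all the substance sits inside Theorems~\ref{thm:mcw-more} and~\ref{thm:w-str-concise-in-nilp-gp}. The two points that need a moment's care are the passage from the verbal containment $\gamma_{c+1}(F) \le w(F)$ to a product expression with a \emph{bounded} number of factors -- this is exactly what allows the hypothesis $\lvert G_w\rvert < 2^{\aleph_0}$ to be inherited by $G_{\gamma_{c+1}}$ -- and the identification $w(\overline{G}) = w(G)/\gamma_{c+1}(G)$, which uses $\gamma_{c+1}(G) \le w(G)$ together with the fact that continuous surjections of profinite groups are closed maps.
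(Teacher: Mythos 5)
Your argument is correct and is essentially the paper's own proof: both express $\gamma_{c+1}=[x_1,\ldots,x_{c+1}]$ as a bounded product of $w$-values and their inverses to deduce $\lvert G_{\gamma_{c+1}}\rvert<2^{\aleph_0}$, invoke Theorem~\ref{thm:mcw-more} to make $\gamma_{c+1}(G)$ finite, and then pass to the nilpotent quotient $G/\gamma_{c+1}(G)$ and apply Theorem~\ref{thm:w-str-concise-in-nilp-gp}. Your write-up merely makes explicit the substitution of $1$ for superfluous generators and the identification $w(\overline{G})=w(G)/\gamma_{c+1}(G)$, which the paper leaves implicit.
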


\begin{cor} \label{cor:specific-words} The following group words $w$
  are strongly concise:
  \[
  x^2, \quad x^3, \quad x^6, \quad [x,y,y] \qquad \text{and}
  \]
  \[
  [x^2,z_1, \ldots, z_r], \quad [x^3,z_1, \ldots, z_r], \quad
  [x,y,y,z_1, \ldots, z_r] \qquad \text{for $r \ge 1$,}
  \]
  where $x,y,z_1, z_2, \ldots$ are independent variables.
\end{cor}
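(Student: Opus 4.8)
The plan is to obtain every word in the list except $x^6$ as a direct consequence of Corollary~\ref{cor:F-mod-w(F)-nilp-OK}, by checking in each case that $F/w(F)$ is nilpotent, where $F$ is a free group of countably infinite rank; the word $x^6$ will then be deduced from the cases $x^2$ and $x^3$, and this last step is where I expect the genuine difficulty to lie.

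For the three base words the verification is immediate. The quotient $F/x^2(F)$ is an elementary abelian $2$-group, hence nilpotent. The quotient $F/x^3(F)$ is nilpotent because, by the Levi--van der Waerden theorem, groups of exponent $3$ have nilpotency class at most~$3$. The quotient $F/[x,y,y](F)$ is nilpotent because, by Levi's theorem, $2$-Engel groups have nilpotency class at most~$3$. In all three cases Corollary~\ref{cor:F-mod-w(F)-nilp-OK} applies and shows that $x^2$, $x^3$ and $[x,y,y]$ are strongly concise.

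For the three infinite families, fix a base word $v\in\{x^2,\,x^3,\,[x,y,y]\}$ and put $w=[v,z_1,\ldots,z_r]$ with $r\ge 1$. Since conjugation is an automorphism of $F$, the set $F_v$ of $v$-values is closed under conjugation, so $v(F)=\langle F_v\rangle$ is normal; a short induction on $r$ --- expanding $[ab,c]=[a,c]^b[b,c]$ and using at each step that a conjugate of a value of $[v,z_1,\ldots,z_i]$ is again a value of that word --- then shows that
\[
w(F)=[v(F),\underbrace{F,\ldots,F}_{r}].
\]
By the preceding paragraph $F/v(F)$ is nilpotent, of class $c\le 3$, so $\gamma_{c+1}(F)\le v(F)$ and hence
\[
\gamma_{c+1+r}(F)=[\gamma_{c+1}(F),\underbrace{F,\ldots,F}_{r}]\le[v(F),\underbrace{F,\ldots,F}_{r}]=w(F).
\]
Thus $F/w(F)$ is nilpotent, and Corollary~\ref{cor:F-mod-w(F)-nilp-OK} again applies, showing that $[x^2,z_1,\ldots,z_r]$, $[x^3,z_1,\ldots,z_r]$ and $[x,y,y,z_1,\ldots,z_r]$ are strongly concise for every $r\ge 1$.

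Finally, $x^6$ is not covered by this scheme: the free Burnside group $F/x^6(F)$ of exponent~$6$ is locally finite, by M.~Hall's theorem, but it has $S_3$ as a quotient and so is not nilpotent. Here the plan is to bootstrap from the cases $x^2$ and $x^3$, now in hand, using Theorems~\ref{thm:mcw-more} and~\ref{thm:w-str-concise-in-nilp-gp} together with the classical structure theory of groups of exponent~$6$ (namely that they are soluble of bounded derived length and have nilpotent derived subgroup, by M.~Hall). Let $G$ be profinite with $\lvert G_{x^6}\rvert<2^{\aleph_0}$. As $G/x^6(G)$ has exponent~$6$ it is soluble of some bounded derived length $d$, so $G^{(d)}\le x^6(G)$; now $G^{(d)}=\delta_d(G)$ for the $d$-th derived word $\delta_d$, which is a multilinear commutator word, and since $\delta_d$ is a fixed element of $x^6(F)$ it is a fixed product of finitely many $x^6$-values there, whence every $\delta_d$-value in $G$ is a product of boundedly many elements of $G_{x^6}\cup G_{x^6}^{-1}$; therefore $\lvert G_{\delta_d}\rvert<2^{\aleph_0}$, and Theorem~\ref{thm:mcw-more} forces $G^{(d)}$ to be finite. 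Replacing $G$ by $G/G^{(d)}$ we may assume $G$ soluble, and one then works down the derived series, at each stage factoring out a finite verbal subgroup: the containment $G'_{x^6}\subseteq G_{x^6}$ keeps the cardinality hypothesis alive on passing to $G'$, the conciseness of $x^2$ in abelian profinite groups disposes of the bottom of the series, and on the intermediate layers --- where, by the structure theory, one is dealing with (nilpotent)-by-abelian sections of exponent~$6$ --- Theorem~\ref{thm:w-str-concise-in-nilp-gp} and the strong conciseness of $x^2$ and $x^3$ apply. Arranging this descent so that at every step the relevant set of verbal values still has cardinality less than $2^{\aleph_0}$ is the main obstacle; granting this, one concludes that $x^6(G)$ is finite, i.e.\ that $x^6$ is strongly concise.
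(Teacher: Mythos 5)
Your treatment of every word except $x^6$ is correct and is essentially the paper's own argument: verify that $F/w(F)$ is nilpotent (elementary abelian for $x^2$, Levi--van der Waerden for $x^3$, Levi for $[x,y,y]$, and the identity $w(F)=[v(F),F,\ldots,F]$ together with $\gamma_{c+1}(F)\le v(F)$ for the three families) and invoke Corollary~\ref{cor:F-mod-w(F)-nilp-OK}. That part needs no changes.

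The $x^6$ case, however, contains a genuine gap, and you have named it yourself: ``arranging this descent so that at every step the relevant set of verbal values still has cardinality less than $2^{\aleph_0}$'' is not a technical afterthought but the entire difficulty. Your reduction to the soluble case via $\delta_d\in x^6(F)$ and Theorem~\ref{thm:mcw-more} is sound, but the subsequent plan -- working down the derived series and applying Theorem~\ref{thm:w-str-concise-in-nilp-gp} and the cases $x^2$, $x^3$ on ``(nilpotent)-by-abelian sections'' -- does not assemble into a proof: strong conciseness is not stable under extensions (the paper stresses this explicitly after Theorem~\ref{thm:w-str-concise-in-nilp-gp}), Theorem~\ref{thm:w-str-concise-in-nilp-gp} applies to nilpotent groups and not to nilpotent-by-abelian ones, and knowing that $x^6(H)$ is finite for each layer $H$ of a series gives no control on $x^6(G)$. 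The paper avoids all of this with Lemma~\ref{lem:red-to-prime-powers}: from $\lvert G_{x^6}\rvert<2^{\aleph_0}$ one gets periodicity (Lemma~\ref{lem:not-comm-periodic}), Herfort's theorem bounds the relevant primes, Dicman's Lemma absorbs the Sylow subgroups for primes not dividing $6$ into a finite normal subgroup, and on a Sylow pro-$2$ (resp.\ pro-$3$) subgroup the set of sixth powers coincides with the set of squares (resp.\ cubes) because cubing (resp.\ squaring) is bijective there; the already established cases $x^2$ and $x^3$ then finish the argument, with one more application of Dicman's Lemma. You should replace your descent through the derived series by this Sylow-theoretic reduction, or else supply the missing cardinality control, which at present is simply asserted to be the ``main obstacle'' and then granted.
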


Our proof of Theorem~\ref{thm:w-str-concise-in-nilp-gp} is based on
parametrised words; see Section~\ref{sec:para-words}.  Nilpotency is a
key ingredient for setting up induction parameters that help us to
reduce the complexity of the word $w$ as well as the complexity of the
group $G$ under consideration.

As a byproduct, our approach highlights the relevance of the following
two weaker versions of the Strong Conciseness Conjecture.

\begin{con} \label{con:w(G)-fin-gen} Suppose that the group word $w$
  has less than $2^{\aleph_0}$ values in a profinite group~$G$. Then
  $w(G)$ is generated by finitely many $w$-values.
\end{con}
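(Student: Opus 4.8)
The plan is to approach this conjecture --- which is in any case a formal consequence of the Strong Conciseness Conjecture, since a finite $w(G)$ is generated by its finite set $G_w$ of $w$-values --- by reducing it to a statement about finite quotients and then arguing by contradiction, manufacturing $2^{\aleph_0}$ distinct $w$-values out of a failure of the conclusion. The reduction I have in mind is the following compactness observation: $w(G)$ is topologically generated by finitely many $w$-values if and only if there is an integer $k$ such that $w(G/N) = w(G)N/N$ is generated by at most $k$ $w$-values for \emph{every} open normal subgroup $N$ of $G$. Indeed, for fixed $k$ the set of $k$-tuples in $(G^r)^k$ whose $w$-values topologically generate $w(G)$ is the intersection, over all such $N$, of a descending family of clopen subsets of $(G^r)^k$; if each member is nonempty, the finite intersection property produces a $k$-tuple of $w$-values generating $w(G)$.

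So suppose, for a contradiction, that no such bound $k$ exists. Since the minimal number of $w$-values needed to generate $w(G/N)$ is monotone under passing to smaller $N$, one can choose a descending chain of open normal subgroups $G = N_0 \ge N_1 \ge N_2 \ge \cdots$ along which this number tends to infinity; in particular $\lvert (G/N_k)_w \rvert \to \infty$, and $G_w$ maps onto $\varprojlim_k (G/N_k)_w$. The subtlety --- and it is exactly this subtlety that keeps the present statement a conjecture --- is that $\lvert (G/N_k)_w \rvert \to \infty$ does not by itself force $\lvert G_w \rvert \ge 2^{\aleph_0}$: for a countable inverse system of finite sets the limit is finite, countably infinite, or of cardinality $2^{\aleph_0}$, and the last alternative occurs only when the tree of the system contains a perfect subtree, not merely when its levels grow. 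What is needed, therefore, is a \emph{no-collapse} principle: the extra $w$-values that force $w(G/N_{k+1})$ to require more generators than $w(G/N_k)$ must be shown to lie in fibres of size at least $2$ over enough points of $(G/N_k)_w$, and the chain $(N_k)$ must be refinable so that this branching recurs cofinally.

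This last point is where the parametrised-word machinery of Section~\ref{sec:para-words} should be brought to bear: fixing all but one variable of $w$ at the group elements that witness the branching turns $w$ into parametrised words of lower complexity, and an induction on that complexity --- using Theorem~\ref{thm:mcw-more} for multilinear commutator words and Theorem~\ref{thm:w-str-concise-in-nilp-gp} in the nilpotent case, both as base cases and as structural reductions --- is intended to propagate the required non-collapse from the reduced words back to $w$. I expect this propagation to be the real obstacle: passing from the mere growth of $\lvert (G/N)_w \rvert$ to the branching of $G_w = \varprojlim_N (G/N)_w$ demands structural control over the value set of an \emph{arbitrary} word that is presently available only in the multilinear-commutator and nilpotent regimes, and any complete proof would apparently have to supply much the same input that the full Strong Conciseness Conjecture does.
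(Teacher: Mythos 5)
The statement you are trying to prove is labelled as a \emph{Conjecture} in the paper (Conjecture~\ref{con:w(G)-fin-gen}); the authors offer no proof of it, and indeed they present it precisely as an open weakening of the Strong Conciseness Conjecture whose relevance is illustrated by the conditional results of Section~\ref{sec:fin-many-w-values} (Theorem~\ref{thm:vitual-nilp-and-weakly-rational} and Propositions~\ref{pro:virt-nilp}, \ref{pro:weakly-rational}, which run in the opposite direction: they \emph{assume} finite generation by $w$-values and deduce finiteness of $w(G)$). So there is no proof in the paper to compare yours against, and your proposal does not supply one either.

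To be concrete about where your attempt stops being a proof: the opening compactness reduction is correct and standard --- $w(G)$ is topologically generated by $k$ $w$-values if and only if every finite quotient $w(G/N)$ is, because the relevant sets of $k$-tuples in $(G^r)^k$ are clopen, nonempty and form a directed descending family. You also correctly diagnose the obstruction: growth of $\lvert (G/N)_w\rvert$ along a chain of open normal subgroups does not force $\lvert G_w\rvert \ge 2^{\aleph_0}$, since an inverse limit of finite sets of unbounded size can be countable. But the ``no-collapse principle'' you then invoke --- that the new $w$-values forcing extra generators at each level can be arranged to branch cofinally, so that Proposition~\ref{pro:instead-of-Baire} (or a perfect-subtree argument) applies --- is not a lemma, it is the entire content of the conjecture. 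The parametrised-word machinery of Section~\ref{sec:para-words} that you propose to use for the propagation step is developed in the paper only under a nilpotency hypothesis (the length function of Remark~\ref{rem:E-ell-for-nilpotent} and Lemma~\ref{rem:gamma_c} require $\gamma_{c+1}(G)=1$ to terminate the induction), so it does not extend to an arbitrary profinite group $G$ and an arbitrary word $w$. Your own closing paragraph concedes this, and that concession is accurate: the argument has a genuine, named gap and the statement remains open.
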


\begin{con}
  Suppose that the group word $w$ has less than $2^{\aleph_0}$ values
  in a profinite group~$G$.  Then there is an open subgroup $H$ of $G$
  such that $w(H) = 1$.
\end{con}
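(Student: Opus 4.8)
The plan is to deduce the statement from the finiteness of the verbal subgroup whenever the latter is available, and otherwise to propagate to an open subgroup the algebraic information that the hypothesis $\lvert G_w\rvert<2^{\aleph_0}$ provides via Proposition~\ref{pro:instead-of-Baire}. The starting point is an elementary reduction: \emph{if $w(G)$ is finite, then the conclusion holds}. Indeed, $w(G)$ is then a finite normal subgroup of $G$, and by separating its finitely many nontrivial elements from the identity with open normal subgroups of $G$ and intersecting these, one obtains an open normal $K\trianglelefteq G$ with $K\cap w(G)=1$; since $K$ is a closed subgroup $w(K)\le K$, and since $K\le G$ also $w(K)\le w(G)$, so $w(K)\le K\cap w(G)=1$ while $K$ is open. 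Together with Theorem~\ref{thm:mcw-more} this settles the statement for every multilinear commutator word, and together with Theorem~\ref{thm:w-str-concise-in-nilp-gp} it settles the statement for every word $w$ when $G$ is nilpotent.

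For a general word and a general profinite group $G$ the intended route runs as follows. Proposition~\ref{pro:instead-of-Baire} should convert $\lvert G_w\rvert<2^{\aleph_0}$ into a genuine structural statement -- some fibre of the word map $w\colon G^{r}\to G$ has nonempty interior -- so that there are $g_1,\dots,g_r\in G$, an open subgroup $N\le G$ and a $w$-value $v$ with $w(g_1n_1,\dots,g_rn_r)=v$ for all $n_1,\dots,n_r\in N$. Expanding this coset equation by means of the parametrised-word calculus of Section~\ref{sec:para-words} yields simpler identities valid inside $N$, and inside the (still open) subgroups obtained from $N$ by adjoining finitely many $w$-values and by forming terms of the lower central and derived series. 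As in the proof of Theorem~\ref{thm:w-str-concise-in-nilp-gp}, one would then organise these identities through an induction on the complexity of $w$ combined with a nilpotency class / derived length parameter, with the goal of exhibiting an open nilpotent subgroup $M\le G$. Theorem~\ref{thm:w-str-concise-in-nilp-gp} then makes $w(M)$ finite, and the elementary reduction applied to $M$ in place of $G$ produces an open $K\le M$, hence open in $G$, with $w(K)=1$.

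The main obstacle is the step that upgrades a single coset equation $w(g_1N,\dots,g_rN)=v$ to control over a nilpotent open subgroup when $G$ is not virtually nilpotent: this equation constrains only one fibre of the word map, and nothing visibly forces the remaining $w$-values to be few -- which is exactly the difficulty underlying the still-open conciseness problem for residually finite groups. This is why Conjecture~\ref{con:w(G)-fin-gen} is relevant here: a plausible strategy is to prove the finite-generation statement and the present one together by a simultaneous induction in which the parametrised-word reduction strictly decreases the complexity of $w$ at each step. Without such additional input I would expect the statement to remain open for words of higher complexity, such as general power words and higher Engel words, over profinite groups that are far from nilpotent.
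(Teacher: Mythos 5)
The statement you were asked to prove is not proved in the paper at all: it appears there as one of two open conjectures (weaker versions of the Strong Conciseness Conjecture) that the authors extract from their methods, so there is no proof of it to compare yours against. Your elementary reduction is correct and worth recording: if $w(G)$ is finite, then intersecting finitely many open normal subgroups of $G$ that separate the nontrivial elements of $w(G)$ from the identity yields an open normal $K\trianglelefteq G$ with $K\cap w(G)=1$, whence $w(K)\le K\cap w(G)=1$. Combined with Theorem~\ref{thm:mcw-more} and Theorem~\ref{thm:w-str-concise-in-nilp-gp}, this settles the conjecture exactly in the cases the paper's results cover: multilinear commutator words over arbitrary profinite groups, and arbitrary words over nilpotent profinite groups.

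The gap in your proposed route for the general case is the one you yourself identify, and it is genuine. Proposition~\ref{pro:instead-of-Baire} only gives constancy of the word map on a single coset $g_1N\times\cdots\times g_rN$, and without nilpotency (or the multilinear structure exploited via Corollaries~\ref{cor:uno} and~\ref{cor:M}) the paper contains no mechanism for converting one constant fibre into the vanishing of $w$ on an open subgroup. In particular, the parametrised-word calculus of Section~\ref{sec:para-words} cannot be invoked to ``exhibit an open nilpotent subgroup'' of an arbitrary profinite group: the set $\mathfrak{E}$ and the length function $\ell$ of Remark~\ref{rem:E-ell-for-nilpotent} are defined only relative to a finite nilpotency class $c$, so the whole machinery presupposes the nilpotency you would need to produce. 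Your closing assessment -- that the statement remains open for general words over profinite groups far from nilpotent -- is the correct one and agrees with the paper, which offers the statement precisely as a conjecture rather than a theorem.
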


To illustrate the relevance of Conjecture~\ref{con:w(G)-fin-gen}, we
summarise some conditional results that we obtained.  For this we
recall that if a group word $w$ `implies virtual nilpotency', then for
a large class of groups~$G$, including all finitely generated
residually finite groups, $w(G) = 1$ implies that $G$ is
nilpotent-by-finite, due to results of Burns and
Medvedev~\cite{BuMe03}.  Furthermore, following~\cite{GuSh15} we say
that a group word $w$ is `weakly rational' if for every finite group
$G$ and for every positive integer $e$ with
$\gcd(e,\lvert G \rvert) = 1$, the set $G_w$ is closed under taking
$e$th powers of its elements.  We refer to
Section~\ref{sec:fin-many-w-values} for a more detailed discussion of
these notions.

\begin{thm} \label{thm:vitual-nilp-and-weakly-rational} Let $w$ be a
   group word that \textup{(i)} implies virtual
  nilpotency or \textup{(ii)} is weakly rational.  Let $G$ be a
  profinite group such that $\lvert G_w \rvert < 2^{\aleph_0}$.  If
  $w(G)$ is generated by finitely many $w$-values, then $w(G)$ is
  finite.
\end{thm}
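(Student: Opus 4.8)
The plan is first to reduce, for an arbitrary word $w$, to a statement about the exponent of $w(G)$, and only then to use hypothesis (i) or (ii). Since $w(G)$ is by assumption topologically generated by finitely many $w$-values $v_1,\dots,v_k$, in every finite continuous quotient $\bar G=G/M$ the verbal subgroup $w(\bar G)=w(G)M/M$ is generated by the images of $v_1,\dots,v_k$, and $w(G)=\varprojlim_M w(\bar G)$. Hence $w(G)$ is finite precisely when the orders $\lvert w(\bar G)\rvert$ are bounded independently of $M$; and, since each $w(\bar G)$ is $k$-generated, the solution of the restricted Burnside problem makes this equivalent to $w(G)$ having finite exponent. (It is also convenient to note that a topologically $k$-generated profinite group is either finite or of cardinality $2^{\aleph_0}$, because an infinite profinite group, being a non-discrete topological group, has no isolated points.) So the task becomes: under (i) or (ii), the bound $\lvert G_w\rvert<2^{\aleph_0}$ forces $w(G)$ to have finite exponent.

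Next I would invoke Proposition~\ref{pro:instead-of-Baire} to obtain an open normal subgroup $N\trianglelefteq G$ and a tuple $g=(g_1,\dots,g_r)$ on whose ``tube'' $g_1N\times\dots\times g_rN$ the map $w$ is constant, with value $v_0$ say. Expanding $w(\dots,g_in,\dots)$ in terms of $w(\dots,g_i,\dots)$ and conjugates of $n$ (the Fox-type partial derivatives of $w$), the constancy of $w$ on the tube turns into strong restrictions on the action of the $g_i$ on $N$ and on the values taken inside $N$ by $w$ and by the parametrised words derived from it; this is where the machinery of Section~\ref{sec:para-words} enters.

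For case (i) I would apply the profinite form of the Burns--Medvedev theorem \cite{BuMe03} to $G/w(G)$, which satisfies the law $w\equiv 1$: there are $c=c(w)$, $e=e(w)$ and an open subgroup $H$ with $w(G)\le H\trianglelefteq G$, $[G:H]\le e$ and $H/w(G)$ nilpotent of class $\le c$, so $\gamma_{c+1}(H)\le w(G)\le H$. Writing a general $w$-value of $G$ as $w(t_1k_1,\dots,t_rk_r)$ with the $t_i$ running over a bounded transversal of $H$ and $k_i\in H$ exhibits $w(G)$ as generated by the values of finitely many parametrised words on $H$, a group that is nilpotent modulo the $k$-generated subgroup $w(G)$. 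An induction on $c$ --- with base case $c=0$ handled by Theorem~\ref{thm:w-str-concise-in-nilp-gp} applied to $H$ --- together with the tube should then bound the exponent of $w(G)$, the nilpotency serving as the induction parameter exactly as in the proof of Theorem~\ref{thm:w-str-concise-in-nilp-gp}. For case (ii) I would use instead that, $w$ being weakly rational, in every finite quotient $\bar G$ the full set of generators of $\langle\bar v\rangle$ lies in $\bar G_w$ for each $w$-value $\bar v$ (using that $(\mathbb{Z}/\lvert\bar G\rvert)^{\ast}$ surjects onto $(\mathbb{Z}/\mathrm{ord}(\bar v))^{\ast}$); so if $w(G)$ had infinite exponent one would, with the help of the tube and the finite generation of $w(G)$, locate a $w$-value of infinite order, whereupon the infinite procyclic subgroup it generates has $2^{\aleph_0}$ topological generators, all of them $w$-values since $G_w$ is closed --- contradicting $\lvert G_w\rvert<2^{\aleph_0}$.

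The main obstacles are the two ``de-finitisation'' steps. The more delicate is showing that infinite exponent of $w(G)$ is genuinely incompatible with $\lvert G_w\rvert<2^{\aleph_0}$: in case (ii) this hinges on producing a single $w$-value of infinite order, which is not automatic (the finitely many generating $w$-values might all be torsion even when they generate an infinite profinite group); in case (i) it hinges on making the induction on nilpotency class cooperate with the bounded-index passage coming from Burns--Medvedev. In both, it is the parametrised-word calculus together with the tube furnished by Proposition~\ref{pro:instead-of-Baire} that must carry the argument.
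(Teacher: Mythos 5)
Your proposal assembles the right raw materials for both cases (Burns--Medvedev for (i), the count of topological generators of an infinite procyclic group for (ii)), but the two obstacles you flag at the end are genuine gaps, and both are dissolved by one reduction you never make. Since $w(G)=\overline{\langle g_1,\dots,g_k\rangle}$ with each $g_i\in G_w$, Lemma~\ref{lem:fin-conj-cl} shows each conjugacy class $\{g_i^{\,g}\mid g\in G\}$ is finite, so each $\mathrm{C}_G(g_i)$ is open, hence $\mathrm{C}_G(w(G))$ is open and $G/\mathrm{C}_G(w(G))$ is finite; Schur's theorem then makes $w(G)'$ finite (this is Lemma~\ref{lem:fin-ab}). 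Passing to $G/w(G)'$ you may assume $w(G)$ is \emph{abelian}, and this is the step that makes everything close. In case (ii) it removes your worry about locating a $w$-value of infinite order: an abelian profinite group topologically generated by finitely many torsion elements is finite, so it suffices to show every $w$-value $h$ is torsion --- and your generator-counting argument for $\langle h\rangle$ does exactly that, once made rigorous by observing that weak rationality puts every generator $g$ of $\langle h\rangle$ into $G_wN$ for each open normal $N$, whence $g\in\bigcap_N G_wN=G_w$ because $G_w$ is closed. Your detour through the restricted Burnside problem and ``finite exponent'' is then unnecessary.

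In case (i) the gap is more serious: ``an induction on $c$ together with the tube should then bound the exponent'' is not an argument, and it is unclear how the parametrised-word calculus of Section~\ref{sec:para-words} --- which is designed for nilpotent $G$ --- would interact with the bounded-index subgroup coming from Burns--Medvedev when only $G/w(G)$ is nilpotent-by-finite. The paper's route is much shorter and uses a tool you do not mention: after the abelian reduction one may replace $G$ by a finitely generated subgroup realising $w(G)$; then $G$ is abelian-by-nilpotent-by-finite, and by \cite[Theorem~4.1.5]{Se09} every word has finite width in such a finitely generated group. Hence $w(G)$ coincides with the set of products of boundedly many $w$-values and their inverses, so $\lvert w(G)\rvert<2^{\aleph_0}$, and a profinite group of cardinality less than $2^{\aleph_0}$ is finite. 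Neither Proposition~\ref{pro:instead-of-Baire} nor any ``tube'' is needed in either case.
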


\smallskip

\noindent \emph{Notation and Organisation.}  Our notation is mostly
standard.  All repeated commutators are left-normed, e.g.\
$\gamma_3(x,y,z) = [x,y,z] = [[x,y],z]$.

In Section~\ref{sec:preliminaries} we collect some known results and
several basic observations; the elementary
Proposition~\ref{pro:instead-of-Baire} is one of the early key
insights.  In Section~\ref{sec:mcw} we prove that multilinear
commutator words are strongly concise.  The main results in
Section~\ref{sec:fin-many-w-values} are
Propositions~\ref{pro:delta-k-q-result}, \ref{pro:virt-nilp}
and~\ref{pro:weakly-rational}; in particular, the latter two yield
Theorem~\ref{thm:vitual-nilp-and-weakly-rational}.  In
Section~\ref{sec:para-words} we set up the reduction arguments based
on parametrised words. In Section~\ref{sec:nilp-groups} we prove
Theorem~\ref{thm:w-str-concise-in-nilp-gp} as well as
Corollaries~\ref{cor:F-mod-w(F)-nilp-OK} and~\ref{cor:specific-words}.


\section{Preliminaries} \label{sec:preliminaries}

In this section we collect some known results as well as several
straightforward consequences and basic observations.

For simplicity and to steer clear of the Continuum
Hypothesis (or Martin's Axiom), we record the following proposition
that helps us to avoid references to the Baire category theorem,
which appear frequently in~\cite{DeMoSh16} and related
  articles.

\begin{prop} \label{pro:instead-of-Baire} Let $\varphi \colon X \to Y$
  be a continuous map between non-empty profinite spaces that is
  nowhere locally constant, i.e.\ there exists no non-empty open
  subset $U \subseteq_\mathrm{o} X$ such that $\varphi \vert_U$ is
  constant.  Then 
  $\lvert X \varphi \rvert \ge 2^{\aleph_0}$.
\end{prop}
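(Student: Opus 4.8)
The plan is to build a \emph{Cantor scheme} inside $X$: a family of non-empty clopen subsets $U_s$ indexed by finite binary strings $s$, nested according to the tree order, and arranged so that $\varphi$ separates the two children of every node. Such a scheme embeds the $2^{\aleph_0}$ infinite binary sequences into the image $X\varphi$, which is exactly what is wanted. The first thing to note is that the hypothesis passes to non-empty open subsets: if $\emptyset \ne U \subseteq_\mathrm{o} X$ and $V$ is open in $U$, then $V$ is open in $X$, so $\varphi\vert_V$ cannot be constant; in particular $\varphi\vert_U$ is nowhere locally constant, and since $U \ne \emptyset$ it is non-constant.

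The engine of the construction is a splitting step. Let $U$ be a non-empty clopen subset of $X$ on which $\varphi$ is non-constant, and pick $a,b \in U$ with $\varphi(a) \ne \varphi(b)$. Since $Y$ is a profinite space, it has a basis of clopen sets, so there is a clopen $C \subseteq Y$ with $\varphi(a) \in C$ and $\varphi(b) \notin C$. Then $V := U \cap \varphi^{-1}(C)$ and $W := U \cap \varphi^{-1}(Y \setminus C)$ are non-empty (they contain $a$, respectively $b$), clopen in $U$ and hence clopen in $X$, disjoint, and satisfy $\varphi(V) \subseteq C$ while $\varphi(W) \cap C = \emptyset$, so $\varphi(V) \cap \varphi(W) = \emptyset$. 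By the observation above, $\varphi$ remains non-constant on each of $V$ and $W$.

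Now I recurse. Set $U_{\emptyset} := X$; given a non-empty clopen $U_s$ on which $\varphi$ is non-constant, apply the splitting step to obtain disjoint non-empty clopen subsets $U_{s0}, U_{s1} \subseteq U_s$ with $\varphi(U_{s0}) \cap \varphi(U_{s1}) = \emptyset$ and $\varphi$ non-constant on each. For an infinite binary sequence $\sigma$, the sets $U_{\sigma\vert n}$ with $n \ge 0$ form a descending chain of non-empty closed subsets of the compact space $X$, so by the finite intersection property I may choose a point $x_\sigma \in \bigcap_{n\ge 0} U_{\sigma\vert n}$. If $\sigma \ne \tau$ and $n$ is least with $\sigma(n) \ne \tau(n)$, then $x_\sigma \in U_{(\sigma\vert n)\sigma(n)}$ whereas $x_\tau \in U_{(\sigma\vert n)\tau(n)}$, and these two clopen sets have disjoint $\varphi$-images; hence $\varphi(x_\sigma) \ne \varphi(x_\tau)$. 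Thus $\sigma \mapsto \varphi(x_\sigma)$ is an injection from the set of infinite binary sequences into $X\varphi$, and therefore $\lvert X\varphi \rvert \ge 2^{\aleph_0}$.

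I do not anticipate a genuine obstacle: this is the standard perfect-set style argument, and profiniteness is used only through the facts that $X$ is compact (for the nested intersections) and $Y$ has a clopen basis (for the separation in the splitting step). The one point that deserves care — and the reason for isolating the opening observation — is to verify that being non-constant, indeed nowhere locally constant, survives passage to the clopen pieces, so that the splitting step can be iterated without end; the rest is bookkeeping with the binary tree.
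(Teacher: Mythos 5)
Your proof is correct and follows essentially the same route as the paper: both arguments build a binary tree of nested non-empty clopen subsets whose siblings have disjoint $\varphi$-images (the paper separates via a finite discrete quotient of $Y$, you via a single clopen subset of $Y$, which amounts to the same thing) and then use compactness to extract $2^{\aleph_0}$ points with pairwise distinct images.
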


\begin{proof}
  For every non-empty closed open subset $U \subseteq X$ choose a
  continuous map $\vartheta_U \colon Y \to Z_U$ onto a finite discrete
  space $Z_U$ such that $\varphi \, \vartheta_U \colon X \to Z_U$ is
  not constant on~$U$.  Choose non-empty distinct fibers $U_1, U_2$ of
  the restriction of $\varphi \, \vartheta_U$ to~$U$; then
  $U_1, U_2 \subseteq U$ are non-empty closed open subsets of $X$ with
  $U_1 \varphi \cap U_2 \varphi = \varnothing$.

  Fix a non-empty closed open subset $A \subseteq X$,
    e.g.\ $A =X$.  For every sequence
    $ \mathbf{i} = (i_1,i_2, i_3,\ldots )$ in $\{1,2\}$, the
    consideration above yields a descending chain of non-empty closed
    open subsets
    $A_{i_1} \supseteq (A_{i_1})_{i_2}\supseteq
    ((A_{i_1})_{i_2})_{i_3}\supseteq \ldots$, and we set 
  \[
  A_\mathbf{i} = \bigcap\nolimits_{n \in \mathbb{N}} (
  \cdot\!\cdot\!\cdot ( (A_{i_1})_{i_2})_{i_3} \cdots )_{i_n}
  \subseteq_\mathrm{c} X.
  \]
  Since $X$ is compact, each $A_\mathbf{i}$ is non-empty, and
   we choose $a_\mathbf{i} \in A_\mathbf{i}$.  By
  construction we have $a_\mathbf{i} \varphi \ne a_\mathbf{j} \varphi$ for
  $\mathbf{i} \ne \mathbf{j}$.  Hence
  \[
  B = \big\{ a_\mathbf{i} \mid \mathbf{i} \in \{1,2\}^\mathbb{N}
  \big\} \subseteq X
  \]
  is mapped injectively into $Y$ under $\varphi$,
  and~$\lvert X \varphi \rvert \ge \lvert B \varphi \rvert = 2^{\aleph_0}$. 
\end{proof}

\begin{lemma} \label{lem:fin-conj-cl} Let $G$ be a profinite group and
  let $x \in G$.  If the conjugacy class $\{ x^g \mid g \in G \}$
  contains less than $2^{\aleph_0}$ elements, then it is finite.
\end{lemma}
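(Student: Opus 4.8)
The plan is to apply Proposition~\ref{pro:instead-of-Baire} to the conjugation map. Concretely, consider the continuous map $\varphi \colon G \to G$ defined by $g \mapsto x^g = g^{-1}xg$; here $G$ is a non-empty profinite space, so the hypotheses of the proposition are partly in place. The image $G\varphi$ is exactly the conjugacy class of $x$, so if this class has fewer than $2^{\aleph_0}$ elements, the contrapositive of Proposition~\ref{pro:instead-of-Baire} tells us that $\varphi$ must be locally constant somewhere: there is a non-empty open subset $U \subseteq_{\mathrm o} G$ on which $\varphi$ is constant.

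From local constancy I would extract finiteness of the conjugacy class by a standard coset argument. Since $G$ is profinite, $U$ contains a coset $Ng$ of some open normal subgroup $N \trianglelefteq_{\mathrm o} G$. The map $\varphi$ being constant on $Ng$ means $x^{ng} = x^{g}$ for all $n \in N$, equivalently $x^{n} = x$ for all $n \in N$, i.e.\ $N \le C_G(x)$. Hence the centraliser $C_G(x)$ is open in $G$, so it has finite index, and therefore the conjugacy class of $x$, being in bijection with $G/C_G(x)$, is finite.

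The only mild subtlety — and the step that requires the hypothesis that $G$ be profinite rather than merely compact Hausdorff — is the passage from an arbitrary non-empty open set $U$ to an open normal subgroup contained in a translate of it; this is immediate from the fact that open normal subgroups form a base of neighbourhoods of the identity in a profinite group. Everything else is routine, and I do not anticipate any real obstacle.
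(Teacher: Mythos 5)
Your argument is correct: the contrapositive of Proposition~\ref{pro:instead-of-Baire} applied to the continuous map $g \mapsto x^g$ gives a non-empty open set on which conjugation is constant, a translate of an open normal subgroup $N$ inside it forces $N \le \mathrm{C}_G(x)$, and openness of the centraliser yields finiteness of the class. This is in fact exactly the ``alternative'' proof the paper alludes to (adapting the Baire-category argument of \cite[Lemma~3.1]{DeMoSh16} with Proposition~\ref{pro:instead-of-Baire} in its place). The paper's primary argument is even shorter and bypasses Proposition~\ref{pro:instead-of-Baire} entirely: the conjugacy class is in bijection with the coset space $G/\mathrm{C}_G(x)$, which is a (homogeneous) profinite space, and an infinite profinite space automatically has cardinality at least $2^{\aleph_0}$. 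What your route buys is an explicit mechanism --- it exhibits the open centraliser directly from local constancy --- whereas the paper's route trades that for brevity by quoting a general fact about profinite spaces. Both are complete; no gap.
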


\begin{proof}
   The set $\{ x^g \mid g \in G \}$ is in bijection with
    the coset space $G/\mathrm{C}_G(x)$, a homogeneous profinite
    space.  Alternatively, one can adapt the proof
    of~\cite[Lemma~3.1]{DeMoSh16}, using
    Proposition~\ref{pro:instead-of-Baire} in place of the Baire
    category theorem. 
\end{proof}

\begin{prop} \label{pro:abelian-OK} Every group word is strongly
  concise in the class of abelian profinite groups.
\end{prop}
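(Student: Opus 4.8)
The plan is to reduce, via Proposition~\ref{pro:instead-of-Baire}, the hypothesis $\lvert G_w\rvert < 2^{\aleph_0}$ to the statement that $G_w$ is finite, and then to exploit commutativity to pass from the finiteness of $G_w$ to the finiteness of $w(G) = \overline{\langle G_w\rangle}$. Write $w = w(x_1,\dots,x_r)$ and consider the word map $\varphi = w_G \colon G^r \to G$, $(g_1,\dots,g_r)\mapsto w(g_1,\dots,g_r)$; this is a continuous map between profinite spaces. If $\varphi$ is nowhere locally constant, then Proposition~\ref{pro:instead-of-Baire} forces $\lvert G_w\rvert = \lvert G^r\varphi\rvert \ge 2^{\aleph_0}$, contrary to hypothesis. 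Hence there is a non-empty open subset $U \subseteq_{\mathrm o} G^r$ on which $\varphi$ is constant; shrinking $U$, we may assume $U = (g_1 N)\times\cdots\times(g_r N)$ for some open normal subgroup $N \trianglelefteq_{\mathrm o} G$ and some $g_i \in G$.

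Next I would use this to show $G_w$ is finite. The key observation is that in the abelian group $\overline{G}:=G/N$ the induced word map $w_{\overline G}$ is locally constant everywhere: indeed $G$ abelian implies $\overline G$ abelian, and for an abelian group the value $w(h_1,\dots,h_r)$ depends only on the exponent sum $\sigma = (\sigma_1,\dots,\sigma_r)$ of the variables in $w$, namely $w(h_1,\dots,h_r) = h_1^{\sigma_1}\cdots h_r^{\sigma_r}$; thus $\overline G{}_w = \{\,\bar h_1^{\sigma_1}\cdots \bar h_r^{\sigma_r} : \bar h_i \in \overline G\,\}$ is a subset of the finite group $\overline G$, so it is finite. (In fact one sees directly that $\overline G{}_w = \overline G{}^{\,d}$ where $d = \gcd(\sigma_1,\dots,\sigma_r)$, a subgroup of $\overline G$.) The point of producing the single tuple $(g_1,\dots,g_r)$ above is then almost superfluous for the abelian case, but it is the mechanism that in general localises the computation; here it already tells us that $G_w$ maps into the finite set $\overline G{}_w$ under $G \to \overline G$, and we must still control the $N$-part.

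Now the genuinely abelian argument: for $g,n_i \in G$ and arbitrary $h_i$,
\[
w(g_1 h_1, \dots, g_r h_r) = (g_1 h_1)^{\sigma_1}\cdots (g_r h_r)^{\sigma_r}
  = \big(g_1^{\sigma_1}\cdots g_r^{\sigma_r}\big)\,\big(h_1^{\sigma_1}\cdots h_r^{\sigma_r}\big) = w(g_1,\dots,g_r)\,w(h_1,\dots,h_r),
\]
so $w_G\colon G^r\to G$ is a continuous homomorphism of abelian groups when we give $G^r$ the product structure; equivalently $G_w = \{\,g_1^{\sigma_1}\cdots g_r^{\sigma_r}\,\} = G^{d}$ with $d=\gcd(\sigma_1,\dots,\sigma_r)$ is already the (closed) subgroup $w(G)$. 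Therefore $w(G) = G^{d}$, and we just need: if $\lvert G^{d}\rvert < 2^{\aleph_0}$ then $G^{d}$ is finite. But $G^{d}$ is itself a profinite abelian group, and a profinite group that is countably infinite (or of any cardinality strictly below $2^{\aleph_0}$) cannot be infinite, since an infinite profinite group, being a product over an infinite inverse system with infinitely many nontrivial stages, surjects onto an infinite product of finite groups and hence has cardinality at least $2^{\aleph_0}$. (Concretely: the identity map $G^{d}\to G^{d}$ is nowhere locally constant if $G^{d}$ is infinite, so Proposition~\ref{pro:instead-of-Baire} applies.) Hence $w(G) = G^{d}$ is finite, as required.

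The only step that needs slight care, and the one I expect to be the mild ``obstacle'', is the passage from $\lvert G_w \rvert < 2^{\aleph_0}$ to $G_w$ finite in a way that does not secretly invoke the Continuum Hypothesis: the honest route is exactly the one above — identify $G_w = w(G)$ as a closed (indeed verbal) subgroup using commutativity, observe it is profinite, and then apply Proposition~\ref{pro:instead-of-Baire} to the identity map of that subgroup, whose failure to be nowhere locally constant is precisely the statement that the subgroup has an open point, i.e.\ is finite. Everything else is the elementary identity $w(h_1,\dots,h_r)=h_1^{\sigma_1}\cdots h_r^{\sigma_r}$ in abelian groups and the structure theory of profinite abelian groups.
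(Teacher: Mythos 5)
Your proposal is correct and follows essentially the same route as the paper: reduce to the power word $x^d$ with $d=\gcd(\sigma_1,\dots,\sigma_r)$, observe that in an abelian group $g\mapsto g^d$ is a continuous homomorphism so that $G_w=G^d=w(G)$ is a closed subgroup, and conclude since an infinite profinite group has cardinality at least $2^{\aleph_0}$ (your application of Proposition~\ref{pro:instead-of-Baire} to the identity map is a clean way to see this). The opening paragraph locating a coset on which the word map is constant is, as you yourself note, superfluous here and can simply be deleted.
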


\begin{proof}
  Let $G$ be an abelian profinite group.  It is enough to consider
  power words $w(x) = x^n$, where $n \in \mathbb{N}$.  For these we
  observe that $w(G) = \{ g^n \mid g \in G \} = G_w$, as $G \to G$,
  $g \mapsto g^n$ is a homomorphism.  Hence $w(G)=G_w$ is finite or
  has cardinality at least~$2^{\aleph_0}$.
\end{proof}

\begin{lemma} \label{lem:not-comm-periodic} Let $w \in F$ be an
  element of a free group $F$ such that $w \not \in [F,F]$.  Let $G$
  be a profinite group such that $\lvert G_w \rvert < 2^{\aleph_0}$.
  Then $G$ is periodic.
\end{lemma}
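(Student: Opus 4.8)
The plan is to argue by contradiction, extracting uncountably many $w$-values from a single element of infinite order once we restrict attention to a procyclic subgroup. So suppose $G$ is not periodic and fix $g \in G$ of infinite order. Write $w = w(x_1,\ldots,x_{\kr})$, and for $1 \le j \le \kr$ let $a_j \in \Z$ be the exponent sum of $x_j$ in $w$, i.e.\ the $j$th coordinate of the image of $w$ under the abelianisation map $F \to F/[F,F]$. Since $w \notin [F,F]$ we may fix an index $j$ with $a := a_j \neq 0$. Let $C = \overline{\langle g \rangle} \le G$; this is a procyclic group, and it is infinite because $g$ has infinite order.

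Next I would consider the continuous map $\varphi \colon C \to G$ that sends $h \in C$ to $w(1, \ldots, 1, h, 1, \ldots, 1)$, with $h$ inserted in the $j$th argument. Since $C$ is abelian, evaluation of $w$ on elements of $C$ factors through $F/[F,F]$, so $h\varphi = h^{a}$ for all $h \in C$; in particular $C\varphi \subseteq G_w$. The only real content of the proof is to verify that $\varphi$ is nowhere locally constant in the sense of Proposition~\ref{pro:instead-of-Baire}. As the cosets of open subgroups form a basis for the topology of $C$, it suffices to show that $\varphi$ is not constant on $hN$ for any $h \in C$ and any open subgroup $N$ of $C$; since $C$ is abelian this amounts to finding $n \in N$ with $n^{a} \neq 1$. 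By Lagrange's theorem $n := g^{[C:N]} \in N$, and $n$ has infinite order because $g$ does and the index $[C:N]$ is finite; as $a \neq 0$ it follows that $n^{a} \neq 1$, as required.

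With this in hand, Proposition~\ref{pro:instead-of-Baire}, applied to $\varphi \colon C \to G$ between the non-empty profinite spaces $C$ and $G$, gives $\lvert C\varphi \rvert \ge 2^{\aleph_0}$, and hence $\lvert G_w \rvert \ge \lvert C\varphi \rvert \ge 2^{\aleph_0}$, contrary to hypothesis. Therefore $G$ is periodic. I do not expect any step to present a genuine obstacle: everything reduces to the choice of the map $\varphi$ and the elementary check of nowhere-local-constancy via the structure of procyclic groups. If one prefers to avoid Proposition~\ref{pro:instead-of-Baire}, one can instead observe directly that $C\varphi = \overline{\langle g^{a} \rangle}$ is an infinite closed subgroup of $G$ — infinite since $g^{a}$ has infinite order — and that an infinite procyclic profinite group has cardinality $2^{\aleph_0}$.
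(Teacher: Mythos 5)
Your proof is correct and follows essentially the same route as the paper: specialise $w$ to a power word on a procyclic subgroup and observe that the relevant power map on an infinite procyclic group has image of cardinality $2^{\aleph_0}$. The only cosmetic differences are that the paper substitutes $x_i = y^{f_i}$ with B\'ezout coefficients to realise the power word $y^{\gcd(e_1,\ldots,e_r)}$, rather than setting all variables but one equal to $1$, and that it then quotes Proposition~\ref{pro:abelian-OK} directly instead of re-verifying nowhere-local-constancy in order to apply Proposition~\ref{pro:instead-of-Baire}.
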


\begin{proof}
  Write $w(x_1,\ldots,x_{\kr}) = x_1^{\, e_1} \cdots x_{\kr}^{\, e_{\kr}} v$,
  where $e_1, \ldots, e_{\kr} \in \mathbb{Z}$ are not all zero and
  $v \in [F,F]$.  Then the word $y^m = w(y^{f_1},\ldots,y^{f_{\kr}})$,
  where
  $m = \sum_{i=1}^{\kr} e_i f_i = \gcd(e_1,\ldots,e_{\kr}) \in \mathbb{N}$,
  takes less than $2^{\aleph_0}$ values in~$G$.  By
  Proposition~\ref{pro:abelian-OK}, every procyclic subgroup of $G$ is
  finite, and thus $G$ is periodic.
\end{proof}

\section{Multilinear commutator words}\label{sec:mcw}

In this section we prove that every multilinear
  commutator word is strongly concise.  Recall that a multilinear
commutator word, also known as an outer-commutator word,
  is obtained  by nesting commutators and using each variable only
once.  Thus the word $[[x_1,x_2],[x_3,x_4,x_5],x_6]$ is a multilinear
commutator word  while the $3$-Engel word $[x,y,y,y]$ is
not.  An important family of multilinear commutator words
consists of  the repeated commutator words $\gamma_k$ 
on $k$ variables,  given by $\gamma_1=x_1$ and
$\gamma_k=[\gamma_{k-1},x_k]= [x_1,\ldots,x_k]$ for
  $k \ge 2$.  The verbal subgroup $\gamma_k(G)$ of a group $G$ is the
$k$th term of the lower central series of~$G$.  The
  derived words $\delta_k$, on $2^k$ variables, form another
  distinguished family of multilinear commutators; they are defined
  by $\delta_0=x_1$ and
$\delta_k = [\delta_{k-1}(x_1,\ldots,x_{2^{k-1}}),
\delta_{k-1}(x_{2^{k-1}+1}, \ldots,x_{2^k})].$
 The verbal subgroup  $\delta_k(G)=G^{(k)}$ is the
 $k$th  derived subgroup of $G$.

 Relying on the Baire category theorem, Detomi, Morigi and
 Shumyatsky~\cite{DeMoSh16} proved that, if $w$ is a multilinear
 commutator word, then for every profinite group $G$ the bound
 $\lvert G_w \rvert \le {\aleph_0}$ implies that $w(G)$ is finite.
 Proposition~\ref{pro:instead-of-Baire} enables us to strengthen this
 result: we show -- without recourse to the Continuum Hypothesis (or
 Martin's Axiom) -- that every multilinear commutator word is strongly
 concise.  For this we employ combinatorial techniques that were
 developed in~\cite{DMS-nilpotent,DMS-cosets} specifically for
 handling multilinear commutator words.

Throughout this section, we fix $\kr \in \mathbb{N}$ and a
multilinear commutator word
\[
w = w(x_1,\dots , x_{\kr}).
\]
Furthermore, $G$ is a profinite group.  For
$A_1, \dots , A_{\kr} \subseteq G$, we denote by
\[
w(A_1, \dots , A_{\kr})
\]
the subgroup generated by all $w$-values $w(a_1, \dots , a_{\kr})$,
where $a_i \in A_i$ for $1 \le i \le r$.  For
$I \subseteq \{1, \ldots, {\kr} \}$ we write
$\overline{I} = \{1, \dots ,{\kr} \} \smallsetminus I$.  For families
of variables $\mathbf{y} = (y_i)_{i \in I}$,
$\mathbf{z} = (z_i)_{i \in \overline{I}}$ we define
\[
w_I(\mathbf{y};\mathbf{z}) = w(u_1, \ldots, u_r), \quad
\text{where } u_s =
\begin{cases}
  y_s & \text{if $s \in I$,} \\
  z_s & \text{if $s \not\in I$.}
\end{cases}
\]
The notation extends to families $\mathbf{A} = (A_i)_{i \in I}$,
$\mathbf{B} = (B_i)_{i \in \overline{I}}$ of subsets of $G$ in the
natural way: $w_I(\mathbf{A};\mathbf{B})$ denotes the subgroup
generated by the relevant $w$-values.  For short, we write
$w_I(y_i;z_i)$ in place of $w_I(\mathbf{y};\mathbf{z})$ and
$w_I(A_i;B_i)$ in place of~$w_I(\mathbf{A};\mathbf{B})$.

The following are corollaries of \cite[Lemma~2.5]{DMS-cosets} and
\cite[Lemma~4.1]{DMS-nilpotent}.

\begin{cor}\label{cor:uno} Let $H \trianglelefteq_\mathrm{c} G$.
  Suppose that $g_1, \ldots, g_{\kr} \in G$ and $g \in G$ are such
  that $w(g_1 h_1, \ldots, g_{\kr} h_{\kr}) =  g $ for 
  all  $h_1, \ldots, h_{\kr} \in H$.
  Then
  $w_I(g_i H; H) = 1$ for every proper subset
  $I \subsetneqq \{1,\dots,{\kr}\}$.
\end{cor}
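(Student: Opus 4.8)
The plan is to obtain the statement by specialising the two combinatorial lemmas quoted just above, \cite[Lemma~2.5]{DMS-cosets} and \cite[Lemma~4.1]{DMS-nilpotent}, to the data of the present corollary, with $H$ in the role of each of the relevant normal subgroups and $g_1, \dots, g_r$ as coset representatives. The case $r = 1$ is immediate: then $w = x_1$, the hypothesis reads $g_1 h_1 = g$ for all $h_1 \in H$, which forces $H = 1$, and so $w_I(g_i H; H) = 1$ for the unique proper subset $I = \varnothing$. Hence assume $r \ge 2$ and fix the commutator-tree decomposition $w = [\alpha, \beta]$, with $\alpha = \alpha(x_i : i \in S)$ and $\beta = \beta(x_j : j \in T)$ multilinear commutator words on the variables indexed by a nontrivial partition $\{1, \dots, r\} = S \sqcup T$; this decomposition underlies the inductive structure of the cited lemmas.

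The substantive input is a coset expansion: evaluating the multilinear commutator word $w$ at $(g_1 h_1, \dots, g_r h_r)$ with $h_1, \dots, h_r \in H$ produces $w(g_1, \dots, g_r)$ multiplied by a product of correction terms, one for each proper subset $I \subsetneqq \{1, \dots, r\}$, where the term attached to $I$ is a product of $G$-conjugates of $w_I(g_i H; H)$-values, and where the subsets that occur, the order of the factors, and the conjugating elements are all governed by the tree of $w$ and the partition $S \sqcup T$. Putting $h_1 = \dots = h_r = 1$ collapses every correction term and identifies the constant value in the hypothesis as $g = w(g_1, \dots, g_r)$; letting the $h_i$ then range over $H$, constancy of $w$ on the product of cosets says exactly that the entire correction product is trivial for every choice of $h_1, \dots, h_r \in H$.

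From the triviality of this product I would read off the individual conclusions. The point is that the correction terms separate over the subsets $I$: for a fixed proper $I$ and a target $w_I(c_i; d_i)$ with $c_i \in g_i H$ for $i \in I$ and $d_i \in H$ for $i \notin I$, one can choose the remaining $h_j$ so that the $I$-term of the expansion realises a $G$-conjugate of this target while every other correction term collapses; triviality of the product then forces the target to be trivial, and running this over all proper $I$ --- including $I = \varnothing$, for which the assertion is just $w(H, \dots, H) = 1$ --- yields $w_I(g_i H; H) = 1$ throughout. Equivalently, the extraction can be arranged as a downward induction on $\lvert I \rvert$: the top layer $I = \{1, \dots, r\} \smallsetminus \{k\}$, governed by $w$ with the representative $g_k$ dropped from its $k$-th argument, is peeled off first, the induction step being where the decomposition $w = [\alpha, \beta]$ and the case $r-1$ applied to $\alpha$ and $\beta$ over the same $H$ come in; passing to closed subgroups everywhere keeps the argument inside the profinite category.

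The main obstacle --- and the reason for quoting \cite{DMS-cosets} and \cite{DMS-nilpotent} rather than attempting a short commutator manipulation --- is the separation step just described. Over a normal but non-central $H$ the coset expansion carries conjugation twists and a rigid, tree-dependent ordering of the terms indexed by the subsets $I$, so that constancy of $w$ a priori kills only one particular product of the $w_I$-parts and not each part on its own; that one can nonetheless strip off the subgroups $w_I(g_i H; H)$ layer by layer is exactly the combinatorial content of those papers, which here is invoked as a black box.
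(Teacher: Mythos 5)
Your proposal is correct and takes essentially the same route as the paper: the paper offers no independent argument for this corollary, recording it simply as a direct consequence of \cite[Lemma~2.5]{DMS-cosets} (with \cite[Lemma~4.1]{DMS-nilpotent} serving the companion Corollary on $w_I(g_ih_i;h_i)=w_I(g_i;h_i)$). Your sketch of the coset expansion and the layer-by-layer separation is a fair account of what those cited results contain, and since you ultimately invoke them as a black box, your logical chain coincides with the paper's citation-only proof.
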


\begin{cor}\label{cor:M} 
  Let $H \trianglelefteq_\mathrm{c} G$, and suppose that
  $I \subseteq \{1, \dots ,{\kr} \}$ is such that $w_J(G; H) = 1$ for all
  $J \subsetneqq I$.  Then $w_I(g_ih_i; h_i) = w_I(g_i;h_i)$ for all $g_i \in G$, 
  $i \in I$, and all $h_1, \ldots, h_{\kr} \in H$.
\end{cor}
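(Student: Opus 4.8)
The plan is to read the identity off from the combinatorial machinery describing how a multilinear commutator word reacts when its arguments are translated by elements of a closed normal subgroup, which is exactly what \cite[Lemma~2.5]{DMS-cosets} and \cite[Lemma~4.1]{DMS-nilpotent} provide. For $S\subseteq\{1,\dots,{\kr}\}$ write $w_S(g_i;h_i)$ for the $w$-value whose $j$-th argument equals $g_j$ when $j\in S$ and $h_j$ otherwise. Applying the expansion lemma in the slots indexed by $I$ yields an identity of the form
\[
w_I(g_ih_i;h_i)\;=\;\prod_{S\subseteq I}\big(w_S(g_i;h_i)\big)^{c_S},
\]
for suitable elements $c_S\in G$ and in a suitable order, in which the factor belonging to $S=I$ is $w_I(g_i;h_i)^{c_I}$. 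For any proper subset $S\subsetneq I$ the value $w_S(g_i;h_i)$ has its $G$-entries precisely in the slots indexed by $S$ and an $H$-entry in every other slot, so it lies in $w_S(G;H)$; by hypothesis $w_S(G;H)=1$, so that factor is trivial. Hence the product collapses to $w_I(g_ih_i;h_i)=w_I(g_i;h_i)^{c_I}$.

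It remains to check that $c_I$ centralises $w_I(g_i;h_i)$. The expansion exhibits $c_I$ as a product of the translating elements $h_i$ with $i\in I$ and of conjugates of such elements, which all lie in $H$ by normality, so $c_I\in H$. If $\overline I\neq\varnothing$, then $w_I(g_i;h_i)$ is an iterated commutator at least one of whose entries lies in the normal subgroup $H$, and hence $w_I(g_i;h_i)\in H$; using once more the expansion identities, the normality in $G$ of the subgroups $w_S(G;H)$ and three-subgroup-lemma manipulations, one places $[w_I(g_i;h_i),c_I]$ inside the product of the $w_S(G;H)$ with $S\subsetneq I$, so it vanishes. If $\overline I=\varnothing$, i.e.\ $I=\{1,\dots,{\kr}\}$, then the vanishing of $w_S(G;H)$ for all $S\subsetneq\{1,\dots,{\kr}\}$ is strong enough to force the conjugation by $c_I$ to be trivial at once. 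In either case $w_I(g_ih_i;h_i)=w_I(g_i;h_i)$, which is what we want. (If one prefers not to expand simultaneously in all slots of $I$, the same conclusion follows by induction on $\lvert I\rvert$, peeling off the $h_i$ one slot at a time and invoking the case already treated to rewrite the intermediate terms.)

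The step I expect to be the main obstacle is the control of the conjugating element $c_I$ and of the commutator $[w_I(g_i;h_i),c_I]$: the bare multilinear expansion delivers the identity only up to conjugacy, and it is precisely here that one has to lean on the delicate combinatorial bookkeeping of \cite[Lemma~2.5]{DMS-cosets} and \cite[Lemma~4.1]{DMS-nilpotent}, in combination with the vanishing of every $w_S(G;H)$ for $S\subsetneq I$, to remove it.
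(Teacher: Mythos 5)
Your proposal is correct and follows essentially the same route as the paper: the paper states this as a direct consequence of \cite[Lemma~2.5]{DMS-cosets} and \cite[Lemma~4.1]{DMS-nilpotent} with no further argument, and the latter lemma is in substance exactly the statement at hand, so deferring the control of the conjugating element $c_I$ to that machinery is precisely what the paper does. Your sketch of the underlying mechanism --- multilinear expansion, vanishing of the cross terms because they lie in the subgroups $w_S(G;H)$ with $S\subsetneq I$, and removal of the residual conjugation using that the relevant values land in subgroups killed by the hypothesis --- accurately reflects how the cited lemmas are proved.
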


Next we employ the hypothesis $\lvert G_w \rvert < 2^{\aleph_0}$.

\begin{lemma}\label{step1} 
  Let $H \trianglelefteq_\mathrm{o} G$ and
  $I \subsetneqq \{1, \dots, {\kr}\}$ be such that
  \begin{equation} \label{equ:star-condition}
  \text{$w_J (G; H)=1$ for all $J \subsetneqq I$.} \tag{$\ast$}
  \end{equation}
  Suppose that $\lvert G_w \rvert < 2^{\aleph_0}$.  Let
  $(g_i)_{i\in I }$ be an arbitrary family in~$G$.  Then
  there exists
  $U \trianglelefteq_\mathrm{o} G$, with $U \subseteq H$, such that
  \[
  w_I(g_i; U) =1.
  \]
\end{lemma}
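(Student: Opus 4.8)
The plan is to feed the cardinality assumption into Proposition~\ref{pro:instead-of-Baire} and then extract the ``multilinearity'' of $w$ from Corollaries~\ref{cor:uno} and~\ref{cor:M}. First I would consider the continuous map
\[
\varphi \colon \prod\nolimits_{i \in \overline{I}} H \longrightarrow G, \qquad
(h_i)_{i \in \overline{I}} \longmapsto w_I(g_i; h_i).
\]
Its domain is a non-empty profinite space --- here $\overline{I} \ne \varnothing$, since $I \subsetneqq \{1,\dots,\kr\}$ --- and its image is contained in $G_w$, hence has fewer than $2^{\aleph_0}$ elements. By Proposition~\ref{pro:instead-of-Baire}, $\varphi$ is constant on some non-empty open subset of its domain. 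Refining this to a product of cosets of open subgroups of $H$ and then shrinking once more --- using that $G$ is profinite and $H$ is open, so that the common subgroup involved may be taken normal in $G$ --- one obtains an open normal subgroup $U \trianglelefteq_\mathrm{o} G$ with $U \subseteq H$; elements $g_i \in H$ for $i \in \overline{I}$, which together with the given family form a full family $(g_1,\dots,g_{\kr})$; and an element $c \in G$ such that
\[
w(u_1,\dots,u_{\kr}) = c \quad \text{for all } u_1,\dots,u_{\kr} \text{ with } u_s = g_s \text{ for } s \in I \text{ and } u_s \in g_s U \text{ for } s \in \overline{I}.
\]
This is the one place where the hypothesis $\lvert G_w \rvert < 2^{\aleph_0}$ is used.

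Next I would strengthen this to: $w(u_1,\dots,u_{\kr}) = c$ whenever $u_s \in g_s U$ for \emph{every} $s \in \{1,\dots,\kr\}$. Given such a tuple, write $u_s = g_s h_s$ with $h_s \in U \subseteq H$ for $s \in I$, and note that $u_s \in g_s U \subseteq H$ for $s \in \overline{I}$. Condition~\eqref{equ:star-condition} is precisely the hypothesis of Corollary~\ref{cor:M}, which therefore permits replacing each $u_s$ by $g_s$ for $s \in I$ without changing the value of $w$; that modified value equals $c$ by the previous paragraph. (For $I = \varnothing$ this step is vacuous.) Consequently $g_1,\dots,g_{\kr} \in G$ and $c \in G$ satisfy $w(g_1 u_1,\dots,g_{\kr} u_{\kr}) = c$ for all $u_1,\dots,u_{\kr} \in U$, so Corollary~\ref{cor:uno}, applied with $U$ in the role of $H$, yields $w_J(g_i U; U) = 1$ for every proper subset $J \subsetneqq \{1,\dots,\kr\}$. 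Taking $J = I$ and restricting the $I$-coordinates from the cosets $g_i U$ to the single elements $g_i$ gives $w_I(g_i; U) \le w_I(g_i U; U) = 1$, as required.

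The step that needs the most care is the second one. The point is that Proposition~\ref{pro:instead-of-Baire} provides constancy of $\varphi$ only on the \emph{small} cosets $g_s U$ with $s \in \overline{I}$, whereas Corollary~\ref{cor:M} has to be invoked relative to the \emph{original} subgroup $H$: after the substitution $u_s = g_s h_s$ the coordinates indexed by $\overline{I}$ still lie in $g_s U \subseteq H$, but in general not in $U$. Keeping these two roles of ``$H$'' apart is exactly where condition~\eqref{equ:star-condition} does its work. Everything else --- the continuity of the word map and the reduction making $U$ normal in $G$ --- is routine.
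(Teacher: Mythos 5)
Your proof is correct and follows essentially the same route as the paper's: Proposition~\ref{pro:instead-of-Baire} to obtain constancy on a product of cosets of an open normal subgroup $U \subseteq H$, followed by Corollaries~\ref{cor:uno} and~\ref{cor:M}. The only cosmetic difference is that the paper feeds Proposition~\ref{pro:instead-of-Baire} the map $(h_1,\ldots,h_{\kr}) \mapsto w_I(g_i h_i; h_i)$ varying all $\kr$ coordinates and then applies Corollary~\ref{cor:uno} before Corollary~\ref{cor:M}, whereas you vary only the $\overline{I}$-coordinates and invoke the two corollaries in the opposite order; both orderings work.
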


\begin{proof}
  The image of the continuous map
  \[
  H \times \cdots \times H \to G, \quad (h_1, \ldots , h_{\kr})
  \mapsto w_I(g_i h_i; h_i)
  \]
  contains less than $2^{\aleph_0}$ elements.  By
  Proposition~\ref{pro:instead-of-Baire}, there exist
  $b_1, \ldots, b_r \in H$ and $U \trianglelefteq_\mathrm{o} G$, with
  $U \subseteq H$, such that
  \[
  w_I(g_i b_i u_i; b_i u_i) = w_I(g_i b_i ; b_i ) \quad \text{for
    all $u_1, \ldots , u_{\kr} \in U$}. 
  \]
 
  As $I \subsetneqq \{ 1, \dots , {\kr}\}$, we conclude from
  Corollary~\ref{cor:uno} that
  \begin{equation}\label{equ:1-way}
    w_I(g_i b_i U; U)=1.
  \end{equation}
  On the other hand, based on~\eqref{equ:star-condition} and the fact
  that $b_i U\subseteq H$, we deduce from Corollary~\ref{cor:M} that
  \begin{equation}\label{equ:2-way}
    w_I(g_i b_i U; U) = w_I(g_i ; U).
  \end{equation}
  From \eqref{equ:1-way} and~\eqref{equ:2-way} we conclude that
  $w_I(g_i ; U) =1$.
\end{proof}
 
\begin{lemma}\label{basic-step}
  Suppose that $\lvert G_w \rvert < 2^{\aleph_0}$.  Suppose further 
  that $H \trianglelefteq_{\mathrm{o}} G$ satisfies~$w (H)=1$.  Then
  $G_w$ is finite.
\end{lemma}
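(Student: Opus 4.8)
The plan is to exploit the open normal subgroup $H$ with $w(H)=1$ as a ``thin'' piece of $G$, and to leverage the fact that $w$ is multilinear so that $w$-values decompose nicely along cosets of $H$. Since $H$ is open, the quotient $G/H$ is finite, so $G$ is covered by finitely many cosets $g^{(1)}H, \ldots, g^{(t)}H$. The key idea is that any $w$-value $w(a_1,\ldots,a_r)$ can be written with each $a_j = g_j h_j$ for a fixed coset representative $g_j$ (chosen from the finite list) and some $h_j \in H$; the multilinearity of $w$ then lets us ``expand'' this value as a product over subsets $I \subseteq \{1,\ldots,r\}$ of contributions of the form $w_I(g_i; H)$-values (roughly, the terms where the $H$-part survives on the coordinates in $I$ and the representative survives on $\overline{I}$), modulo lower-complexity terms.

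The main step is an induction showing that $w_I(g_i; H) = 1$ for every proper subset $I \subsetneqq \{1,\ldots,r\}$ and every choice of the $g_i$ among the finitely many coset representatives. I would induct on $|I|$. For $I = \varnothing$, $w_\varnothing(\,;H)$ is generated by the single value $w(g_1,\ldots,g_r) \in G$, and we should check this is trivial --- in fact for the base case one observes $w_{\varnothing}$ contributes nothing once we know enough, or more carefully we treat $I=\varnothing$ separately using that $w(H)=1$ forces, via the expansion with all coordinates in $H$, that $w(g^{(1)}_{\,} \cdots)$-type products vanish. Assuming $w_J(G;H)=1$ for all $J \subsetneqq I$, hypothesis~\eqref{equ:star-condition} of Lemma~\ref{step1} is satisfied, so for the fixed family $(g_i)_{i \in I}$ that lemma produces an open $U \trianglelefteq_\mathrm{o} G$ with $U \subseteq H$ and $w_I(g_i; U) = 1$. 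The point is then to bootstrap from $w_I(g_i; U) = 1$ up to $w_I(g_i; H) = 1$: since $H$ is covered by finitely many cosets of $U$, one writes each $h \in H$ as $vu$ with $v$ ranging over a finite transversal and $u \in U$, applies Corollary~\ref{cor:M} (valid by the inductive hypothesis) to strip off the $U$-part, reducing a general $w_I(g_i;H)$-value to finitely many $w_I(g_i v_i; U)$-values; each of these is trivial by another application of Lemma~\ref{step1} with the representatives $g_i v_i \in H \subseteq G$ replaced appropriately --- here one uses that $g_i v_i$ still lies in a coset of $H$, so it is harmless to absorb $v_i$. Iterating over the finitely many transversal choices gives $w_I(g_i; H) = 1$, completing the induction.

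Once $w_I(g_i; H) = 1$ is established for all proper $I$ and all coset representatives $g_i$, the multilinear expansion shows that for arbitrary $a_1,\ldots,a_r \in G$ the value $w(a_1,\ldots,a_r)$ equals a product of terms each of which lies in some $w_I(g_i;H)$ with $I \subsetneqq \{1,\ldots,r\}$ --- except for the ``full'' term $I = \{1,\ldots,r\}$, which is a $w_{\{1,\ldots,r\}}(h_i;h_i)$-value with all $h_i \in H$, hence a $w(H)$-value, hence trivial. Therefore every $w$-value is a product of a \emph{bounded} number (bounded in terms of $|G/H|$ and $r$ only) of trivial factors, so in fact $w(a_1,\ldots,a_r)$ depends only on the coset tuple $(g_1,\ldots,g_r)$ and not on the $h_i$; since there are only $|G/H|^r$ such tuples, $G_w$ is finite. (One can phrase the last paragraph slightly more robustly: show directly that $G_w \subseteq w(g^{(j_1)}, \ldots, g^{(j_r)})$-type cosets times a finite correction, but the cleanest route is the coset-independence just described.)

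The hard part will be making the ``multilinear expansion'' and the bootstrap from $U$ to $H$ fully rigorous --- in particular keeping careful track, in the inductive step, of the fact that the representatives appearing after absorbing transversal elements still satisfy the hypotheses needed to invoke Corollaries~\ref{cor:uno} and~\ref{cor:M} and Lemma~\ref{step1}. The finiteness of $G/H$ and $H/U$ is what keeps everything bounded, so no limiting or Baire-type argument is needed beyond the single use of Proposition~\ref{pro:instead-of-Baire} already packaged inside Lemma~\ref{step1}; the combinatorial lemmas from~\cite{DMS-nilpotent,DMS-cosets} do all the heavy lifting on the commutator-calculus side, and this proof is essentially an orchestration of them.
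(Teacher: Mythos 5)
Your overall architecture (induct on $|I|$, kill the $w_I$-contributions for proper $I$, then use Corollary~\ref{cor:M} to see that $w$-values depend only on coset tuples) matches the paper's. But the induction statement you propose --- that $w_I(g_i;H)=1$ for every proper $I$, with the \emph{original} subgroup $H$ --- is too strong and is false in general. Take $w=[x_1,x_2]$, $G$ a finite dihedral group of order $2n$ with $n>2$, $H$ its cyclic rotation subgroup (so $w(H)=1$ and trivially $\lvert G_w\rvert<2^{\aleph_0}$), $I=\{1\}$ and $g_1$ a reflection: then $[g_1,h]=h^{-2}$, so $w_{\{1\}}(g_1;H)=\langle h^2\mid h\in H\rangle\neq 1$. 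Correspondingly, your ``bootstrap'' from $w_I(g_i;U)=1$ up to $w_I(g_i;H)=1$ cannot work: Corollary~\ref{cor:M} strips subgroup-parts only from the coordinates \emph{in} $I$ (it turns $w_I(g_ih_i;h_i)$ into $w_I(g_i;h_i)$), whereas to reduce $w_I(g_i;v_ju_j)$ with $h_j=v_ju_j$, $j\in\overline{I}$, to values over $U$ you would need to strip the $U$-part from the coordinates in $\overline{I}$, and no lemma at your disposal does that. Absorbing the transversal elements $v_j$ into new ``representatives'' also does not help, because those coordinates sit in the $\overline{I}$-slots, not the $I$-slots.

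The fix is to give up on keeping $H$ fixed and let the auxiliary subgroup shrink with $\lvert I\rvert$: prove by induction that for each proper $I$ there is some open normal $U_I\subseteq G$ (contained in $U=\bigcap_{J\subsetneqq I}U_J$) with $w_I(G;U_I)=1$. In the inductive step one takes a transversal $R$ of $U$ in $G$, applies Lemma~\ref{step1} to each of the finitely many families $(g_i)_{i\in I}$ in $R$ to get $w_I(g_i;U_{\mathbf g})=1$, intersects to get $U_I$, and then uses Corollary~\ref{cor:M} (with $U$ in the role of the normal subgroup, which is legitimate since $w_J(G;U)=1$ for $J\subsetneqq I$) to pass from the transversal to all of $G$ in the $I$-coordinates while the $\overline{I}$-coordinates stay inside the small group $U_I$. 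Finally one sets $V=\bigcap_{I\subsetneqq\{1,\dots,r\}}U_I$ and concludes as you do, via Corollary~\ref{cor:M} with $I=\{1,\dots,r\}$, that $w$-values depend only on cosets of $V$. Your final paragraph and the use of Lemma~\ref{step1} are sound; it is only the insistence on returning to the fixed subgroup $H$ at each stage that breaks the argument.
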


\begin{proof} 
  Below we construct $V \trianglelefteq_\mathrm{o} G$
   such that
  \begin{equation}\label{equ:**} w_J( G; V ) = 1 \qquad
    \text{for every proper subset $J \subsetneqq \{1, \ldots, \kr\}$.}
  \end{equation}
  Let $S$ be a transversal, i.e., a set of coset representatives, for
  $V$ in~$G$.  From~\eqref{equ:**} and Corollary~\ref{cor:M} we deduce
  that
  \[
  w(g_1 v_1,\ldots,g_{\kr} v_{\kr}) = w(g_1,\ldots,g_{\kr})  \quad
  \text{for all $g_1,\ldots,g_{\kr} \in S$ and all $v_1,\ldots,v_{\kr} \in V$.}
  \]
  Since $G = \bigcup \{ g V \mid g \in S \}$, this shows that $G_w = \{
  w(g_1,\ldots,g_{\kr}) \mid g_1, \ldots, g_{\kr} \in S \}$ is finite.

  \smallskip

  It remains to produce $V \trianglelefteq_\mathrm{o} G$ such that
  \eqref{equ:**} holds.  Indeed, we prove for
  $I \subsetneqq \{1,\ldots,\kr\}$, by induction on $\lvert I \rvert$,
  that there exists $U_I \trianglelefteq_\mathrm{o} G$
    such that $w_I( G; U_I) = 1$.  The group $V$ then
  results from intersecting the finitely many groups $U_I$,
  where $I \subsetneqq \{1, \ldots, r\}$.
 
  Let $I \subsetneqq \{1,\ldots,\kr\}$. If $I = \varnothing$ then
  $U_\varnothing = H$ satisfies
  $w_\varnothing(G ; U_\varnothing) = w(H) =1$.  Now suppose that
  $\lvert I \rvert \ge 1$.  For each $J \subsetneqq I$ induction
  yields $U_J \trianglelefteq_\mathrm{o} G$
  such that $w_J(G; U_J) =1$.  Then
  $U = \bigcap \{ U_J \mid J \subsetneqq I \}
  \trianglelefteq_\mathrm{o} G$
  satisfies
  \begin{equation}\label{equ:*} w_J( G; U )=1\qquad
    \text{for every proper subset $J \subsetneqq I$.}
  \end{equation}

  Let $R$ be a transversal for $U$ in~$G$.  For each family
  $\mathbf{g} = (g_i)_{i \in I}$ in~$R$, Lemma~\ref{step1} yields
  $U_\mathbf{g} \trianglelefteq_\mathrm{o} G$, with
  $U_\mathbf{g} \subseteq U$, such that $w_I(g_i ; U_\mathbf{g} ) =1$.
  Intersecting the finitely many groups $U_\mathbf{g}$, parametrised
  by $\mathbf{g}$, we obtain
  $U_I \trianglelefteq_\mathrm{o} G$, with $U_I \subseteq U$, such
  that
  \[ 
  w_I(g_i ; U_I ) = 1 \qquad \text{for all families $\mathbf{g} = (g_i)_{i \in I}$
    in $R$.}
  \]
  From~\eqref{equ:*} and Corollary~\ref{cor:M} we deduce that
  \[
  w_I(g_i U ; U_I ) = w_I(g_i ; U_I) =1 \qquad \text{for all families
    $\mathbf{g} =  (g_i)_{i \in I}$ in $R$.}
  \]
  Since $G = \bigcup \{ g U \mid g \in R \}$, this shows that 
 $w_I (G ; U_I ) = \langle \bigcup_\mathbf{g} w_I(g_i U; U_I )\rangle
  =1$.
\end{proof}

With these preparations we are ready to prove
Theorem~\ref{thm:mcw-more}.

\begin{proof}[Proof of Theorem~{\rm\ref{thm:mcw-more}}]
  Recall that $w = w(x_1,\ldots,x_r)$ is a multilinear commutator word
  and that $G$ is a profinite group such that
  $\lvert G_w \rvert < 2^{\aleph_0}$.  Clearly, we may assume that
  $r \ge 1$.  By Proposition~\ref{pro:instead-of-Baire} and
  Corollary~\ref{cor:uno}, there exists
  $H \trianglelefteq_\mathrm{o} G$ such that $w(H)=1$.  Thus
  Lemma~\ref{basic-step} shows that $\lvert G_w \rvert < \infty$ and
  the claim follows from~\cite[Theorem~1]{Wi74} (or
  \cite[Theorem~1.1]{DeMoSh16}).
\end{proof}


\section{The case where $w(G)$ is generated by finitely many
  $w$-values}  \label{sec:fin-many-w-values}

In theory, the task of establishing the strong conciseness of a group
word $w$ for a class $\mathcal{C}$ of profinite groups can be divided
into two steps: Given $w$ and a profinite group $G$ in $\mathcal{C}$
such that $\lvert G_w \rvert < 2^{\aleph_0}$, it suffices to show that
\begin{itemize}
\item[$\circ$] $w(G)$ is generated by finitely many $w$-values and
\item[$\circ$] using this extra information, the group $w(G)$ is
  finite.
\end{itemize}

If $w(G)$ is a pro-$p$ group, for some prime~$p$, the situation
simplifies further: the verbal subgroup $w(G)$ is generated by
finitely many $w$-values if and only if it is finitely generated.
 Indeed, it suffices to look at the Frattini quotient of
  $w(G)$, an elementary abelian pro-$p$ group. In addition, we have
the following useful lemma.

\begin{lemma} \label{lem:reduction-to-pro-p-central-exp-p}
 Let $w$ be a group word and let $G$ be a profinite
    group such that $\lvert G_w \rvert < 2^{\aleph_0}$.  Suppose that
    $N = w(G)$ is a pro-$p$ group, for some prime~$p$, and that
    $N/[N,G]N^p$ is finite.  Then $w(G)$ is generated by finitely many
    $w$-values.
\end{lemma}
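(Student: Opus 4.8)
The plan is to reduce to showing that the Frattini quotient $V := N/\Phi(N)$ is finite, where $\Phi(N) = \overline{[N,N]}\,N^{p}$; by the remark preceding the statement this is equivalent to $N$ being generated by finitely many $w$-values. Viewing $V$ as a profinite $\mathbb{F}_{p}$-module under the conjugation action of $G$, and using $[N,N] \le [N,G]$ to identify $N/[N,G]N^{p}$ with the module of coinvariants $V_{G} = V/[V,G]$ (here $[V,G] = [N,G]N^{p}/\Phi(N)$), the hypothesis says precisely that $V_{G}$ is finite. Hence it suffices to show that $[V,G]$ is finite, since then $\lvert V\rvert = \lvert V_{G}\rvert\cdot\lvert [V,G]\rvert$.

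First I would apply Proposition~\ref{pro:instead-of-Baire} to the continuous map $\psi\colon G^{r} \to V$, $\underline g \mapsto w(\underline g)\,\Phi(N)$: since $\mathrm{Im}\,\psi$ is contained in the image of $G_{w}$ we have $\lvert\mathrm{Im}\,\psi\rvert \le \lvert G_{w}\rvert < 2^{\aleph_{0}}$, so $\psi$ is not nowhere locally constant. Thus there are $a_{1},\dots,a_{r} \in G$ and an open normal subgroup $K \trianglelefteq_{\mathrm o} G$ with
\[
w(a_{1}k_{1},\dots,a_{r}k_{r}) \equiv w(a_{1},\dots,a_{r}) \pmod{\Phi(N)} \qquad\text{for all } k_{1},\dots,k_{r} \in K,
\]
and, since $G_{w}$ is a union of $G$-conjugacy classes, each finite by Lemma~\ref{lem:fin-conj-cl}, the same congruence holds along every simultaneous $G$-conjugate of $(a_{1},\dots,a_{r})$.

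The heart of the argument --- and the step I expect to be the main obstacle --- is to leverage this ``local constancy modulo $\Phi(N)$'', via word and commutator identities in the spirit of the coset identities of Section~\ref{sec:mcw} (Corollaries~\ref{cor:uno} and~\ref{cor:M}), so as to reduce to the case in which $N$ is central in $G$ and has exponent $p$: there $[N,G]=1$ and $N^{p}=1$, the hypothesis just reads ``$N$ is finite'', and there is nothing left to prove. To carry out this reduction I would argue by induction, peeling off the quotients $N_{i}/N_{i+1}$ of the series $N_{0}=N$, $N_{i+1}=\overline{[N_{i},G]}\,N_{i}^{\,p}$: the base quotient $N/N_{1}=N/[N,G]N^{p}$ is finite by hypothesis, and the local-constancy statement, pushed through the word calculus, should force each successive quotient to be finite and the series to reach $1$ after finitely many steps (after possibly passing to an open subgroup). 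The point where general words behave worse than multilinear commutator words is exactly the control of the ``correction terms'' $w(\underline s\,\underline h)\,w(\underline s)^{-1}$ that measure the failure of $\psi$ to be constant on a coset of a given open subgroup: for a multilinear commutator word these vanish on suitable cosets (this is essentially Corollary~\ref{cor:uno}), whereas for a general word they must be tracked layer by layer along the series above, the finiteness of the relevant coinvariant quotient being fed in at each stage.

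Once $V = N/\Phi(N)$ is finite, $N$ is topologically finitely generated; being pro-$p$, it is generated by finitely many elements, and since the $w$-values topologically generate $N$ these generators may be chosen among the $w$-values, which is the assertion. Everything outside the third paragraph is routine manipulation of the Frattini quotient together with a single application of Proposition~\ref{pro:instead-of-Baire}; the genuine difficulty is the word calculus underpinning the reduction to the central exponent-$p$ case.
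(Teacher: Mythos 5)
Your first and last paragraphs are fine: for a pro-$p$ group $N$ the conclusion is indeed equivalent to the finiteness of $V=N/\Phi(N)$, and the identification of $N/[N,G]N^p$ with $V/[V,G]$ is correct. The problem is that the entire content of the lemma sits in your third paragraph, which you leave as a plan rather than a proof --- and the plan, as described, does not go through. Restricted to the elementary abelian quotient $V$, your series $N_0=N$, $N_{i+1}=\overline{[N_i,G]}\,N_i^{\,p}$ becomes the lower central series $V\supseteq [V,G]\supseteq [V,G,G]\supseteq\cdots$ of the $G$-action on $V$, and there is no reason for it to reach $1$ in finitely many steps: $G$ need not act nilpotently on $V$ (a $p'$-group of automorphisms acting fixed-point-freely gives $[V,G]=V$, so the series is constant and the finiteness of the base quotient $V/[V,G]$ carries no information about the deeper layers). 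Hence ``reducing to the case where $N$ is central of exponent $p$'' is not attainable along that series. Likewise, the local constancy of $w$ modulo $\Phi(N)$ on a single coset, which you correctly extract from Proposition~\ref{pro:instead-of-Baire}, cannot in general be upgraded to the global coset identities of Corollaries~\ref{cor:uno} and~\ref{cor:M}: that upgrade is exactly the obstruction for arbitrary words, and overcoming it (only in the nilpotent setting) is what the machinery of Section~\ref{sec:para-words} is for. So the step you flag as ``the main obstacle'' is a genuine gap, not a technicality.

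The paper's proof avoids all of this and is essentially three lines long, using neither Proposition~\ref{pro:instead-of-Baire} nor any word calculus. Since $N/[N,G]N^p$ is finite and $N$ is topologically generated by $G_w$, one chooses a finite set $X$ of $w$-values with $N=\langle X\rangle[N,G]N^p$. The $G$-invariant set $Y=\{x^g\mid x\in X,\ g\in G\}$ of $w$-values then generates $N$ modulo $N\cap K$ for every $K\trianglelefteq_{\mathrm{o}}G$ --- a relative Frattini-type argument in the finite $p$-group $N/(N\cap K)$, which is where the pro-$p$ hypothesis enters --- and hence generates $N$; finally $Y$ is finite because each $x\in X$ has a finite conjugacy class by Lemma~\ref{lem:fin-conj-cl}, which is the only point at which $\lvert G_w\rvert<2^{\aleph_0}$ is used. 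In short: the intended mechanism is conjugation of finitely many $w$-values combined with the finiteness of their conjugacy classes, not control of $[V,G]$ layer by layer.
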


\begin{proof}
  Let $X$ be a finite set of $w$-values such that
  $N = \langle X \rangle [N,G] N^p$.  Since $N$ is a pro-$p$ group,
  the set $\{ x^g \mid x \in X, g \in G\}$ generates $N$ modulo
  $N \cap K$ for every open normal subgroup
  $K \trianglelefteq_\mathrm{o} G$.  Hence
  $N = \langle x^g \mid x \in X, g \in G \rangle$, and from
  Lemma~\ref{lem:fin-conj-cl} we conclude that
  $\{ x^g \mid x \in X, g \in G\} $ is finite.
\end{proof}

We recall that a group word $w$ has finite width in an abstract group
$G$ if there exists $m \in \mathbb{N}$ such that every element
$g \in w(G)$ can be written as a product $g = g_1 \cdots g_m$, where
each $g_i$ is a $w$-value or the inverse of a $w$-value in~$G$.
  This notion extends naturally to profinite groups.  If
  $w$ has finite width in a profinite group~$G$, then $w(G)$ coincides
  with the abstract subgroup generated by $G_w$; see
  \cite[Proposition~4.1.2]{Se09}.

\begin{cor} \label{cor:reduction-to-pro-p-central-exp-p} 
    Let $w$ be a group word and let $G$ be a profinite group such that
    $\lvert G_w \rvert < 2^{\aleph_0}$.  Suppose that $w(G)$ is a
    pro-$p$ group, for some prime~$p$, and that $w$ has finite width
    in every finitely generated subgroup $H$ of~$G$.  Then $N = w(G)$
    is finite if and only if $N/[N,G]N^p$ is finite.
\end{cor}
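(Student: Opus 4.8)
The plan is to deduce Corollary~\ref{cor:reduction-to-pro-p-central-exp-p} from Lemma~\ref{lem:reduction-to-pro-p-central-exp-p} together with Wilson's theorem on finite verbal subgroups. Suppose first that $N = w(G)$ is finite; then $N/[N,G]N^p$ is a quotient of $N$, hence finite, so that direction is immediate and carries no hypotheses. For the substantive direction, assume $N/[N,G]N^p$ is finite. The hypothesis on finite width is precisely what lets us invoke Lemma~\ref{lem:reduction-to-pro-p-central-exp-p}: I need $N$ to be generated by finitely many $w$-values, and Lemma~\ref{lem:reduction-to-pro-p-central-exp-p} gives exactly this under the assumptions that $N$ is pro-$p$ and $N/[N,G]N^p$ is finite, both of which are in force. (Strictly, the finite-width hypothesis is not even needed for that application, but it is needed for the last step.)

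So after applying Lemma~\ref{lem:reduction-to-pro-p-central-exp-p} we know $w(G)$ is generated by a finite set $Y$ of $w$-values. The remaining task is to upgrade ``generated by finitely many $w$-values'' to ``finite''. Here is where the finite-width hypothesis enters: let $H = \langle Y \rangle$, a finitely generated subgroup of $G$. By hypothesis $w$ has finite width in $H$, so by \cite[Proposition~4.1.2]{Se09} the verbal subgroup $w(H)$ equals the abstract subgroup generated by $H_w$, and in particular $w(H)$ is a \emph{closed} subgroup of $G$ containing $Y$; since $Y$ generates $w(G)$ topologically and $w(H) \subseteq w(G)$, we get $w(H) = w(G) = N$. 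Thus $N = w(H)$ has finite width, say width $m$, meaning every element of $N$ is a product of at most $m$ elements of $H_w \cup H_w^{-1} \subseteq G_w \cup G_w^{-1}$.

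Now I want to conclude that $N$ is finite from $\lvert G_w \rvert < 2^{\aleph_0}$ and the width bound. Since $\lvert G_w \rvert < 2^{\aleph_0}$ and $N$ consists of products of at most $m$ elements drawn from the set $G_w \cup G_w^{-1}$, the cardinality of $N$ is at most $(2\lvert G_w\rvert + 1)^m < 2^{\aleph_0}$. But $N$ is a profinite group, and an infinite profinite group has cardinality at least $2^{\aleph_0}$; therefore $N$ is finite. This cardinality dichotomy is the same phenomenon already exploited in Proposition~\ref{pro:abelian-OK} and Lemma~\ref{lem:fin-conj-cl}, so it is a clean finish rather than a new idea.

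The main obstacle is the identification $w(H) = w(G)$ and the verification that the abstract verbal subgroup of $H$ is closed in $G$: one must be careful that ``$w$ has finite width in $H$'' is a statement about $H$ as a topological group and that \cite[Proposition~4.1.2]{Se09} yields closedness, so that the topological verbal subgroup $w(H)$ and the abstract one coincide and the latter is already all of $N$. Once that bookkeeping is done, the width bound feeds directly into the cardinality count and the proof closes. No delicate group-theoretic input beyond what is quoted is required; the subtlety is entirely in correctly chaining the quoted results.
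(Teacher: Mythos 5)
Your overall strategy coincides with the paper's: apply Lemma~\ref{lem:reduction-to-pro-p-central-exp-p} to get that $N$ is generated by finitely many $w$-values, pass to a finitely generated subgroup where the width hypothesis bites, and finish with the cardinality dichotomy for profinite groups. However, there is a genuine error in the passage to the finitely generated subgroup. You set $H = \langle Y\rangle$, where $Y$ is the finite set of $w$-values generating $N$, and then assert that $w(H)$ contains $Y$. That is unjustified: an element $y\in Y$ has the form $w(g_1,\dots,g_r)$ with $g_1,\dots,g_r\in G$, and these arguments need not lie in $H=\langle Y\rangle$, so $y$ need not be a $w$-value of $H$, nor need it lie in $w(H)$ at all (for $w=[x,y]$, a commutator $[g_1,g_2]$ is in general not in the derived subgroup of the group it generates). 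With that, the key identification $w(H)=N$ collapses. The repair is exactly what the paper does: choose $H$ to be the subgroup generated by the finitely many \emph{arguments} $g_1,\dots,g_d\in G$ occurring in the chosen $w$-values (``we may further suppose that $G$ is finitely generated''); then $Y\subseteq H_w$, the width hypothesis applies to $H$, and \cite[Proposition~4.1.2]{Se09} gives that $w(H)$ equals the abstract subgroup generated by $H_w$, is closed, and contains $Y$, whence $w(H)=N$ and the rest of your argument goes through verbatim.

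Everything else is fine and matches the paper's (more tersely stated) proof: the converse direction is indeed trivial, the width bound $m$ gives $\lvert N\rvert \le (2\lvert G_w\rvert+1)^m < 2^{\aleph_0}$, and an infinite profinite group has cardinality at least $2^{\aleph_0}$, so $N$ is finite.
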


\begin{proof} Suppose that $N/[N,G]N^p$ is finite.  By
  Lemma~\ref{lem:reduction-to-pro-p-central-exp-p}, $N$ is generated
  as a subgroup by finitely many $w$-values.  Thus we may further
  suppose that $G$ is finitely generated.  By our assumptions, $w$ has
  finite width in~$G$.  Hence $\lvert w(G) \rvert < 2^{\aleph_0}$ and
  $w(G)$ is finite.
\end{proof}

We now extend our considerations to general profinite groups~$G$, but
impose a priori the condition that $w(G)$ is generated by finitely
many $w$-values.

\begin{lemma}\label{lem:fin-ab}
  Let $w$ be a group word and let $G$ be a profinite group such that
  $\lvert G_w \rvert < 2^{\aleph_0}$.  Suppose that $w(G)$ is
  generated by finitely many $w$-values.  Then the commutator subgroup
  of $w(G)$ is finite.
\end{lemma}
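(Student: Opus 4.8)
The plan is to route the cardinality hypothesis through Lemma~\ref{lem:fin-conj-cl}, using only that a group word induces maps that commute with automorphisms. Write $N = w(G)$ and, invoking the hypothesis, fix $w$-values $a_1, \ldots, a_k \in G_w$ with $N = \overline{\langle a_1, \ldots, a_k \rangle}$; this finite generating set is the one and only place where the assumption that $w(G)$ is generated by finitely many $w$-values enters. Since $w$ is a group word, every automorphism of $G$ maps $G_w$ into itself; in particular, for each $g \in G$ the conjugate $a_j^{\,g}$ is again a $w$-value. Hence the $G$-conjugacy class of $a_j$ is contained in $G_w$, so it has fewer than $2^{\aleph_0}$ elements, and Lemma~\ref{lem:fin-conj-cl} forces it to be finite. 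Therefore each centraliser $\mathrm{C}_G(a_j)$ is open in~$G$.

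Next I would intersect: $C = \bigcap_{j=1}^k \mathrm{C}_G(a_j)$ is open in $G$, being a finite intersection of open subgroups. Now $C$ centralises the set $\{a_1, \ldots, a_k\}$, hence centralises its topological closure, i.e.\ $C \subseteq \mathrm{C}_G(N)$ (the centraliser of a subset of a profinite group is closed, so once it contains a subset it contains the closed subgroup topologically generated by that subset). Consequently $\mathrm{C}_G(N)$ is open in $G$, and so the centre $\mathrm{Z}(N) = N \cap \mathrm{C}_G(N)$ has finite index in~$N$.

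At this point Schur's theorem finishes the argument: a group whose centre has finite index has finite commutator subgroup. Applied to the abstract group underlying $N$, it yields that $[N,N]$ is finite; a finite subgroup is closed, so $[N,N]$ is exactly the (closed) commutator subgroup of $w(G)$, which is thus finite.

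I do not expect a genuine obstacle here --- the statement is a short structural lemma. The two points deserving a line of care are that one really does need \emph{finitely many} $w$-value generators, so that $C$ stays open, and that at the end the finiteness of $[N,N]$ makes the distinction between the abstract and the topological commutator subgroup irrelevant. Should one wish to avoid citing Schur's theorem, one can instead note that for a finite transversal $T$ of $\mathrm{Z}(N)$ in $N$ every commutator $[x,y]$ with $x,y\in N$ coincides with a commutator of elements of $T$, so $N$ has only finitely many distinct commutators, and then reproduce the elementary counting behind Schur's theorem.
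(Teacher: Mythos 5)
Your proposal is correct and follows essentially the same route as the paper: use Lemma~\ref{lem:fin-conj-cl} on the finitely many $w$-value generators to see that their centralisers are open, intersect to get $\mathrm{C}_G(w(G))$ open, and conclude by Schur's theorem. You merely spell out a few steps the paper leaves implicit (that conjugates of $w$-values are $w$-values, that centralising the generators gives centralising the closed subgroup, and that finiteness removes the abstract-versus-topological distinction for the commutator subgroup).
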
 

\begin{proof}
  Suppose that $w(G) = \langle g_1, \ldots, g_{\kr} \rangle$ for
  $g_1, \ldots, g_{\kr} \in G_w$.  Lemma~\ref{lem:fin-conj-cl} implies
  that $\mathrm{C}_G(g_1)$, \ldots, $\mathrm{C}_G(g_{\kr})$ are open
  in~$G$.  Therefore
 $\mathrm{C}_G(w(G)) = \bigcap_{i=1}^n \mathrm{C}_G(g_i)
    \le_\mathrm{o} G$
  and $G/\mathrm{C}_G(w(G))$ is finite.  By Schur's Theorem (see
  \cite[p.~102]{Ro72}), the commutator subgroup of $w(G)$ is finite.
\end{proof}

  For $n,k \in \mathbb{N}_0$, the $n$th power of the
  derived word $\delta_k$ is written as~$\delta_k^{\, n}$.  We say
  that a quantity is $(a,b,c,\ldots)$-bounded if it can be bounded
  from above by a number depending only on the specified parameters
  $a,b,c,\ldots$. 

\begin{lemma}\cite[Lemma~3.2]{Sh02}\label{lem:3.2shu2}
  Let $k,n,t \in \mathbb{N}$.  Let $G$ be a group satisfying
  $\delta_k^{\, n}(G) = 1$.  Let $H$ be a nilpotent subgroup of $G$
  generated by a set of $\delta_k$-values and suppose, in addition,
  that $H$ is $t$-generated. Then the order of $H$ is
  $(k,n,t)$-bounded.
\end{lemma}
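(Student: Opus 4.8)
The plan is to prove Lemma~\ref{lem:3.2shu2} by a double reduction: first pass from $G$ to the subgroup $H$ itself, and then inside $H$ exploit nilpotency together with the law $\delta_k^{\,n}=1$ to bound the order. Since $H$ is generated by $\delta_k$-values of $G$ and each $\delta_k$-value of $G$ lying in $H$ is in particular a product of elements of $H$, there is no loss in replacing $G$ by $H$ and assuming $H=G$, so that $G$ is nilpotent, $t$-generated, generated by $\delta_k$-values, and satisfies $\delta_k^{\,n}(G)=1$; one must then bound $\lvert G\rvert$ in terms of $k,n,t$ alone.

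First I would dispose of the abelian-by-bounded part. Because $G$ is nilpotent and $t$-generated, its abelianisation $G/G'$ is a $t$-generated abelian group; the hypothesis $\delta_k^{\,n}(G)=1$ does not immediately constrain $G/G'$, so the key point is that $G$ is \emph{generated by $\delta_k$-values}. For $k\ge 1$ every $\delta_k$-value lies in $G'$ (indeed in $G^{(k-1)}$ when $k\ge1$, and $\delta_0$-values are arbitrary elements, so the case $k=0$ must be handled separately or is excluded since the statement takes $k\in\mathbb N$). For $k\ge1$ this forces $G=G'$, and a nilpotent group equal to its own derived subgroup is trivial, so in fact $H=1$ unless $k=0$; thus the genuine content is the case that matters in the applications, where one works with $\delta_k$-values of a possibly larger group and $H$ itself need not be perfect — here I would instead argue that each of the finitely many chosen $\delta_k$-generators of $H$ has, by the defining property of being a $\delta_k$-value in $G$, controlled behaviour, and combine this with nilpotency of $H$ to bound the nilpotency class of $H$ in terms of $k$ and $n$ (a $\delta_k^{\,n}$-law on a nilpotent group bounds its class, by a standard commutator-collection argument à la Zelmanov/Burns–Medvedev, or more elementarily since $\delta_k^{\,n}=1$ in a nilpotent group implies a bound on the class of $H$).

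The main engine of the proof is therefore: a $t$-generated nilpotent group $H$ of $(k,n)$-bounded nilpotency class which satisfies $\delta_k^{\,n}(H)=1$ and is generated by $\delta_k$-values has $(k,n,t)$-bounded order. Once the class $c$ of $H$ is bounded in terms of $k,n$, I would induct down the lower central series: each factor $\gamma_i(H)/\gamma_{i+1}(H)$ is generated by a $(t,c)$-bounded number of basic commutators in the $t$ generators, so it suffices to bound the exponent of each such factor by a $(k,n)$-bounded number. For the exponent bound one uses that $\delta_k^{\,n}(H)=1$: the $\delta_k$-values generate $H$, so products of $\delta_k$-values hitting any given lower-central-series factor must be killed by the $n$th power map modulo the next term, and a collection argument converts the law $\delta_k^{\,n}=1$ into a bound on the exponent of $H/\gamma_2(H)$ first and then, using that the class is bounded, on the exponent of each $\gamma_i(H)/\gamma_{i+1}(H)$.

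The hard part will be making the exponent bound on the top factor $H/\gamma_2(H)$ honest: one needs to extract from "$H$ is generated by $\delta_k$-values and $\delta_k^{\,n}(H)=1$" that the abelianisation has $(k,n)$-bounded exponent, which is precisely the kind of statement that in \cite{Sh02} is proved by a careful analysis of $\delta_k$-values modulo terms of the lower central series and an appeal to the fact that in a nilpotent group the set of $\delta_k$-values modulo $\gamma_{i+1}$ behaves multilinearly enough for the $n$th-power law to propagate. I would cite the relevant combinatorial lemma on $\delta_k$-values in nilpotent groups (or reprove it by induction on the class), and then the order bound follows by multiplying the bounded exponents over the boundedly many bounded-rank factors of the lower central series. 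Everything else — the reduction $G\rightsquigarrow H$, the class bound from the law — is routine and I would not grind through it.
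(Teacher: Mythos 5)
First, a point of comparison: the paper does not prove this statement at all -- it is imported verbatim as \cite[Lemma~3.2]{Sh02} -- so there is no in-paper argument to match yours against; the question is only whether your proposal stands on its own. It does not, for a concrete reason. Your opening reduction (replace $G$ by $H$) destroys the one hypothesis that carries all the content, namely that the generators of $H$ are $\delta_k$-values \emph{of the ambient group} $G$. You notice this yourself (the reduction would force $H=H'$ and hence $H=1$), but the proposed repair -- that the generators have ``controlled behaviour'' -- is not an argument. Worse, the fallback claim in your parenthesis, that the law $\delta_k^{\,n}\equiv 1$ on a nilpotent $t$-generated group bounds its class and then its order, is untenable: since $\delta_k(F)\le\gamma_{2^k}(F)$, \emph{every} nilpotent group of class at most $2^k-1$ satisfies $\delta_k^{\,n}\equiv 1$ identically, including the free nilpotent group of class $2^k-1$ on $t\ge 2$ generators, which is infinite and torsion-free. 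So no combination of ``nilpotent, $t$-generated, satisfies $\delta_k^{\,n}\equiv 1$'' can yield a finite order bound; the hypothesis that $H$ is generated by $\delta_k$-values of $G$ must be used throughout, and your argument never genuinely does so.

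The mechanism you are missing is that the set of $\delta_k$-values of $G$ is symmetric, closed under conjugation, and closed under commutation: $[u,v]$ with $u,v$ $\delta_k$-values is a $\delta_{k+1}$-value, and $\delta_{k+1}=\delta_k\circ(\delta_1,\dots,\delta_1)$ shows every $\delta_{k+1}$-value is again a $\delta_k$-value. Hence every iterated commutator in the chosen generators of $H$, and every conjugate thereof, is a $\delta_k$-value of $G$ and so has order dividing $n$. This makes the step you flag as ``the hard part'' (exponent $n$ for $H/\gamma_2(H)$ and for each lower central factor) essentially immediate, and it inverts your assessment: the genuinely hard point, which your proposal never addresses, is why the nilpotency class (equivalently the length of the lower central series) of $H$ is $(k,n,t)$-bounded -- without that, the estimate $\lvert\gamma_i(H)/\gamma_{i+1}(H)\rvert\le n^{t^i}$ gives a bound depending on the class $c$, and the argument is circular ($c$ bounds the derived length $l$, $l$ bounds the exponent, the exponent via the restricted Burnside problem bounds $c$). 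Closing that loop is the substance of Shumyatsky's proof and cannot be dismissed as ``routine''; as written, your proposal has a genuine gap exactly there.
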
 

\begin{lemma}\cite[Lemma~2.1]{FeSh18}\label{lem:2.1FeSh}
  Let $d, k \in \mathbb{N}$.  There exists a number $t=t(d,k)$,
  depending on $d$ and $k$ only, such that, if $G$ is a finite
  $d$-generated group, then every $\delta_{k-1}$-value in elements of
  $G'$ is a product of at most $t$ elements that are $\delta_k$-values
  in elements of~$G$.
\end{lemma}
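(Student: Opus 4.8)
The plan is to reduce the statement to the uniform bound on commutator width in finitely generated finite groups. By the uniform-bounds results of Nikolov and Segal there is a function $f$ such that, in every $d$-generated finite group $G$, each element of the derived subgroup $G'$ is a product of at most $f(d)$ commutators $[a,b]$ with $a,b \in G$. This is the one genuinely deep ingredient---it ultimately relies on the classification of finite simple groups---and it is the step I expect to be the main obstacle; granting it, the remainder is a mechanical expansion. So fix a $d$-generated finite group $G$, put $m = 2^{k-1}$, let $h = \delta_{k-1}(y_1,\ldots,y_m)$ be a $\delta_{k-1}$-value with every $y_i \in G'$, and write each $y_i = c_{i,1}\cdots c_{i,f(d)}$ as a product of commutators in~$G$.

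Next I would use that $\delta_{k-1}$, being a multilinear commutator word, is linear in each of its arguments up to conjugation: replacing the entry in one slot by a product $c_1\cdots c_f$ rewrites the $\delta_{k-1}$-value as a product of $f$ $\delta_{k-1}$-values, in each of which that slot carries a single $c_a$ (up to conjugation) while the remaining slots carry conjugates of their former entries. This follows by induction on the bracketing from the identities $[bc,d] = [b,d]^{c}[c,d]$ and $[d,bc] = [d,c][d,b]^{c}$, together with the fact that a conjugate of a $\delta_{k-1}$-value is again a $\delta_{k-1}$-value. Expanding the first slot of $h$ according to $y_1 = c_{1,1}\cdots c_{1,f(d)}$, then the second slot of each resulting factor, and so on through all $m$ slots---using at each stage that a conjugate of an element of $G'$ is again a product of at most $f(d)$ commutators and that a conjugate of a commutator is a commutator---one obtains that $h$ is a product of at most $f(d)^{m}$ elements, each of which is a $\delta_{k-1}$-value $\delta_{k-1}(d_1,\ldots,d_m)$ with every $d_i$ a commutator in~$G$.

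It remains to recognise such a value as a $\delta_k$-value. The commutator-bracketing tree of $\delta_k$ is the full binary tree of depth $k$, which is obtained from the depth-$(k-1)$ tree of $\delta_{k-1}$ by grafting a copy of $\delta_1 = [x,x']$ at each of its $m = 2^{k-1}$ leaves; reading off the leaves from left to right yields the identity $\delta_{k-1}([a_1,b_1],\ldots,[a_m,b_m]) = \delta_k(a_1,b_1,a_2,b_2,\ldots,a_m,b_m)$. Hence every one of the at most $f(d)^{m}$ factors produced above is a $\delta_k$-value in elements of~$G$, and the lemma holds with $t(d,k) = f(d)^{2^{k-1}}$. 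The degenerate case $k = 1$ is already covered by this reasoning: there $h$ is simply an element of $G'$ and the assertion is precisely the commutator-width bound. Beyond the Nikolov--Segal input, the only point needing care is the bookkeeping of conjugations in the multilinear expansion, which is routine.
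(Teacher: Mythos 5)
Your argument is correct; note that the paper does not prove this lemma but cites it from Fern\'andez-Alcober and Shumyatsky \cite{FeSh18}, whose proof is essentially the one you reconstruct: the Nikolov--Segal bound on commutator width in $d$-generated finite groups, followed by the multilinear expansion of $\delta_{k-1}$ via $[bc,d]=[b,d]^c[c,d]$ and the identity $\delta_{k-1}([a_1,b_1],\ldots,[a_m,b_m])=\delta_k(a_1,b_1,\ldots,a_m,b_m)$. Your bookkeeping (conjugates of commutators are commutators, conjugates of elements of $G'$ are again products of at most $f(d)$ commutators) and the resulting bound $t(d,k)=f(d)^{2^{k-1}}$ are sound.
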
 

\begin{lemma}\cite[Lemma~2.2]{FeSh18}\label{lem:2.2FeSh}
  Let $G$ be a soluble group of derived length~$l$, and suppose that
  $X$ is a symmetric, normal and commutator-closed set of generators
  for~$G$.  Let $g$ be an arbitrary element of $G$, written as
  $g = x_1 \cdots x_t$, where $x_i \in X$ for all
  $i \in \{1, \ldots t\}$.  Then, for every $n \in \mathbb{N}$, we
  have
  \[
  g^{n^l} = y_1^{\, n} \cdots y_s^{\, n},
  \]
  where $y_1, \ldots, y_s \in X$ and $s$ is $(n,t,l)$-bounded. 
\end{lemma}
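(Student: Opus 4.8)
The plan is to argue by induction on the derived length $l$ of $G$; it is convenient to prove the statement for every soluble group of derived length \emph{at most} $l$. The base case $l=1$ is immediate, for then $G$ is abelian and $g^{\,n}=x_1^{\,n}\cdots x_t^{\,n}$, so one may take $s=t$.

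For the inductive step suppose $l\ge 2$ and that the result holds for soluble groups of derived length less than $l$. Put $N=[G,G]$; it is soluble of derived length at most $l-1$, and $Y:=X\cap N$ is again symmetric, normal and commutator-closed, with $\langle Y\rangle=N$ — the latter because $N$ is generated by the commutators $[u,v]$ with $u,v\in X$ (as $X$ is a normal generating set), and each such commutator lies in $Y$. I would first run a collection process on $g^{\,n}=(x_1\cdots x_t)^{\,n}$, viewed as a product of $nt$ elements of $X$: each transposition of adjacent letters $p,q\in X$ rewrites $pq$ as $[p^{-1},q^{-1}]\,qp$, producing a single commutator which lies in $Y$ (here the symmetry and commutator-closedness of $X$ are used), and this commutator can be moved to the left past the remaining letters, being conjugated but staying inside $Y$ since $Y$ is conjugation-invariant. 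Bringing the letters with equal index together in this way gives
\[
g^{\,n}=\gamma\cdot a,\qquad a:=x_1^{\,n}\cdots x_t^{\,n},
\]
where $\gamma\in N$ is a product of at most $\binom{nt}{2}$ elements of $Y$.

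Then I would raise to the power $n^{l-1}$ and use the identity $(\gamma a)^{m}=\gamma^{m}\,a^{\gamma^{m-1}}a^{\gamma^{m-2}}\cdots a^{\gamma}\,a$ with $m=n^{l-1}$:
\[
g^{\,n^l}=(g^{\,n})^{\,n^{l-1}}=(\gamma a)^{\,n^{l-1}}=\gamma^{\,n^{l-1}}\cdot a^{\gamma^{n^{l-1}-1}}a^{\gamma^{n^{l-1}-2}}\cdots a^{\gamma}\,a .
\]
The leading factor $\gamma^{\,n^{l-1}}$ is handled by the induction hypothesis applied to $N$: since $N$ has derived length at most $l-1$, $Y$ is a generating set of the required type, and $\gamma$ is written as a product of at most $\binom{nt}{2}$ elements of $Y$, the element $\gamma^{\,n^{l-1}}$ is a product of $(n,t,l)$-boundedly many elements $y^{\,n}$ with $y\in Y\subseteq X$. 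Each of the remaining $n^{l-1}$ factors is a conjugate $a^{\delta}$ of $a$ by some $\delta\in N$, and since conjugation is an automorphism and $X$ is normal,
\[
a^{\delta}=(x_1^{\,n}\cdots x_t^{\,n})^{\delta}=(x_1^{\,\delta})^{n}\cdots(x_t^{\,\delta})^{n}
\]
with each $x_i^{\,\delta}\in X$; so these contribute $t\,n^{l-1}$ further factors of the form $y^{\,n}$, $y\in X$. Altogether $g^{\,n^l}$ is a product of $(n,t,l)$-boundedly many elements $y^{\,n}$ with $y\in X$, which closes the induction.

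The delicate point — and the one I would take most care over — is to arrange the correction term $\gamma$ on the \emph{left} of $a=x_1^{\,n}\cdots x_t^{\,n}$ in the factorisation of $g^{\,n}$. This is precisely what ensures that, after raising to the power $n^{l-1}$, the leading factor is a genuine $n^{l-1}$-st power of a bounded-length element of $N$, so that the induction hypothesis for $N$ applies, while the conjugates $a^{\delta}$ split off automatically into $n$-th powers of elements of $X$. With the correction placed on the right one is instead led to a residual element of $N$ that is not visibly an $n^{l-1}$-st power, and the induction stalls; so the order in the factorisation, together with the commutator-closedness of $X$ that keeps the collection commutators (and their conjugates) inside $Y=X\cap N$, is where the real content lies. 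Everything else is the routine collection bookkeeping already employed, for instance, in the proof of Lemma~\ref{basic-step}.
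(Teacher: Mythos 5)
This lemma is quoted from \cite[Lemma~2.2]{FeSh18} and the paper supplies no proof of its own, so there is no in-text argument to compare against; your proof is correct and self-contained, and it follows the same standard route as the cited source: induction on the derived length, collecting $(x_1\cdots x_t)^n$ modulo the commutator subgroup into $\gamma\cdot x_1^{\,n}\cdots x_t^{\,n}$ with $\gamma$ an $(n,t)$-boundedly long product of elements of $X\cap G'$, and then expanding $(\gamma a)^{n^{l-1}}$. The points that genuinely need checking -- that $X\cap G'$ inherits symmetry, normality and commutator-closedness and generates $G'$; the identity $pq=[p^{-1},q^{-1}]\,qp$; and the placement of $\gamma$ on the \emph{left}, so that the inductive hypothesis applies to $\gamma^{\,n^{l-1}}$ while each conjugate $a^{\gamma^{j}}$ splits into $n$th powers of elements of $X$ -- are all handled correctly, and the resulting bound $s\le C\bigl(n,\binom{nt}{2},l-1\bigr)+tn^{l-1}$ is indeed $(n,t,l)$-bounded.
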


  \begin{prop} \label{pro:delta-k-q-result} Let $w = \delta_k^{\, n}$,
    where $k, n \in \mathbb{N}_0$.  Let $G$ be a profinite group such
    that $\lvert G_w \rvert < 2^{\aleph_0}$.  Suppose that the $k$th
    derived subgroup $G^{(k)}$ is pronilpotent and that $w(G)$ is
    finitely generated.  Then $w(G)$ is finite.
\end{prop}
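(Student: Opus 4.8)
The plan is to peel off a sequence of standard structural reductions until a single genuinely hard sub‑case remains, which is pro‑$p$ and is where the combinatorial Lemmas~\ref{lem:2.1FeSh}, \ref{lem:2.2FeSh} and~\ref{lem:3.2shu2} are designed to be used. Put $N=w(G)$. The set $G_w$, being the continuous image of the compact space $G^{2^k}$, is closed in~$G$; and since $N$ is finitely generated, a compactness argument shows that $N$ is in fact generated by finitely many $w$-values (if $d$ is the rank of $N$, then the $d$-tuples in $G_w^{\,d}$ that topologically generate $N$ form the intersection of a family of non‑empty clopen subsets of $G_w^{\,d}$ with the finite intersection property). Hence $[N,N]$ is finite by Lemma~\ref{lem:fin-ab}, and, passing to $G/[N,N]$, we may assume $N$ is abelian; all hypotheses survive. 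As $N$ is an abelian subgroup of the pronilpotent group $G^{(k)}$, it is the direct product $N=\prod_pN_p$ of its Sylow subgroups, and it suffices to show that each $N_p$ is finite and that $N_p=1$ for all but finitely many~$p$.

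For $p\nmid n$ the map $x\mapsto x^n$ is a bijection on every pro‑$n'$ group; applying this inside the procyclic closures of the Sylow $p$-subgroup of $G^{(k)}$ shows that $N_p$ equals that Sylow subgroup, so $\prod_{p\nmid n}N_p$ is the Hall $n'$-subgroup of $G^{(k)}$, which is normal in~$G$. Passing to the quotient $\widetilde G$ of $G$ by the Hall $n$-subgroup of $G^{(k)}$, we get $\widetilde G^{(k)}=\delta_k(\widetilde G)=w(\widetilde G)=\prod_{p\nmid n}N_p$, a pro‑$n'$ group on which $x\mapsto x^n$ is injective; hence the $\delta_k$-values of $\widetilde G$ are in bijection, via $n$th powers, with the $w$-values of $\widetilde G$, so $\widetilde G$ has fewer than $2^{\aleph_0}$ distinct $\delta_k$-values. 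Since $\delta_k$ is a multilinear commutator word, Theorem~\ref{thm:mcw-more} gives that $\delta_k(\widetilde G)=\prod_{p\nmid n}N_p$ is finite; in particular almost all of these $N_p$ vanish. It remains to deal with the finitely many primes dividing~$n$, for which no uniformity in the bounds is needed.

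Fix such a prime~$p$. Quotienting $G$ by the Hall $p'$-subgroup of $G^{(k)}$ reduces us to the case where $G^{(k)}$ is pro‑$p$ and $N=w(G)$ is an abelian finitely generated pro‑$p$ group; choosing finitely many $w$-values generating $N$ and replacing $G$ by the finitely generated closed subgroup in which these are evaluated, we may also assume $G$ is finitely generated. Finally, let $p^a$ be the $p$-part of~$n$. In $G/N$ the word $\delta_k^{\,n}$ is identically trivial, so the $\delta_k$-values of $G/N$ are $p$-elements of order dividing $n$, hence of order dividing $p^a$; this gives $\delta_k^{\,p^a}(G)\le N$, and since $x\mapsto x^{n/p^a}$ is a bijection on the pro‑$p$ group $G^{(k)}$ we obtain $\delta_k^{\,p^a}(G)=N$ and that the set of $\delta_k^{\,p^a}$-values of $G$ still has cardinality below $2^{\aleph_0}$. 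Thus we arrive at the irreducible core: $G$ is finitely generated, $G^{(k)}$ is pro‑$p$, $w=\delta_k^{\,p^a}$, $N=w(G)$ is an abelian finitely generated pro‑$p$ group, and $\lvert G_w\rvert<2^{\aleph_0}$; we must show that $N$ is finite, equivalently that $N$ has finite exponent.

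This core is where I expect the real difficulty and where Lemmas~\ref{lem:2.1FeSh}, \ref{lem:2.2FeSh} and~\ref{lem:3.2shu2} enter; I would attack it by induction on~$k$, the case $G^{(k)}=1$ being trivial and the base case $k=0$ (the power word $x^{p^a}$ on a pro‑$p$ group) being handled directly. Lemma~\ref{lem:3.2shu2}, applied in $G/N$ where $\delta_k^{\,p^a}$ vanishes, bounds the order of nilpotent subgroups generated by boundedly many $\delta_k$-values; Lemma~\ref{lem:2.1FeSh} lets one pass between $\delta_k$-values and $\delta_{k-1}$-values in the derived subgroups of the finitely generated quotients of $G$; and Lemma~\ref{lem:2.2FeSh} rewrites large powers in soluble groups as products of boundedly many $p^a$th powers, which is what keeps track of $w$-values under the inductive step. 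The strategy is then to show that if $N$ were infinite — so that, after discarding its finite torsion part, $N\cong\mathbb{Z}_p^{\,b}$ with $b\ge1$ — one could perturb, within an open subgroup, a $\delta_k$-value whose $p^a$th power lies in a $\mathbb{Z}_p$-basis of $N$, in such a way (using the multilinearity of $\delta_k$ to produce the perturbations and Lemma~\ref{lem:2.2FeSh} to control the resulting powers) that the associated $w$-values range over an uncountable set; Proposition~\ref{pro:instead-of-Baire} then yields $\lvert G_w\rvert\ge2^{\aleph_0}$, a contradiction. The hard part will be to carry out this perturbation analysis so as to match precisely the hypotheses of Lemmas~\ref{lem:2.1FeSh}--\ref{lem:3.2shu2} (finite generation, commutator‑closed generating sets, nilpotency), and, above all, to bridge correctly between the finite quotients of $G$ and $G$ itself.
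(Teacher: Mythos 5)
The decisive problem is that you never actually prove the core case. After your reductions you arrive at: $G$ finitely generated, $G^{(k)}$ pro-$p$, $w=\delta_k^{\,p^a}$, $N=w(G)$ an abelian finitely generated pro-$p$ group --- and at that point you only describe a strategy (a ``perturbation analysis'' that would produce $2^{\aleph_0}$ distinct $w$-values from a $\delta_k$-value contributing to a $\mathbb{Z}_p$-basis of $N$) and explicitly defer ``the hard part''. That is exactly where the paper's proof does its real work, and it runs quite differently: an induction on $k$ nested inside the induction on $n$. Concretely, Lemma~\ref{lem:2.1FeSh} produces $t=t(d,k)$ so that every $\delta_{k-1}$-value in elements of $G'$ is a product of at most $t$ $\delta_k$-values of $G$; Lemma~\ref{lem:3.2shu2}, applied to finite quotients of $G/w(G)$, shows that any subgroup $H$ generated by $t$ $\delta_k$-values has $H/(H\cap w(G))$ of $(k,n,t)$-bounded order, hence $H$ is soluble of bounded derived length~$l$; Lemma~\ref{lem:2.2FeSh} then shows that every $v$-value, for $v=\delta_{k-1}^{\,n^l}$, in elements of $G'$ is a bounded product of $w$-values, whence $\lvert (G')_v\rvert<2^{\aleph_0}$ and induction on $k$ makes $v(G')$ finite; factoring it out, all $\delta_k$-values of $G$ have finite order and the abelian finitely generated $N$, being topologically generated by torsion elements, is finite. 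None of this is reconstructed in your proposal, and the alternative you sketch is not substantiated --- there is no argument that the proposed perturbations stay inside $G_w$ and yield uncountably many values rather than finitely many.

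A secondary, but genuine, error: your opening reduction is false as stated. For a profinite group $N$ of finite rank $d$ topologically generated by a closed subset $X$, it is not true that some $d$-tuple (or any finite tuple) from $X$ generates $N$: take $N=\prod_p C_p$, which is procyclic, generated by the closed set $\{e_p \mid p\}\cup\{1\}$ where $e_p$ generates the $p$-factor, yet by no finite subset of it. Your finite-intersection-property argument breaks because the clopen sets ``tuples in $G_w^{\,d}$ generating $N$ modulo $K$'' may be empty. The paper avoids this by performing the reduction only after $w(G)$ is known to be pro-$p$, where the claim follows from the Frattini quotient; you should reorder your reductions accordingly (and only then invoke Lemma~\ref{lem:fin-ab}). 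On the positive side, your treatment of the primes not dividing $n$ --- quotienting by the Hall $\pi(n)$-part of $G^{(k)}$, observing that $x\mapsto x^n$ is injective on the resulting pro-$n'$ group so that $\delta_k$-values biject with $w$-values, and invoking Theorem~\ref{thm:mcw-more} --- is correct and is a tidy alternative to the paper's induction on $n$ for that step; but it does not compensate for the missing core.
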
 

\begin{proof} We argue by induction on~$n$.  For $n\leq1$, the result
  is immediate from Theorem~\ref{thm:mcw-more}.  Now suppose that
  $n\geq2$.  Let $\pi$ be the set of prime divisors of~$n$.  If
  $G^{(k)}$ is a pro-$p'$ group for some $p \in \pi$, then for
  $v = \delta_k^{\, n/p}$ the map $x \mapsto x^p$ provides a bijection
  from $G_v$ onto $G_w$ and, by induction $w(G) = v(G)$ is finite.
  Hence, we may suppose that $G^{(k)}$ has non-trivial Sylow pro-$p$
  subgroup for each $p \in \pi$.  Moreover, if $P$ is the Sylow
  pro-$p$ subgroup of $G^{(k)}$ for some $p\in\pi$, then the image of
  $w(G)$ in $G/P$ is finite.  Suppose that $\lvert \pi \rvert \ge 2$,
  and let $P_1$ and $P_2$ be the Sylow subgroups of $G^{(k)}$ for
  distinct primes $p_1, p_2 \in \pi$.  Then images of $w(G)$ in
  $G/P_1$ and $G/P_2$ are finite, and from $P_1\cap P_2=1$ we deduce
  that $w(G)$ is finite.

  Thus, it is sufficient to deal with the case where $n$ is a
  $p$-power for some prime~$p$.  Passing to the quotient
  $G/O_{p'}(G^{(k)})$, we may suppose that $G^{(k)}$ is a pro\nobreakdash-$p$
  group.  Since $w(G)$ is a finitely generated pro-$p$ group, it is
  actually generated by finitely many $w$-values.  By
  Lemma~\ref{lem:fin-ab}, the commutator subgroup of $w(G)$ is finite.
  Passing to the quotient $G/w(G)'$, we may suppose that $w(G)$ is
  abelian.  If $k=0$, we deduce from Lemma~\ref{lem:not-comm-periodic}
  that $G$ is periodic, hence $w(G)$ is finite.

  Suppose that $k\ge1$.  Since $w(G)$ is generated by finitely many
  $w$-values, we may choose finitely many elements
  $g_1,\dots,g_d\in G$ such that
  $w(\langle g_1,\dots,g_d\rangle)=w(G)$. It is sufficient to work
  with $\langle g_1,\dots,g_d\rangle$ in place of $G$ and so without
  loss of generality we suppose that $G$ is finitely generated, by $d$
  elements, say.  By Lemma~\ref{lem:2.1FeSh} there exists a number
  $t=t(d,k)$, depending on $d$ and $k$ only, such that every
  $\delta_{k-1}$-value in elements of $G'$ is a product of at most $t$
  elements which are $\delta_{k}$-values in elements of $G$.

  Consider a subgroup $H = \langle x_1, \ldots , x_t\rangle$, where
  $x_1, \ldots, x_t$ are $\delta_k$-values in~$G$.  By
  Lemma~\ref{lem:3.2shu2}, applied to finite quotients of $G/w(G)$,
  every finite quotient of $H/(H \cap w(G))$ and hence the entire
  group $H/(H \cap w(G))$ is finite of $(k,n,t)$-bounded order.  In
  particular, $H$ is soluble of derived length at most $l$, where
  $l=l(k,n,t)$ depends on $k, n, t$ only.

  Set $v= (\delta_{k-1})^{n^l}$. 
  By Lemma~\ref{lem:2.2FeSh}, every
  $v$-value in elements of $G'$ is a product of an $(n,t,l)$-bounded
  number of $w$-values and inverses of $w$-values.  This gives
  $\lvert (G')_v \rvert < 2^{\aleph_0}$ and, by induction on $k$, the
  verbal subgroup $v(G')$ is finite.
 
  Passing to the quotient $G/v(G')$, we may suppose that
  $\delta_{k-1}$-values in elements of $G'$ are of finite order.  Then
  also $\delta_k$-values in elements of $G$ are of finite order.  As
  $w(G)$ is abelian and finitely generated, we conclude that $w(G)$ is
  finite.
\end{proof}

Recall that a group word $w$ is a \emph{law} in a group $G$ if
$w(G)=1$.  We say that $w$ \emph{implies virtual nilpotency} if every
finitely generated metabelian group for which $w$ is a law has a
nilpotent subgroup of finite index.  Burns and Medvedev~\cite{BuMe03}
showed that if $w$ implies virtual nilpotency, then for a much larger
class of groups~$G$, including all finitely generated residually
finite groups, $w(G) = 1$ implies that $G$ is
nilpotent-by-finite. Moreover, the word $w$ implies virtual nilpotency
if and only if, for all primes $p$, the word $w$ is not a law in the
wreath product $C_p \wr C_\infty$ of the cyclic group of order $p$ by
the infinite cyclic group; see~\cite{BuMe03}.  In particular, every
word of the form $uv^{-1}$, where $u$ and $v$ are positive words
(i.e.\ semigroup words in finitely many free generators), implies
virtual nilpotency.  Furthermore, by a result of
Gruenberg~\cite{Gr53}, all Engel words imply virtual nilpotency.
Other examples of words implying virtual nilpotency include
generalisations of Engel words, such as words of the form
$w = w(x,y) = [x^{e_1},y^{e_2},\ldots,y^{e_{\kr}}]$, where
$\kr \in \mathbb{N}$ and
$e_1,\ldots,e_{\kr} \in \mathbb{Z} \smallsetminus \{0\}$. To see that
such a word implies virtual nilpotency, we employ the criterion of
Burns and Medvedev.  The case $r=1$ is easy; now suppose that
$r \ge 2$.  Let $p$ be a prime and consider the wreath product
\[
C_p \wr C_\infty = \langle a, t \mid a^p = 1, [a^{t^i},a^{t^j}] = 1
\text{ for } i,j \in \mathbb{Z} \rangle
\overset{\cong}{\longrightarrow} \mathbb{F}_p[T,T^{-1}] \rtimes
\langle T \rangle,
\]
where $a \mapsto 1 \in \mathbb{F}_p[T,T^{-1}]$ in the base group and
$t \mapsto T \in \langle T \rangle \le \mathbb{F}_p[T,T^{-1}]^*$ in
the top group.  We may suppose that~$e_1>0$.  Then the indicated
isomorphism maps $w(ta,t)$ to
\[
(T^{e_1 - 1} + \ldots + T + 1)(T^{e_2} - 1) \cdots (T^{e_r} - 1) \ne
0,
\]
in the base group.  Thus $w(ta,t) \ne 1$ and $w$ is not a law in
$C_p \wr C_\infty$.

\begin{prop} \label{pro:virt-nilp}
  Let $w$ be a word implying virtual nilpotency and let $G$ be a
  profinite group such that $\lvert G_w \rvert < 2^{\aleph_0}$. If the
  verbal subgroup $w(G)$ is generated by finitely many $w$-values,
  then $w(G)$ is finite.
\end{prop}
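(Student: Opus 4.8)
The plan is to reduce $G$ to a finitely generated nilpotent-by-finite profinite group, where $w$ automatically has finite width, and then to finish by a cardinality argument.

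First I would make three harmless reductions. Since $w(G)$ is generated by finitely many $w$-values, Lemma~\ref{lem:fin-ab} shows that $w(G)'$ is finite; passing to $G/w(G)'$ — which does not increase $\lvert G_w\rvert$ and keeps $w(G)$ generated by finitely many $w$-values — I may assume that $N:=w(G)$ is abelian. Secondly, picking finitely many elements of $G$ among whose $w$-values a generating set of $N$ occurs and replacing $G$ by the closed subgroup they generate (which still contains those $w$-values and has $\lvert G_w\rvert$ no larger), I may assume $G$ is finitely generated. Thirdly, if $g_1,\dots,g_d\in G_w$ generate $N$, then each $g_i$ has conjugacy class contained in $G_w$, hence finite by Lemma~\ref{lem:fin-conj-cl}, so $C:=\mathrm{C}_G(N)=\bigcap_i\mathrm{C}_G(g_i)$ is an open normal subgroup of $G$, and since $N$ is abelian we have $N\le\mathrm{Z}(C)$.

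Next I would use the hypothesis that $w$ implies virtual nilpotency. As $w(C)\le w(G)=N$ and $N\le C$, the word $w$ is a law in the finitely generated profinite — hence finitely generated residually finite — group $C/N$, so by the Burns--Medvedev theorem~\cite{BuMe03} the group $C/N$ is nilpotent-by-finite; since $N$ is central in $C$, this forces $C$, and therefore $G$, to be nilpotent-by-finite. Thus $G$ has an open normal subgroup $M$ that is a finitely generated nilpotent profinite group. The crucial input is then that every word, and in particular $w$, has finite width in $G$: indeed $M$ is the direct product of its finitely many non-trivial Sylow pro-$p$ subgroups, each of which is a finitely generated nilpotent — hence compact $p$-adic analytic — pro-$p$ group, and such groups are verbally elliptic by~\cite{Ja08}; verbal ellipticity passes to finite direct products and then to the finite extension $G\ge M$ (cf.~\cite{Se09}), so some $m\in\mathbb{N}$ is a width for $w$ in $G$. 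Consequently $\lvert w(G)\rvert\le(2\lvert G_w\rvert)^m<2^{\aleph_0}$, and as $w(G)$ is a closed subgroup of the profinite group $G$ it must be either finite or of cardinality at least~$2^{\aleph_0}$; hence $w(G)$ is finite. Undoing the first reduction, a passage to a quotient by the finite group $w(G)'$, gives that $w(G)$ is finite in general.

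The main obstacle in this plan is the finite-width step: it is not purely formal that verbal ellipticity survives the passage from the nilpotent part $M$ to the finite extension $G$ uniformly in the word. If a clean reference for that is unavailable, I would instead argue directly: in each $p$-adic analytic Sylow subgroup of $M$ pass to a uniform open characteristic subgroup $U_p$; in logarithmic coordinates the restriction of $w$ to $U_p$ is given by an everywhere-convergent power series map, so by Proposition~\ref{pro:instead-of-Baire} together with the identity theorem for $p$-adic power series it is either nowhere locally constant — impossible, since $\lvert (U_p)_w\rvert\le\lvert G_w\rvert<2^{\aleph_0}$ — or constant, whence $w$ is a law on $U:=\prod_pU_p\trianglelefteq_\mathrm{o}G$; a word which is a law on a subgroup of finite index has finite width in the ambient group, and the cardinality argument above then applies verbatim. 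A secondary point is that the reductions of the first paragraph be mutually compatible, which they are, each one only passing to a quotient by a finite normal subgroup or to a closed subgroup retaining the relevant $w$-values. (One can alternatively run the endgame through Corollary~\ref{cor:reduction-to-pro-p-central-exp-p}, after the further reductions used in the proof of Proposition~\ref{pro:delta-k-q-result} that shrink $N$ to a torsion-free pro-$p$ group, thereby trading the finite-width input for the same verbal-width hypothesis in finitely generated subgroups.)
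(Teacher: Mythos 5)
Your argument follows the same skeleton as the paper's proof: reduce to a (topologically) finitely generated $G$, use Lemma~\ref{lem:fin-ab} to pass to the case where $w(G)$ is abelian, apply the Burns--Medvedev theorem to the quotient in which $w$ is a law, and finish by combining finite verbal width with the fact that an infinite closed subgroup of a profinite group has cardinality at least $2^{\aleph_0}$. The step you single out as the ``main obstacle'' is exactly the point the paper dispatches by a direct citation: since $G/w(G)$ is nilpotent-by-finite and $w(G)$ is abelian, $G$ is finitely generated abelian-by-nilpotent-by-finite, and \cite[Theorem~4.1.5]{Se09} gives finite width of every word in such groups; your centraliser argument upgrading this to genuinely nilpotent-by-finite is correct (a central extension of a nilpotent group is nilpotent) but unnecessary. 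Two cautions about your fallback route: a topologically finitely generated nilpotent profinite group may have non-trivial Sylow pro-$p$ subgroups for infinitely many primes (e.g.\ the procyclic group $\prod_p C_p$), so the reduction to finitely many $p$-adic analytic factors fails, $U=\prod_p U_p$ need not be open, and uniform verbal ellipticity does not pass formally through an infinite direct product. Since the fallback is never needed once the correct reference is in hand, the proof stands as essentially the paper's.
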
 

\begin{proof}
  Without loss of generality we may assume that $G$ is finitely generated. Using Lemma~\ref{lem:fin-ab}, we may further assume that $w(G)$ is abelian.
  Clearly, $w$ is a law in $G/w(G)$. Hence \cite[Theorem~A]{BuMe03}
  shows that $G/w(G)$ is nilpotent-by-finite.  Thus $G$ is
  abelian-by-nilpotent-by-finite.  Every word has finite width in
  every finitely generated abelian-by-nilpotent-by-finite group;
  compare~\cite[Theorem~4.1.5]{Se09}.  
  
  Thus $w(G)$ has less than $2^{\aleph_0}$ elements, hence it is
  finite.
\end{proof}

Following~\cite{GuSh15} we say that a group word $w$ is \emph{weakly
  rational} if for every finite group $G$ and for every positive integer $e$ with $\gcd(e,\lvert G \rvert) = 1$, the set $G_w$
is closed under taking $e$th powers of its elements.
By~\cite[Lemma~1]{GuSh15}, the word $w$ is weakly rational if and only
if for every finite group $G$, every $g \in G_w$ and every
$e \in \mathbb{N}$ with $\gcd(e,\lvert \langle g \rangle \rvert) =1$ we
have $g^e \in G_w$.  According to \cite[Theorem~3]{GuSh15}, the word
$w=[x_1,\ldots,x_{\kr}]^q$ is weakly rational for all
$\kr,q \in \mathbb{N}$.

\begin{prop} \label{pro:weakly-rational} 
  Let $w$ be a weakly rational word and let $G$ be a profinite group
  such that $\lvert G_w \rvert < 2^{\aleph_0}$. If the verbal subgroup
  $w(G)$ is generated by finitely many $w$-values, then $w(G)$ is
  finite.
\end{prop}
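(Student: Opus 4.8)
The plan is to reduce to the case that $w(G)$ is abelian and then to use weak rationality to manufacture continuum-many $w$-values from a single $w$-value of infinite order.

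First I would apply Lemma~\ref{lem:fin-ab}: as $w(G)$ is generated by finitely many $w$-values, its commutator subgroup is finite, so after replacing $G$ by $G/w(G)'$ --- which changes neither $\lvert G_w\rvert$ nor the hypothesis that $w(G)$ is generated by finitely many $w$-values --- I may assume that $N := w(G)$ is abelian. Suppose, for a contradiction, that $N$ is infinite, and write $N = \overline{\langle g_1,\ldots,g_d\rangle}$ with $g_1,\ldots,g_d \in G_w$. If every $g_i$ were of finite order, then $\langle g_1,\ldots,g_d\rangle$ would be a finitely generated abelian torsion group, hence finite, hence closed and equal to $N$, contradicting that $N$ is infinite. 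So some $w$-value $g := g_i$ has infinite order, and $C := \overline{\langle g\rangle}$ is an infinite procyclic group.

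Next I would show that $g^\lambda \in G_w$ for every $\lambda$ in the unit group $\widehat{\mathbb{Z}}^\times$, where $g^\lambda$ denotes the image of $g$ under the natural $\widehat{\mathbb{Z}}$-action on the procyclic group~$C$. Fix such a $\lambda$ and an open normal subgroup $K \trianglelefteq_\mathrm{o} G$. In the finite group $G/K$ the element $gK$ has some order $n$; since $\lambda$ is a unit, its residue modulo $n$ is coprime to $n$, so we may pick a positive integer $e$ with $e \equiv \lambda \pmod n$, and then $\gcd(e,n) = 1$. As $gK \in (G/K)_w$, the characterisation of weak rationality in \cite[Lemma~1]{GuSh15} yields $(gK)^e \in (G/K)_w$. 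Since $g$ is a topological generator of the procyclic group $C$, the quotient map sends $g^\lambda$ to $(gK)^\lambda = (gK)^e$, so $g^\lambda K \in (G/K)_w = G_wK/K$, whence $g^\lambda \in G_wK$. As this holds for every open normal $K$ and $G_w$ is closed (being the continuous image of the compact space $G^r$), we obtain $g^\lambda \in \bigcap_K G_wK = G_w$.

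It remains to observe that $\{\,g^\lambda \mid \lambda \in \widehat{\mathbb{Z}}^\times\,\}$, being precisely the set of topological generators of $C$, has cardinality $2^{\aleph_0}$. Writing $C \cong \prod_p \mathbb{Z}/p^{a_p}\mathbb{Z}$ with exponents $a_p \in \{0,1,2,\ldots,\infty\}$, this set is in bijection with $\prod_p (\mathbb{Z}/p^{a_p}\mathbb{Z})^\times$ --- the factor at $p$ read as $\mathbb{Z}_p^\times$ when $a_p = \infty$ --- and, as $C$ is infinite, either one of the factors already has cardinality $2^{\aleph_0}$ or infinitely many of the factors, all but possibly the one at $p = 2$, have cardinality at least~$2$. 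In either case $\lvert G_w\rvert \ge 2^{\aleph_0}$, contrary to hypothesis, so $w(G)$ must be finite. I expect the main obstacle to lie in the third paragraph: transferring weak rationality, which is an a priori purely finite statement, to powers by arbitrary elements of $\widehat{\mathbb{Z}}^\times$ in the profinite limit; committing to $\widehat{\mathbb{Z}}^\times$ rather than, say, $1 + p\mathbb{Z}_p$ for a single prime is what makes the rest routine.
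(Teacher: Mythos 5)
Your proposal is correct and follows essentially the same route as the paper: reduce to $w(G)$ abelian via Lemma~\ref{lem:fin-ab}, use weak rationality in finite quotients together with the closedness of $G_w$ to show that every topological generator of the procyclic closure of a $w$-value again lies in $G_w$, and then observe that an infinite procyclic group has $2^{\aleph_0}$ generators (your direct computation with $\prod_p(\mathbb{Z}/p^{a_p}\mathbb{Z})^\times$, including the caveat at $p=2$, replaces the paper's Frattini-quotient count, but the content is the same). No gaps.
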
 

\begin{proof}
  By Lemma \ref{lem:fin-ab} we may suppose that $w(G)$ is abelian, and
  it suffices to show that elements of $G_w$ have finite order.
  
  Let $h \in G_w$, and let $g$ be any generator of the procyclic group
  $H = \langle h \rangle$.  For every
  $N \trianglelefteq_\mathrm{o} G$, there exists $e \in \mathbb{N}$
  with $\gcd(e, \lvert HN/N \rvert|) = 1$ such that $g \equiv_N h^e$
  and, because $w$ is weakly rational, we obtain $g \in G_w N$.  Hence
  $g \in \bigcap_{N \trianglelefteq_\mathrm{o} G} G_w N = G_w$.
  Therefore the procyclic group $H$ has less than $2^{\aleph_0}$
  single generators.

  This implies that $H$ is finite. Indeed, consider the Frattini subgroup
  $\Phi(H)$ of~$H$.  Since $\langle g \rangle = H$ for every $g \in h\Phi(H)$, the group $\Phi(H)$ has less than
  $2^{\aleph_0}$ elements.  Hence $\Phi(H)$ is finite, and without
  loss of generality we assume that $\Phi(H) = 1$.  Then
  $H \cong \prod_{p \in\pi} C_p$ for a set of primes~$\pi$.  Each factor
  $C_p$ has $p-1$ single generators.  Since $H$ has less than
  $2^{\aleph_0}$ single generators, $\pi$ and hence $H$ is
  finite.
\end{proof}


\section{Reduction via parametrised words} \label{sec:para-words}

Throughout this section, we fix a profinite group~$G$, a positive
integer $r \in \mathbb{N}$ and a normal subgroup
$\mathbf{G} \trianglelefteq G \times \ldots \times G$ of the direct
product of $r$ copies of~$G$.  A typical situation would be
$\mathbf{G} = G_1 \times \cdots \times G_r$, where
$G_1, \ldots , G_r \trianglelefteq G$.

Our intention is to consider (products of) `parametrised group words'
in variables $x_1, \ldots, x_r$, with parameters coming from~$G$ where
each $x_i$ is intended to take values in~$G_i$ and where we formally
distinguish repeated occurrences of the same variable.  This
elementary concept requires a flexible but precise set-up.

Let $\Omega = \Omega_{G,r}$ be the free group on free generators
\[
\xi_h, \quad \eta_{1,i}, \eta_{2,i}, \ldots, \eta_{r,i} \qquad
\text{for $h \in G$ and $i \in \mathbb{N}$.}
\]
Informally, we think of each free generator $\xi_h$ as a `parameter
variable' that is to take the value $h$ and each free generator
$\eta_{q,i}$ as a `free variable' that can be specialised to~$x_q$,
irrespective of the additional index~$i$.

We refer to elements $\omega \in \Omega$ as \emph{$r$-valent
  parametrised words} for~$G$ or, since $r$ is fixed throughout,
simply as \emph{parametrised words} for~$G$.  For
$\mathbf{g} = (g_1,\ldots,g_r) \in \mathbf{G}$, we write 
\[
\uo(\mathbf{g}) = \uo_\mathbf{G}(\mathbf{g}) = \uo(g_1, \ldots, g_r)
\in G
\]
for the $\omega$-value that results from replacing each $\xi_h$ by $h$
and each $\eta_{q,i}$ by $g_q$, for all $h \in G$,
$q \in \{1,\ldots,r\}$ and $i \in \mathbb{N}$.  In this way we obtain
a parametrised word map $\uo(\cdot) \colon \mathbf{G} \to G$.

The \emph{degree} $\deg(\omega)$ of the parametrised word $\omega$ is
the number of free generators $\eta_{q,i}$, with
$q \in \{1,\ldots,r\}$ and $i \in \mathbb{N}$, appearing in (the
reduced form of)~$\omega$; here we care whether a generator
$\eta_{q,i}$ appears, but not whether it appears repeatedly.  The
degree $\deg(\omega)$ is a non-negative integer and plays a role in
defining appropriate induction parameters.  We remark that, if
$\omega$ has degree~$0$, then the map $\uo( \cdot) $ is constant,
i.e.\ there exists $h \in G$ such that for all
$\mathbf{g} \in \mathbf{G}$ we have $\uo(\mathbf{g}) = h$.

\begin{example} \label{exa:basic-illustration-pw} Our main interest
  will be in iterated commutator words, such as
  $w(x_1,x_2,x_3) = [[x_1,x_2,x_2],[x_2,x_3]]$, and the $w$-values in
  a profinite group~$G$.  We set $r = 3$,
  $\mathbf{G} = G \times G \times G$ and
  $\omega =
  [[\eta_{1,1},\eta_{2,1},\eta_{2,2}],[\eta_{2,3},\eta_{3,1}]]$
  to model~$w$ in the sense that
  \[
  w(g_1,g_2,g_3) = [[g_1,g_2,g_2],[g_2,g_3]] = \uo(g_1,g_2,g_3) \quad
  \text{for all $g_1,g_2,g_3 \in G$.}
  \]
  The $3$-valent parametrised word $\omega$ has degree~$5$; moreover,
  $\omega$ is a multilinear commutator word of weight~$5$ (meaning
  that it involves $5$ variables).  In this example, we are not yet
  using the possibility to involve parameters.
\end{example}

We fix a set $\mathfrak{E} = \mathfrak{E}_{G,r} \subseteq \Omega$ of
$r$-valent parametrised words for~$G$, which we think of as
`elementary' words, and we consider finite products of such.  To write
down these products we use finite index sets
$T, S, \ldots \subseteq \mathbb{N}$ that are implicitly ordered so
that the products are unambiguous in a typically non-commutative
setting.

Formally, an $r$-valent \emph{$\mathfrak{E}$-product} for~$G$ is a
finite sequence $(\epsilon_t)_{t \in T}$, where
$\epsilon_t \in \mathfrak{E}$ for each $t \in T$; more suggestively,
we denote it by
\[
\dotprod\nolimits_{t \in T} \epsilon_t,
\]
where the dot indicates that we consider a formal product and not the
parametrised word that results from actually carrying out the
multiplication in~$\Omega$.

By a \emph{length function} on $\mathfrak{E}$ we mean any map
$\ell \colon \mathfrak{E} \to \mathsf{W}$ from $\mathfrak{E}$ into a
well-ordered set~$\mathsf{W} = (\mathsf{W},\le)$ such that elements
$\epsilon \in \mathfrak{E}$ whose length $\ell(\epsilon)$ is minimal
with respect to $\le$ also have minimal degree $\deg(\epsilon) = 0$.
As usual, we agree that the maximum of the empty subset of
$\mathsf{W}$ is the least element of~$\mathsf{W}$.  A length function
$\ell$ induces a total pre-order $\preceq_\ell$ on the set of all
$r$-valent $\mathfrak{E}$-products, as follows:
\[
\dotprod\nolimits_{s \in S} \widetilde{\epsilon}_s \;\preceq_\ell\;
\dotprod\nolimits_{t \in T} \epsilon_t \qquad \text{if} \quad \max
\{ \ell(\widetilde{\epsilon_s}) \mid s \in S \} \le \max \{ \ell(\epsilon_t) \mid
t \in T \};
\]
we write
$\dotprod_{s \in S} \widetilde{\epsilon}_s \prec_\ell \dotprod_{t \in
  T} \epsilon_t$
if
$\max \{ \ell(\widetilde{\epsilon}_s) \mid s \in S \} < \max \{ \ell(\epsilon_t)
\mid t \in T \}$.
Clearly, there are no infinite descending chains of
$\mathfrak{E}$-products, with respect to~$\prec_\ell$.  This fact
allows us to give the following recursive definition.

\begin{defn}[Friendly products] \label{def:good-pw} Let
  $\ell \colon \mathfrak{E} \to \mathsf{W}$ be a length function.  We
  define recursively the set
  $\mathfrak{F} = \mathfrak{F}_{\mathbf{G},r,\ell}$ of
  \emph{$\ell$-friendly} $r$-valent $\mathfrak{E}$-products for
  $\mathbf{G}$ as follows.  An $r$-valent $\mathfrak{E}$-product
  $\dotprod_{t \in T} \epsilon_t$ for~$G$ belongs to $\mathfrak{F}$ if
  for every $\mathbf{b} \in \mathbf{G}$ there exists an $r$-valent
  $\mathfrak{E}$-product
  $\dotprod_{s \in S(\mathbf{b})} \widetilde{\epsilon}_{\mathbf{b},s}$
  such that
  \begin{enumerate}
  \item[(F1)\ ]
      $\dotprod_{s \in S(\mathbf{b})}
      \widetilde{\epsilon}_{\mathbf{b},s}$
      belongs to $\mathfrak{F}$ and
      $\dotprod_{s \in S(\mathbf{b})} \widetilde{\epsilon}_{\mathbf{b},s} \prec_\ell
      \dotprod_{t \in T} \epsilon_t$ and \\[-8pt]
    \item[(F2)\ ] the parametrised words
      $\omega = \prod_{t \in T} \epsilon_t$ and
      $\nu_\mathbf{b} = \prod_{s \in S(\mathbf{b})}
      \widetilde{\epsilon}_{\mathbf{b},s}$ satisfy
      \begin{equation} \label{equ:multilin-decomp} \uo(
        \mathbf{b} \mathbf{g}) = \uo(\mathbf{b}) \cdot
        \uo(\mathbf{g}) \cdot
        \underline{\nu}_\mathbf{b}(\mathbf{g}) \qquad \text{for all
          $\mathbf{g} \in \mathbf{G}$.}
      \end{equation}
    \end{enumerate}
\end{defn}

\begin{rem} \label{rem:good-para-words} (1) In the definition, the
  product $\dotprod_{s \in S(\mathbf{b})} \widetilde{\epsilon}_{\mathbf{b},s}$ is
  allowed to be empty, in which case \eqref{equ:multilin-decomp}
  simplifies to
  \begin{equation} \label{equ:simple-hom-cond}
    \uo(\mathbf{b} \mathbf{g}) =
    \uo(\mathbf{b}) \cdot \uo(\mathbf{g}).
  \end{equation}
  Such a strong relation holds, for instance, if the parametrised word
  $\omega = \prod_{t \in T} \epsilon_t$ has degree~$1$ and defines a
  homomorphism~$\mathbf{G} \to G$ that factors through the $q$th
  coordinate, if the single free variable occurring in $\omega$ is
  $\eta_{q,i}$ for some $i \in \mathbb{N}$.  In this special
  situation, \eqref{equ:simple-hom-cond} holds uniformly for all
  $\mathbf{b} \in \mathbf{G}$.

  (2) If the $\ell$-friendly $\mathfrak{E}$-product
  $\dotprod_{t \in T} \epsilon_t$ is minimal with respect
  to~$\preceq_\ell$, then $\dotprod_{s \in S(\mathbf{b})} \widetilde{\epsilon}_{\mathbf{b},s}$  
  is necessarily empty for every choice of
  $\mathbf{b} \in \mathbf{G}$.  Furthermore, each $\epsilon_t$ has
  degree $\deg(\varepsilon_t) = 0$, so there is $h \in G$ such that
  for all $\mathbf{g} \in \mathbf{G}$ we have
  $\uo(\mathbf{g}) = \prod_{t \in T}
  \ue_t(\mathbf{g}) = h$.
  Thus~\eqref{equ:multilin-decomp} yields
  $h = h \cdot h \cdot 1 = h^2$, and hence $h=1$.

  In this sense there is only one parametrised word map coming from an
  $\ell$-friendly $\mathfrak{E}$-product for $\mathbf{G}$ that is
  minimal with respect to~$\preceq_\ell$, namely the constant map with
  value~$1$.  In particular, for every $\ell$-friendly
  $\mathfrak{E}$-product $\dotprod_{t \in T} \epsilon_t$ that is
  second smallest with respect to~$\preceq_\ell$, the parametrised
  word $\omega = \prod_{t \in T} \epsilon_t$
  satisfies~\eqref{equ:simple-hom-cond}.
\end{rem}

\begin{rem} \label{rem:E-ell-for-nilpotent} In this paper we use the
  terminology introduced above in the context of nilpotent groups.  We
  indicate how the general set-up specialises.

  Let $G$ be a nilpotent profinite group of class at most~$c$, i.e.\
  $\gamma_{c+1}(G) = 1$.   Denote by $\mathfrak{E}$ the set of all
  left-normed repeated commutators in the free generators $\xi_h$ and
  $\eta_{q,i}$ of $\Omega$, 
   subject to the restriction
    that each $\eta_{q,i}$ appears at most once.
  In other words, $\mathfrak{E}$ consists of all $\gamma_m$-values,
  for $m \ge 2$, that result from replacing the $m$ variables in
  $\gamma_m = [x_1,x_2,\ldots,x_m]$ by arbitrary free generators
  $\xi_h$ and $\eta_{q,i}$ of $\Omega$,  subject to the
    restriction that each $\eta_{q,i}$ appears at most once.

  For instance, given some element $a \in G$,
   \[ 
  \epsilon_1 = [\xi_a,\eta_{1,1}, \xi_a, \eta_{2,1}, \eta_{2,2},
  \eta_{2,3 }] \in \mathfrak{E},
   \]
   whereas
   $\epsilon_2 = [\xi_a,\eta_{1,1}, \xi_a, \eta_{2,1}, \eta_{2,2},
   \eta_{2,1}]$
   does not lie in ~$\mathfrak{E}$, even though
   $\ue_1(\cdot) =\ue_2(\cdot)$.  We set
   $\mathsf{W} = \mathbb{N}_0 \times \mathbb{N}_0$, equipped with the
   lexicographic order~$\le$; so, for instance, $(2,10) \le (3,0)$ and
   $(5,4) \le (5,7)$.
  
   Every $\epsilon \in \mathfrak{E}$, by definition, belongs to
   $\gamma_2(\Omega)$.  Let $k(\epsilon)$ denote the maximal
   $j \in \{1, \ldots , c+1\}$ such that
   $\epsilon \in \gamma_j(\Omega)$, and define a length function on
   $\mathfrak{E}$ by associating to $\epsilon$ the length
   \[
   \ell(\epsilon) = \big( c+1-k(\epsilon),\deg(\epsilon) \big) \in
   \mathsf{W}.
   \]
   For instance, if $c = 8$ then
   $\ell(\epsilon_1) = (8+1-6,4) = (3,4)$.
\end{rem}

\begin{lemma} \label{lem:write-as-elem-prod} Suppose that the
  profinite group $G$ is nilpotent of class at most~$c$, and let
  $\mathfrak{E}$ be defined as in
  Remark~\textup{\ref{rem:E-ell-for-nilpotent}}.  Let
  $\omega \in \gamma_k(\Omega)$, where $k \ge 2$, be such that
  $\uo(\mathbf{1}) = 1$.  Then there exists an $\mathfrak{E}$-product
  $\dotprod_{t \in T} \epsilon_t$, with
  $\epsilon_t \in \gamma_k(\Omega)$ and $\deg(\epsilon_t) \ge 1$ for
  all $t \in T$, such that
  \[
  \uo( \mathbf{g}) = \prod\nolimits_{t \in T} \ue_t
  ( \mathbf{g}) \qquad \text{for all
    $\mathbf{g} \in \mathbf{G}$.}
  \]
\end{lemma}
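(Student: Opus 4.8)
The plan is to first use the hypothesis $\uo(\mathbf 1)=1$ to reduce to the case where $\omega$ lies in the normal closure $N=\langle\eta_{q,i}\mid 1\le q\le \kr,\ i\in\mathbb N\rangle^{\Omega}$ of the free variables, and then to appeal to the structure of the lower central series of $\Omega$, which I shall analyse modulo $\gamma_{c+1}(\Omega)$. For the reduction I would introduce the retraction $\rho\colon\Omega\to\Omega$ fixing every $\xi_h$ and sending every $\eta_{q,i}$ to $1$; its kernel is $N$ and its image is $P=\langle\xi_h\mid h\in G\rangle$. Then $\omega=\omega'\cdot(\omega\rho)$ with $\omega'=\omega\,(\omega\rho)^{-1}$, and since $\rho$ preserves the lower central series one has $\omega'\in N\cap\gamma_k(\Omega)$. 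As $\omega\rho\in P$ involves no $\eta$-generator, its value map is the constant $\mathbf g\mapsto\uo(\mathbf 1)=1$, whence $\uo(\mathbf g)=\underline{\omega'}(\mathbf g)$ for all $\mathbf g\in\mathbf G$, and we may replace $\omega$ by $\omega'$. Thus from now on $\omega\in N\cap\gamma_k(\Omega)$. Moreover $\gamma_{c+1}(G)=1$, so every substitution homomorphism $\Omega\to G$ kills $\gamma_{c+1}(\Omega)$ and the value map of $\omega$ is unaffected if $\omega$ is altered modulo $\gamma_{c+1}(\Omega)$; I shall use this repeatedly.

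The heart of the matter is then the claim that, modulo $\gamma_{c+1}(\Omega)$, every element of $N\cap\gamma_k(\Omega)$ is a product of left-normed commutators of weight $\ge k$ in the free generators, each of which involves at least one $\eta$-generator and \emph{each appearing with exponent $+1$}. I would prove this by downward induction on $k$, the case $k\ge c+1$ being vacuous since then the value map is identically $1$. For the step from $k+1$ to $k$ one uses the standard facts that $\gamma_k(\Omega)/\gamma_{k+1}(\Omega)$ is spanned by the images of the left-normed weight-$k$ commutators in the free generators, and that those involving an $\eta$-generator span the part coming from $N$ (this rests on the multidegree grading of the free Lie ring and on the fact that it is spanned by left-normed monomials); writing $\omega\equiv\prod_\alpha e_\alpha^{\,n_\alpha}\pmod{\gamma_{k+1}(\Omega)}$ with such $e_\alpha$ and pushing the residual factor into $N\cap\gamma_{k+1}(\Omega)$, the case $k$ follows from the case $k+1$. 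Eliminating the negative exponents $n_\alpha$ is the one delicate point: in the free Lie ring $[y_1,y_2,\dots,y_m]=-[y_2,y_1,y_3,\dots,y_m]$ by antisymmetry of the bracket, so the inverse of a left-normed commutator agrees modulo $\gamma_{m+1}(\Omega)$ with the commutator obtained by transposing its first two entries, and the correction term lies again in $N$ and is absorbed by a further downward induction on the weight $m\le c$.

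It remains to translate this into an $\mathfrak E$-product. A left-normed commutator in the free generators involving at least one $\eta$-generator belongs to $\mathfrak E$ after the trivial renaming of repeated occurrences of a single $\eta_{q,i}$ by fresh generators $\eta_{q,i'}$, which leaves the value map unchanged; each resulting $\epsilon_t$ has weight $\ge k$, hence $\epsilon_t\in\gamma_k(\Omega)$, and satisfies $\deg(\epsilon_t)\ge 1$. Since every step above preserves the value map, one arrives at $\uo(\mathbf g)=\prod_{t\in T}\ue_t(\mathbf g)$ for all $\mathbf g\in\mathbf G$, i.e.\ at the desired $\mathfrak E$-product $\dotprod_{t\in T}\epsilon_t$. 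I expect the main obstacle to be the bookkeeping in the middle paragraph: carrying the condition $\omega\in N$ through the induction and, above all, removing every inverse of a commutator so that only a genuine product of $\mathfrak E$-words survives.
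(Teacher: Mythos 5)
Your argument is correct and follows essentially the same route as the paper's: induction on $c+1-k$, collection of $\omega$ modulo $\gamma_{k+1}(\Omega)$ into left-normed weight-$k$ commutators, elimination of inverses via the transposition $[\varkappa_1,\varkappa_2,\varkappa_3,\ldots]^{-1}\equiv[\varkappa_2,\varkappa_1,\varkappa_3,\ldots]$, and renaming of repeated $\eta_{q,i}$ by fresh generators. The only real difference is cosmetic: you discard the parameter-only part at the outset via the retraction $\rho$ killing the $\eta$'s, whereas the paper keeps the degree-$0$ factors, collects them at the front of the product and uses $\uo(\mathbf{1})=1$ to see that their constant product equals~$1$.
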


\begin{proof} 
  It suffices to prove, by induction on $c+1-k$, that
  \begin{equation}\label{equ:omega-decomposition}
    \omega \equiv \prod_{\substack{t \in T \text{ s.t.} \\
        \deg(\epsilon_t)=0}} \epsilon_t \cdot \prod_{\substack{t \in T
        \text{ s.t.} \\ \deg(\epsilon_t) \neq 0}} \epsilon_t \mod
    \gamma_{c+1}(\Omega).
  \end{equation}
  for a suitable index set $T$ and suitable left-normed repeated
  commutators
  $\epsilon_t = [\varkappa_{t,1}, \ldots, \varkappa_{t,n(t)}]$, where
  $n(t) \ge k$ and the terms $\varkappa_{t,j}$ stand for suitable free
  generators $\xi_h$ and $\eta_{q,i}$ of~$\Omega$.  Indeed, using the
  infinite supply of generators $\eta_{q,i}$, we can rename the free
  generators entering into the commutators $\epsilon_t$ to ensure that
  $\epsilon_t \in \mathfrak{E}$, without changing the resulting word
  maps~$\ue_t(\cdot)$.  Furthermore, the identity
  \[
  1= \uo( \mathbf{1}) = \prod_{\substack{t \in T \text{ s.t.} \\
      \deg(\epsilon_t)=0}} \ue_t ( \mathbf{1}) \cdot
  \prod_{\substack{t \in T \text{ s.t.} \\ \deg(\epsilon_t)>0}} \ue_t
  ( \mathbf{1}) = \prod_{\substack{t \in T \text{ s.t.} \\
      \deg(\epsilon_t)=0}} \ue_t ( \mathbf{1}) 
  \]
  and the fact that $\ue_t(\cdot)$ is constant whenever
  $ \deg(\epsilon_t)=0$, shows that
  \[
  \uo( \mathbf{g}) = \prod_{\substack{t \in T \text{ s.t.} \\
      \deg(\epsilon_t)>0}} \ue_t ( \mathbf{g}) \qquad \text{for all
    $\mathbf{g} \in \mathbf{G}$.}
  \]

  For $k=c+1$ the congruence~\eqref{equ:omega-decomposition} holds
  upon setting $T=\varnothing$.  Now suppose that $k < c+1$.  As
  $\omega \in \gamma_k(\Omega)$ is a product of $\gamma_k$-values in
  $\Omega$, basic commutator manipulations
  (compare~\cite[Proposition~1.2.1]{Se09}) yield that $\omega $ can be
  written as a product
  \[ 
  \omega = \prod\nolimits_{t \in T(1)} \epsilon_t \cdot \nu
  \]
  of repeated commutators $\epsilon_t$ of the form
  $[\varkappa_{t,1}, \ldots, \varkappa_{t,k}]$ or
  $[\varkappa_{t,1}, \ldots , \varkappa_{t,k}]^{-1}$, where the terms
  $\varkappa_{t,j}$ stand for suitable free generators of~$\Omega$,
  and an element $\nu \in \gamma_{k+1}(\Omega)$.  Modulo
  $\gamma_{k+1}(\Omega)$, the basic relation
  \[ 
  [\varkappa_1,\varkappa_2,\varkappa_3,\ldots,\varkappa_k]^{-1} \equiv
  [[\varkappa_1,\varkappa_2]^{-1},\varkappa_3,\ldots,\varkappa_k]
  \equiv [\varkappa_2,\varkappa_1,\varkappa_3,\ldots,\varkappa_k]
  \]
  holds; thus we can even avoid using terms with exponent~$-1$.

  By induction, $\nu$ can be written as a product 
  \[
  \nu \equiv \prod\nolimits_{t \in T(2)} \epsilon_t \mod
  \gamma_{c+1}(\Omega). 
  \]
  of suitable repeated commutators.  Denote by $T$ the ordinal sum of
  $T(1)$ and $T(2)$, i.e.\ the disjoint union equipped with the total
  order in which every $t_1 \in T(1)$ precedes every $t_2 \in T(2)$
  and where $T(1)$ and $T(2)$ are ordered as before.  This yields
   \[
   \omega \equiv \prod\nolimits_{t \in T} \epsilon_t \mod
   \gamma_{c+1}(\Omega).
   \]
   At the expense of creating extra factors of degree at least~$1$, we
   can rearrange the factors in the product so that, after enlarging
   the index set and renaming the relevant factors, we arrive
   at~\eqref{equ:omega-decomposition}.
\end{proof}

\begin{lemma} \label{lem:formal} Suppose that the profinite group $G$
  is nilpotent of class at most~$c$, and let $\mathfrak{E}$ be defined
  as in Remark~\textup{\ref{rem:E-ell-for-nilpotent}}.  Let $k \ge 2$
  and let $\dotprod_{t \in T} \epsilon_t$ be an
  $\mathfrak{E}$-product, where $\epsilon_t \in \gamma_k(\Omega)$ and
  $\deg (\epsilon_t) \ge 1$ for all $t \in T$.  Let
  $\mathbf{b} = (b_1,\ldots,b_r) \in \mathbf{G}$.  Then there exists a
  parametrised word $\nu = \nu_\mathbf{b} \in \gamma_{k+1}(\Omega)$
  such that
  \[
  \prod_{t \in T} \ue_t( \mathbf{b} \mathbf{g})= \prod_{t \in T}
  \ue_t( \mathbf{b} )\cdot \prod_{t \in T} \ue_t( \mathbf{g}) \cdot
  \prod_{t \in T} \tilde\ue_t (\mathbf{b} /\!\!/ \mathbf{g}) \cdot
  \underline\nu (\mathbf{g}) \qquad \text{for all
    $\mathbf{g} \in \mathbf{G}$,}
  \]
  where $ \tilde \epsilon_t (\mathbf{b} /\!\!/ \cdot)$ denotes the
  $\mathfrak{E}$-product involving (in some implicit order) the
  $2^{\deg(\epsilon_t)} - 2$ factors that result from $\epsilon_t$ by
  replacing a selection of at least one, but not all distinct free
  variables $\eta_{q,i}$ occurring in $\epsilon_t$ by~$\xi_{b_q}$.
  Moreover,
  \[
  \dotprod\nolimits_{t \in T} \tilde\epsilon_t (\mathbf{b} /\!\!/
  \cdot) \prec_\ell \dotprod\nolimits_{t \in T} \epsilon_t \qquad
  \text{and} \qquad \underline\nu (\mathbf{1}) = 1.
  \] 
\end{lemma}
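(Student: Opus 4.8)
The plan is to recast the asserted relation -- which concerns the parametrised word maps $\ue_t(\cdot)\colon\mathbf{G}\to G$ -- as a single formal congruence in the free group $\Omega$ modulo $\gamma_{k+1}(\Omega)$, obtained from a multilinear expansion of commutators. Assume $T\neq\varnothing$ and, discarding trivial factors (which contribute nothing to anything in sight), that $\epsilon_t\neq1$ for all $t\in T$. Write $\epsilon_t=[\varkappa_{t,1},\ldots,\varkappa_{t,n(t)}]$, with the terms $\varkappa_{t,j}$ free generators of $\Omega$ and the free variables $\eta_{q,i}$ among them pairwise distinct; let $D_t$ be the set of those slots $j$ for which $\varkappa_{t,j}$ is a free variable, so $\lvert D_t\rvert=\deg(\epsilon_t)\geq1$. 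Since a nontrivial length-$n(t)$ left-normed commutator in free generators does not lie in $\gamma_{n(t)+1}(\Omega)$, the hypothesis $\epsilon_t\in\gamma_k(\Omega)$ forces $n(t)\geq k$. First I would observe that substituting $\mathbf{b}\mathbf{g}$ into $\uo$ is the same as replacing, inside each $\epsilon_t$, every free variable $\eta_{q,i}$ by the formal product $\xi_{b_q}\eta_{q,i}\in\Omega$ -- call the resulting parametrised word $\widehat\epsilon_t$ -- and then substituting $\mathbf{g}$. As $\eta_{q,i}\mapsto\xi_{b_q}\eta_{q,i}$ extends to an endomorphism of $\Omega$, which preserves the lower central series, we get $\widehat\epsilon_t\in\gamma_k(\Omega)$; and $\prod_{t\in T}\ue_t(\mathbf{b}\mathbf{g})=\underline W(\mathbf{g})$ for all $\mathbf{g}\in\mathbf{G}$, where $W\in\Omega$ is the product $\prod_{t\in T}\widehat\epsilon_t\in\gamma_k(\Omega)$.

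Next I would expand each $\widehat\epsilon_t$ by standard commutator calculus (in the spirit of \cite[Proposition~1.2.1]{Se09}). For $S\subseteq D_t$ let $\epsilon_t^{[S]}\in\Omega$ be the left-normed commutator obtained from $\epsilon_t$ by replacing, for each $j\in S$ with $\varkappa_{t,j}=\eta_{q,i}$, the free variable $\eta_{q,i}$ by the parameter $\xi_{b_q}$. Then $\widehat\epsilon_t\equiv\prod_{S\subseteq D_t}\epsilon_t^{[S]}\pmod{\gamma_{n(t)+1}(\Omega)}$, hence modulo $\gamma_{k+1}(\Omega)$, with the factors in any order. Each $\epsilon_t^{[S]}$ is again a length-$n(t)$ left-normed commutator in free generators, so $\epsilon_t^{[S]}\in\gamma_k(\Omega)\cap\mathfrak{E}$; among the $2^{\deg(\epsilon_t)}$ terms, the ``full'' one $S=D_t$ has degree $0$ and evaluates to $\ue_t(\mathbf{b})$, the ``empty'' one $S=\varnothing$ is $\epsilon_t$ itself, and the remaining $2^{\deg(\epsilon_t)}-2$ ``mixed'' ones $\varnothing\subsetneq S\subsetneq D_t$ are precisely the factors constituting $\tilde\epsilon_t(\mathbf{b}/\!\!/\cdot)$. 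Since $\gamma_k(\Omega)/\gamma_{k+1}(\Omega)$ is central in $\Omega/\gamma_{k+1}(\Omega)$ (using $k\geq2$) and all factors lie in $\gamma_k(\Omega)$, I can rearrange $\prod_t\widehat\epsilon_t\equiv\prod_t\prod_{S\subseteq D_t}\epsilon_t^{[S]}$ freely modulo $\gamma_{k+1}(\Omega)$; collecting full terms, then empty terms, then mixed terms gives $W\equiv B\,A\,C\pmod{\gamma_{k+1}(\Omega)}$, where $B,A,C\in\Omega$ are obtained by multiplying out $\dotprod\nolimits_{t}\epsilon_t^{[D_t]}$, $\dotprod\nolimits_{t}\epsilon_t$ and $\dotprod\nolimits_{t}\tilde\epsilon_t(\mathbf{b}/\!\!/\cdot)$, so that $\underline B(\mathbf{g})=\prod_t\ue_t(\mathbf{b})$, $\underline A(\mathbf{g})=\prod_t\ue_t(\mathbf{g})$ and $\underline C(\mathbf{g})=\prod_t\tilde\ue_t(\mathbf{b}/\!\!/\mathbf{g})$. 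I would then simply \emph{define} $\nu=\nu_\mathbf{b}:=C^{-1}A^{-1}B^{-1}W\in\Omega$: the last congruence gives $\nu\in\gamma_{k+1}(\Omega)$, and evaluating at $\mathbf{g}$ reproduces $\prod_t\ue_t(\mathbf{b}\mathbf{g})=\underline W(\mathbf{g})=\underline B(\mathbf{g})\,\underline A(\mathbf{g})\,\underline C(\mathbf{g})\,\underline\nu(\mathbf{g})$, which is exactly the asserted factorisation.

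The two side conditions then come out cheaply. Putting $\mathbf{g}=\mathbf{1}$: as $\deg(\epsilon_t)\geq1$, at least one slot of $\epsilon_t$ gets the value $1$, so $\ue_t(\mathbf{1})=1$; likewise, as $D_t\smallsetminus S\neq\varnothing$ for a mixed $S$, at least one slot of $\epsilon_t^{[S]}$ gets the value $1$, so $\tilde\ue_t(\mathbf{b}/\!\!/\mathbf{1})=1$; since $\ue_t(\mathbf{b}\cdot\mathbf{1})=\ue_t(\mathbf{b})$, the factorisation forces $\underline\nu(\mathbf{1})=1$. For $\dotprod\nolimits_{t}\tilde\epsilon_t(\mathbf{b}/\!\!/\cdot)\prec_\ell\dotprod\nolimits_{t}\epsilon_t$ I compare lengths in $\mathsf{W}=\mathbb{N}_0\times\mathbb{N}_0$: a mixed $\epsilon_t^{[S]}$ has commutator length $n(t)$, hence $k(\epsilon_t^{[S]})\geq k(\epsilon_t)$, so its first coordinate does not exceed that of $\epsilon_t$, while its degree has strictly dropped (to at most $\deg(\epsilon_t)-\lvert S\rvert<\deg(\epsilon_t)$); hence $\ell(\epsilon_t^{[S]})<\ell(\epsilon_t)\leq\max_{t'\in T}\ell(\epsilon_{t'})$. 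Taking the maximum over the finitely many mixed terms -- or, when there is none, using that $\max_{t'}\ell(\epsilon_{t'})$ is strictly above the least element $(0,0)$ of $\mathsf{W}$ because every $\deg(\epsilon_{t'})\geq1$ -- gives the required strict inequality.

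The step I expect to need the most care is the commutator expansion above: one must be certain that its error term genuinely lands in $\gamma_{n(t)+1}(\Omega)\subseteq\gamma_{k+1}(\Omega)$, and not merely in a subgroup generated by commutators of larger weight in the substituted letters. This is where it matters that the substituted letters $\xi_{b_q}$ and $\eta_{q,i}$ are genuine free generators of $\Omega$ and that the ambient commutators have weight $n(t)\geq k$. Once this is granted, the rest is routine bookkeeping, kept harmless by the centrality of $\gamma_k(\Omega)/\gamma_{k+1}(\Omega)$ in $\Omega/\gamma_{k+1}(\Omega)$, which lets me disregard the order of collected factors throughout.
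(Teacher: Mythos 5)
Your proposal is correct and follows essentially the same route as the paper: the multilinear expansion of each left-normed commutator modulo $\gamma_{n(t)+1}(\Omega)\subseteq\gamma_{k+1}(\Omega)$ into the $2^{\deg(\epsilon_t)}$ terms indexed by subsets of the free-variable slots, followed by collecting the full, empty and mixed terms using centrality of $\gamma_k(\Omega)/\gamma_{k+1}(\Omega)$, is exactly the ``basic commutator manipulation'' the paper invokes via \cite[Proposition~1.2.1]{Se09}, and your verification of the two side conditions matches the paper's. The only difference is presentational: you perform the substitution formally in $\Omega$ and define $\nu$ as an explicit quotient $C^{-1}A^{-1}B^{-1}W$, whereas the paper expands each $\epsilon_t$ separately and then merges the error terms.
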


\begin{proof}
  As $\deg (\epsilon_t) \ge 1$, basic commutator manipulations
  (compare~\cite[Proposition~1.2.1]{Se09}) yield that, for
  each~$t \in T$, there exists $\nu_t \in \gamma_{k+1}(\Omega)$ such
  that
  \[ 
  \ue_t ( \mathbf{b} \mathbf{g}) = \ue_t ( \mathbf{b} ) \, \ue_t
  (\mathbf{g}) \, \tilde\ue_t (\mathbf{b} /\!\!/ \mathbf{g}) \,
  \underline\nu_t (\mathbf{g}) \qquad \text{for all
    $\mathbf{g} \in \mathbf{G}$.}
  \]
  Moreover, by construction each factor of the $\mathfrak{E}$-product
  $\tilde\epsilon_t (\mathbf{b}/\!\!/ \cdot)$ has degree at least~$1$,
  hence $\tilde\ue_t (\mathbf{b}/\!\!/ \mathbf{1})=1$.

All the words $\epsilon_t$,
  $\tilde\epsilon_t (\mathbf{b}/\!\!/ \cdot)$ and $\nu_t$, for
  $t \in T$, commute with one another modulo $\gamma_{k+1}(\Omega)$.
  Hence there exists $\nu \in \gamma_{k+1}(\Omega)$ such that, for all
  $\mathbf{g} \in \mathbf{G}$,
  \begin{equation} \label{equ:exp-e-ps-nu}
    \begin{split}
      \prod_{t \in T} \ue_t( \mathbf{b} \mathbf{g}) %
      & = \prod_{t \in T} \big( \ue_t (\mathbf{b} ) \,
      \ue_t(\mathbf{g}) \, \tilde\ue_t (\mathbf{b}/\!\!/\mathbf{g}) \,
      \underline\nu_t (\mathbf{g}) \big) \\
      & = \prod_{t \in T} \ue_t( \mathbf{b} )\cdot \prod_{t \in T}
      \ue_t( \mathbf{g}) \cdot \prod_{t \in T} \tilde\ue_t
      (\mathbf{b}/\!\!/ \mathbf{g}) \cdot \underline\nu (\mathbf{g}).
    \end{split}
  \end{equation}
  Since every factor of $\tilde\epsilon_t (\mathbf{b}/\!\!/ \cdot)$
  has degree strictly smaller than $\epsilon_t$, we deduce that
  \[
  \dotprod\nolimits_{t \in T} \tilde\epsilon_t (\mathbf{b}/\!\!/
  \cdot) \prec_\ell \dotprod\nolimits_{t \in T} \epsilon_t.
  \]
  Finally, substituting $\mathbf{1}$ for $\mathbf{g}$
  in~\eqref{equ:exp-e-ps-nu}, we see that
  $\underline\nu (\mathbf{1})=1$.
\end{proof} 

\begin{lemma} \label{rem:gamma_c} Suppose that the profinite group $G$
  is nilpotent of class at most~$c$, and let $\mathfrak{E}$ and $\ell$
  be defined as in Remark~\textup{\ref{rem:E-ell-for-nilpotent}}.  Let
  $\omega \in \gamma_k(\Omega)$, where $k \ge 2$, be such that
  $\uo(\mathbf{1}) = 1$.  Then there exists an $\ell$-friendly
  $\mathfrak{E}$-product $\dotprod_{t \in T} \epsilon_t$ such that
  \[
  \uo( \mathbf{g}) = \prod\nolimits_{t \in T} \ue_t
  ( \mathbf{g}) \qquad \text{for all
    $\mathbf{g} \in \mathbf{G}$.}\]
\end{lemma}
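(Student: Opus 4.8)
\medskip
\noindent\emph{Proof proposal for Lemma~\textup{\ref{rem:gamma_c}}.} The plan is to reduce the statement to a purely formal assertion about $\mathfrak{E}$-products and then prove the latter by well-founded induction along $\prec_\ell$. We may assume $G \ne 1$, so $c \ge 1$. First I would apply Lemma~\ref{lem:write-as-elem-prod} to the given $\omega$: since $\omega \in \gamma_k(\Omega)$ with $k \ge 2$ and $\uo(\mathbf{1}) = 1$, there is an $\mathfrak{E}$-product $\dotprod_{t \in T} \epsilon_t$ with $\epsilon_t \in \gamma_k(\Omega)$ and $\deg(\epsilon_t) \ge 1$ for all $t \in T$, such that $\uo(\mathbf{g}) = \prod_{t \in T} \ue_t(\mathbf{g})$ for all $\mathbf{g} \in \mathbf{G}$. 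It therefore suffices to establish the following claim: \emph{every $\mathfrak{E}$-product $\dotprod_{t \in T}\epsilon_t$ all of whose factors satisfy $\deg(\epsilon_t) \ge 1$ is $\ell$-friendly}. I would prove this by induction on the $\preceq_\ell$-class of the product, that is, on $\max\{\ell(\epsilon_t) : t \in T\} \in \mathsf{W}$; this is legitimate because $\prec_\ell$ admits no infinite descending chains. In the base case the product is $\preceq_\ell$-minimal, so by the defining property of a length function each $\epsilon_t$ has degree $0$; the hypothesis then forces $T = \varnothing$, and the empty product is $\ell$-friendly (the base of the recursion in Definition~\ref{def:good-pw}: take $S(\mathbf{b}) = \varnothing$, so that \eqref{equ:multilin-decomp} reads $1 = 1 \cdot 1$).

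For the inductive step, let $\dotprod_{t \in T}\epsilon_t$ be non-minimal with all $\deg(\epsilon_t) \ge 1$, and set $k = \min_{t \in T} k(\epsilon_t)$; then $k \ge 2$, every $\epsilon_t$ lies in $\gamma_k(\Omega)$, and $\max_t \ell(\epsilon_t) = (c+1-k,\, D)$ with $D = \max\{\deg(\epsilon_t) : k(\epsilon_t) = k\} \ge 1$. If $k \ge c+1$ then, as $\gamma_{c+1}(G) = 1$, every map $\ue_t(\cdot)$ is trivial, hence $\uo \equiv 1$ and the empty witness $S(\mathbf{b}) = \varnothing$ works for every $\mathbf{b}$; so assume $k \le c$. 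Fix $\mathbf{b} \in \mathbf{G}$. Lemma~\ref{lem:formal} yields a parametrised word $\nu = \nu_\mathbf{b} \in \gamma_{k+1}(\Omega)$ with $\underline\nu(\mathbf{1}) = 1$ such that
\[
\prod_{t \in T}\ue_t(\mathbf{b}\mathbf{g}) = \uo(\mathbf{b}) \cdot \uo(\mathbf{g}) \cdot \prod_{t \in T}\tilde\ue_t(\mathbf{b}/\!\!/\mathbf{g}) \cdot \underline\nu(\mathbf{g}) \qquad \text{for all } \mathbf{g} \in \mathbf{G},
\]
together with $\dotprod_{t \in T}\tilde\epsilon_t(\mathbf{b}/\!\!/\cdot) \prec_\ell \dotprod_{t \in T}\epsilon_t$; moreover every factor of the $\mathfrak{E}$-product $\dotprod_{t}\tilde\epsilon_t(\mathbf{b}/\!\!/\cdot)$ has positive degree. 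Next I would apply Lemma~\ref{lem:write-as-elem-prod} to $\nu$ (valid since $k+1 \ge 2$, $\underline\nu(\mathbf{1}) = 1$, and $k+1 \le c+1$), obtaining an $\mathfrak{E}$-product $\dotprod_{u \in U}\epsilon'_u$ with $\epsilon'_u \in \gamma_{k+1}(\Omega)$, $\deg(\epsilon'_u) \ge 1$ and $\underline\nu(\mathbf{g}) = \prod_{u \in U}\underline{\epsilon'_u}(\mathbf{g})$. Let $\dotprod_{s \in S(\mathbf{b})}\widetilde\epsilon_{\mathbf{b},s}$ be the ordinal sum of $\dotprod_t\tilde\epsilon_t(\mathbf{b}/\!\!/\cdot)$ followed by $\dotprod_u\epsilon'_u$; all its factors have positive degree, and with $\omega = \prod_t\epsilon_t$ and $\nu_\mathbf{b} = \prod_s\widetilde\epsilon_{\mathbf{b},s}$ the displayed identity is precisely \eqref{equ:multilin-decomp}, so (F2) holds.

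It then remains to verify (F1), that is, $\dotprod_s\widetilde\epsilon_{\mathbf{b},s} \prec_\ell \dotprod_t\epsilon_t$ and $\dotprod_s\widetilde\epsilon_{\mathbf{b},s} \in \mathfrak{F}$. The maximal length over the ordinal sum is the larger of the maximal lengths over its two pieces. The first piece is $\prec_\ell \dotprod_t\epsilon_t$ by Lemma~\ref{lem:formal}. For the second, each $\epsilon'_u \in \gamma_{k+1}(\Omega)$ has $k(\epsilon'_u) \ge k+1$, so $\ell(\epsilon'_u)$ has first coordinate at most $c-k < c+1-k$; hence $\ell(\epsilon'_u) < (c+1-k, D) = \max_t\ell(\epsilon_t)$. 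Thus both pieces, and so their ordinal sum, are $\prec_\ell \dotprod_t\epsilon_t$. Finally $\dotprod_s\widetilde\epsilon_{\mathbf{b},s}$ is an $\mathfrak{E}$-product all of whose factors have positive degree and whose $\preceq_\ell$-class is strictly below that of $\dotprod_t\epsilon_t$, so it lies in $\mathfrak{F}$ by the induction hypothesis. This gives (F1) and (F2) for every $\mathbf{b} \in \mathbf{G}$, whence $\dotprod_t\epsilon_t \in \mathfrak{F}$; the induction, and the lemma, follow.

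The step I expect to be the crux is ensuring that the remainder $\nu$ produced by Lemma~\ref{lem:formal} strictly lowers the induction parameter. This is what forces the choice of $k$ as the \emph{minimal} $\gamma$-depth $k(\epsilon_t)$ occurring among the factors (so that the first coordinate of $\max_t\ell(\epsilon_t)$ equals $c+1-k$), followed by the re-expression of $\nu \in \gamma_{k+1}(\Omega)$ via Lemma~\ref{lem:write-as-elem-prod} through commutators of strictly greater $\gamma$-depth and hence strictly smaller length; it also explains why the induction parameter must be the $\preceq_\ell$-class of the product rather than $k$ itself, since $k$ does not decrease along the recursion coming from the $\nu$-part. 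The remaining ingredients — positivity of the degrees of all new factors and the exact shape of \eqref{equ:multilin-decomp} — are read off directly from Lemmas~\ref{lem:write-as-elem-prod} and~\ref{lem:formal}, and the degenerate case $\uo \equiv 1$ is dispatched by the empty witness.
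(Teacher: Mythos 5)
Your proposal is correct and follows essentially the same route as the paper: decompose $\omega$ via Lemma~\ref{lem:write-as-elem-prod}, then for each $\mathbf{b}$ combine Lemma~\ref{lem:formal} with a second application of Lemma~\ref{lem:write-as-elem-prod} to the remainder $\nu \in \gamma_{k+1}(\Omega)$, take the ordinal sum as the witness $\dotprod_{s}\widetilde{\epsilon}_{\mathbf{b},s}$, and close the argument by induction along~$\preceq_\ell$. Your version merely tidies the bookkeeping -- stating the inductive claim explicitly (every $\mathfrak{E}$-product with all factors of positive degree is $\ell$-friendly) and spelling out why the $\gamma_{k+1}$-part drops in length via the first coordinate of $\ell$ -- where the paper runs a nominal outer induction on $c+1-k$ together with the same inner induction on~$\preceq_\ell$.
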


\begin{proof}
  We argue by induction on~$c+1-k$.  If $c+1-k \le 0$, then
  $\uo(\cdot)$ is the constant map with value~$1$, and the assertion
  holds trivially; compare Remark~\ref{rem:good-para-words}.

  Now suppose that $c +1-k \ge 1$.  We may suppose, in addition, that
  $\omega \not \in \gamma_{k+1}(\Omega)$.
  By Lemma~\ref{lem:write-as-elem-prod}, there exists an
  $\mathfrak{E}$-product $\dotprod_{t \in T} \epsilon_t$, with
  $\epsilon_t \in \gamma_k(\Omega)$ and $\deg(\epsilon_t) \ge 1$ for
  all $t \in T$, such that
  \[
  \uo( \mathbf{g}) = \prod\nolimits_{t \in T} \ue_t ( \mathbf{g})
  \qquad \text{for all $\mathbf{g} \in \mathbf{G}$.}
  \]

  We claim that $\dotprod_{t \in T} \epsilon_t$ is $\ell$-friendly;
  our task is to check the conditions laid out in
  Definition~\ref{def:good-pw}.  We argue by induction with respect to
  the pre-order~$\preceq_\ell$.  Let $\mathbf{b} \in \mathbf{G}$.
  By Lemma~\ref{lem:formal}, there exists a parametrised word
  $\nu \in \gamma_{k+1}(\Omega)$ such that
  \begin{equation}\label{equ:e1}
    \prod_{t \in T} \ue_t( \mathbf{b}  \mathbf{g})=
    \prod_{t \in T} \ue_t( \mathbf{b} )\cdot \prod_{t \in T}
    \ue_t(\mathbf{g}) \cdot  \prod_{t \in T} \tilde\ue_t
    (\mathbf{b}/\!\!/
    \mathbf{g})  
    \cdot \underline\nu (\mathbf{g}) 
    \qquad \text{for all
      $\mathbf{g} \in \mathbf{G}$,}
  \end{equation}
  where the $\mathfrak{E}$-product
  $\dotprod_{t \in T} \tilde\epsilon_t (\mathbf{b}/\!\!/ \cdot)$
  and $\nu$ satisfy
  \begin{equation}\label{equ:e2}
    \dotprod\nolimits_{t \in T} \tilde\epsilon_t (\mathbf{b}/\!\!/  \cdot)
    \prec_\ell \dotprod\nolimits_{t \in T} \epsilon_t \qquad
    \text{and} \qquad 
    \underline\nu (\mathbf{1}) = 1. 
  \end{equation}
  By Lemma~\ref{lem:write-as-elem-prod} there exists an
  $\mathfrak{E}$-product $\dotprod_{s \in S} \tilde\epsilon_s$, with
  $\tilde\epsilon_s \in \gamma_{k+1}(\Omega)$ and
  $\deg(\tilde\epsilon_s) \ge 1$ for $s \in S$, such that
  \[
  \underline\nu( \mathbf{g}) = \prod\nolimits_{s\in S}
  \tilde\ue_s(\mathbf{g}) \qquad \text{for all
    $\mathbf{g} \in \mathbf{G}$.}
  \]
  Consider the $\mathfrak{E}$-product
  $\dotprod_{t \in T} \tilde\epsilon_t (\mathbf{b}/\!\!/ \cdot )
  \dotprod_{s \in S} \tilde\epsilon_s$,
  formally based on the ordinal sum of $T$ and~$S$, and set
  $\tilde \omega = \prod_{t \in T} \tilde\epsilon_t (\mathbf{b}/\!\!/
  \cdot ) \prod_{s \in S} \tilde\epsilon_s$.
  From~\eqref{equ:e2} and the fact that
  $\tilde\epsilon_s \in \gamma_{k+1}(\Omega)$ we deduce that
  \[
  \dotprod\nolimits_{t \in T} \tilde\epsilon_t (\mathbf{b}/\!\!/ \cdot
  ) \dotprod\nolimits_{s \in S} \tilde\epsilon_s \prec_\ell
  \dotprod\nolimits_{t \in T} \epsilon_t.
  \]
  Moreover, substituting $\mathbf{1}$ for $\mathbf{g}$
  in~\eqref{equ:e1}, we obtain
   \[
   \underline{\tilde \omega}(\mathbf{1}) = \prod\nolimits_{t \in T}
   \tilde\ue_t (\mathbf{b}/\!\!/ \mathbf{1}) \cdot \underline \nu(
   \mathbf{1}) = \left( \prod\nolimits_{t \in T} \ue_t (
     \mathbf{1})\right)^{-1} = \underline\omega(\mathbf{1})^{-1} = 1.
   \]
   Hence, by induction with respect to~$\preceq_\ell$, we conclude
   that
   $\dotprod_{t \in T} \tilde\epsilon_t (\mathbf{b}/ \cdot )
   \dotprod_{s \in S} \tilde\epsilon_s$
   is $\ell$-friendly and thus all the conditions in
   Definition~\ref{def:good-pw} are satisfied.
\end{proof}
 
\begin{lemma} \label{lem:identity-for-U} Let
  $\ell \colon \mathfrak{E} \to \mathsf{W}$ be a length function and
  let $\omega = \prod_{t \in T} \epsilon_t$, where
  $\dotprod_{t \in T} \epsilon_t$ is an $\ell$-friendly $r$-valent
  $\mathfrak{E}$-product for~$\mathbf{G}$.

  Suppose that $\mathbf{V} \le_\mathrm{c} \mathbf{G}$ is such that
  $\mathbf{V}_\omega = \{ \uo(\mathbf{v}) \mid
  \mathbf{v} \in \mathbf{V} \}$
  contains less than $2^{\aleph_0}$ elements.  Then there exists
  $\mathbf{U} \le_\mathrm{o} \mathbf{V}$ such that
  \[
  \uo(\mathbf{u}) = 1 \qquad \text{for all
    $\mathbf{u} \in  \mathbf{U} $.}
  \]
\end{lemma}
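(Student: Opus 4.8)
The plan is to prove the statement by well-founded induction on the $\ell$-friendly $\mathfrak{E}$-product $\dotprod_{t \in T} \epsilon_t$, ordered by the strict pre-order $\prec_\ell$; this is legitimate because, as already observed, there are no infinite descending chains of $\mathfrak{E}$-products with respect to $\prec_\ell$. Since the group $\mathbf{G}$ is fixed throughout and only the closed subgroup $\mathbf{V} \le_\mathrm{c} \mathbf{G}$ varies, the induction hypothesis I carry along is: the assertion of the lemma holds whenever $\dotprod_{t \in T} \epsilon_t$ is replaced by a strictly $\prec_\ell$-smaller $\ell$-friendly $\mathfrak{E}$-product for $\mathbf{G}$ and $\mathbf{V}$ is replaced by an arbitrary closed subgroup of $\mathbf{G}$. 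Condition (F1) guarantees that the smaller products supplied by friendliness are again friendly for $\mathbf{G}$, so this is genuinely the same statement lower down. For the base case I take $\dotprod_{t \in T} \epsilon_t$ to be minimal with respect to $\preceq_\ell$; then Remark~\ref{rem:good-para-words}(2) tells us that $\uo(\cdot)$ is the constant map with value~$1$, so $\mathbf{U} = \mathbf{V}$ works.

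For the inductive step I would first exploit the hypothesis $\lvert \mathbf{V}_\omega \rvert < 2^{\aleph_0}$ through Proposition~\ref{pro:instead-of-Baire}. The map $\mathbf{V} \to G$, $\mathbf{v} \mapsto \uo(\mathbf{v})$, is continuous between non-empty profinite spaces and has image $\mathbf{V}_\omega$ of size $< 2^{\aleph_0}$; hence it cannot be nowhere locally constant, so there is a non-empty $W \subseteq_\mathrm{o} \mathbf{V}$ on which $\uo(\cdot)$ is constant. Shrinking, I find $\mathbf{b} \in W$ and an open subgroup $\mathbf{U}_0 \le_\mathrm{o} \mathbf{V}$ with $\mathbf{b}\mathbf{U}_0 \subseteq W$ (cosets of open subgroups form a basis for the topology of the profinite group $\mathbf{V}$); note $\mathbf{b} \in \mathbf{V} \subseteq \mathbf{G}$ and $\mathbf{U}_0 \le_\mathrm{c} \mathbf{G}$. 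Thus $\uo(\mathbf{b}\mathbf{u}) = \uo(\mathbf{b})$ for all $\mathbf{u} \in \mathbf{U}_0$. Now friendliness of $\dotprod_{t \in T} \epsilon_t$ at this particular $\mathbf{b}$ supplies an $\ell$-friendly $\mathfrak{E}$-product $\dotprod_{s \in S(\mathbf{b})} \widetilde{\epsilon}_{\mathbf{b},s} \prec_\ell \dotprod_{t \in T} \epsilon_t$ whose associated word $\nu_\mathbf{b} = \prod_{s \in S(\mathbf{b})} \widetilde{\epsilon}_{\mathbf{b},s}$ satisfies, by (F2), the identity $\uo(\mathbf{b}\mathbf{g}) = \uo(\mathbf{b}) \cdot \uo(\mathbf{g}) \cdot \underline{\nu}_\mathbf{b}(\mathbf{g})$ for all $\mathbf{g} \in \mathbf{G}$. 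Substituting $\mathbf{g} = \mathbf{u} \in \mathbf{U}_0 \subseteq \mathbf{G}$ and cancelling $\uo(\mathbf{b})$ gives $\underline{\nu}_\mathbf{b}(\mathbf{u}) = \uo(\mathbf{u})^{-1}$ for all $\mathbf{u} \in \mathbf{U}_0$.

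It remains to descend. This last identity shows that $\{\underline{\nu}_\mathbf{b}(\mathbf{u}) \mid \mathbf{u} \in \mathbf{U}_0\}$ consists of inverses of elements of $\mathbf{V}_\omega$ and hence has fewer than $2^{\aleph_0}$ elements. Applying the induction hypothesis to the strictly $\prec_\ell$-smaller $\ell$-friendly $\mathfrak{E}$-product $\dotprod_{s \in S(\mathbf{b})} \widetilde{\epsilon}_{\mathbf{b},s}$, with $\mathbf{U}_0$ in the role of $\mathbf{V}$, I obtain an open subgroup $\mathbf{U} \le_\mathrm{o} \mathbf{U}_0$ on which $\underline{\nu}_\mathbf{b}$ is identically~$1$; then $\uo(\mathbf{u}) = \underline{\nu}_\mathbf{b}(\mathbf{u})^{-1} = 1$ for all $\mathbf{u} \in \mathbf{U}$, and $\mathbf{U} \le_\mathrm{o} \mathbf{V}$ by transitivity of openness, as required. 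I do not expect a deep obstacle in any single step; the points that need care are (i) phrasing the induction so that the friendliness data $\dotprod_{s \in S(\mathbf{b})} \widetilde{\epsilon}_{\mathbf{b},s}$ is literally a smaller instance of the \emph{same} claim -- which is exactly why the hypothesis must be allowed to range over all closed subgroups of the fixed group $\mathbf{G}$, not merely over $\mathbf{V}$ -- and (ii) the passage from ``$\uo(\cdot)$ is locally constant'' to ``$\uo(\cdot)$ is trivial on an open coset $\mathbf{b}\mathbf{U}_0$'', where Proposition~\ref{pro:instead-of-Baire}, the group structure, and the profinite topology together do the real work.
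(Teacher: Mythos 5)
Your proposal is correct and follows essentially the same route as the paper's proof: apply Proposition~\ref{pro:instead-of-Baire} to find a coset $\mathbf{b}\mathbf{U}_1$ on which $\uo(\cdot)$ is constant, use condition (F2) of friendliness at that $\mathbf{b}$ to convert this into the identity $\underline{\nu}_\mathbf{b}(\mathbf{u})=\uo(\mathbf{u})^{-1}$ on the open subgroup, and then invoke the induction hypothesis (over $\prec_\ell$, with the subgroup allowed to vary) for the strictly smaller friendly product $\nu_\mathbf{b}$. The only difference is that you spell out more explicitly the passage from ``locally constant somewhere'' to ``constant on a coset of an open subgroup'' and the precise form of the induction hypothesis, both of which the paper leaves implicit.
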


\begin{proof}
  We argue by induction, using the pre-order $\preceq_\ell$.  If
  $\dotprod_{t \in T} \epsilon_t$ is minimal with respect
  to~$\preceq_\ell$, the assertion holds for $\mathbf{U} = \mathbf{V}$, by
  Remark~\ref{rem:good-para-words}.

  Now suppose that $\dotprod_{t \in T} \epsilon_t$ is not minimal.  As
  $\lvert \mathbf{V}_\omega \rvert < 2^{\aleph_0}$,
  Proposition~\ref{pro:instead-of-Baire} implies that there are
  $\mathbf{b} \in \mathbf{V}$ and
  $\mathbf{U}_1 \le_\mathrm{o} \mathbf{V}$ such that
  $\uo(\cdot)$ is constant on the coset
  $\mathbf{b} \mathbf{U}_1$, i.e.\
  \[  
  \uo(\mathbf{b}) = \uo( \mathbf{b}
  \mathbf{u}) \qquad \text{for all $\mathbf{u} \in \mathbf{U}_1$}.
  \]

  By Definition~\ref{def:good-pw},  we obtain
  \[
  \uo( \mathbf{b} \mathbf{g}) =
  \uo(\mathbf{b}) \cdot \uo(\mathbf{g})
  \cdot \underline{\nu}(\mathbf{g}) \qquad \text{for all
    $\mathbf{g} \in \mathbf{G}$,}
  \]
  where $\nu = \prod_{s \in S} \widetilde{\epsilon}_s$ for an
  $\ell$-friendly $r$-valent $\mathfrak{E}$-product
  $\dotprod_{s \in S} \widetilde{\epsilon}_s$ for~$\mathbf{G}$ such
  that
  $\dotprod_{s \in S} \widetilde{\epsilon}_s \prec_\ell \dotprod_{t
    \in T} \epsilon_t$.  This yields
  \[
  \underline{\nu}(\mathbf{u}) = \uo(\mathbf{u})^{-1}
  \qquad \text{for all $\mathbf{u} \in \mathbf{U}_1$};
  \]
  in particular,
  $(\mathbf{U}_1)_\nu = \{ \underline{\nu}(\mathbf{u}) \mid \mathbf{u}
  \in \mathbf{U}_1 \}$
  has less than $2^{\aleph_0}$ elements.  By induction, we find the
  desired
  $\mathbf{U} \le_\mathrm{o} \mathbf{U}_1 \le_\mathrm{o} \mathbf{V}$
  such that
  \[
  \uo(\mathbf{u}) =
  \underline{\nu}(\mathbf{u})^{-1} = 1 \qquad \text{for all
    $\mathbf{u} \in \mathbf{U}$.}  \qedhere
  \]
\end{proof}

\begin{prop} \label{pro:good-words-OK} Let
  $\ell \colon \mathfrak{E} \to \mathsf{W}$ be a length function and
  let $\omega = \prod_{t \in T} \epsilon_t$, where
  $\dotprod_{t \in T} \epsilon_t$ is an $\ell$-friendly $r$-valent
  $\mathfrak{E}$-product for~$\mathbf{G}$.  Suppose that
  $\mathbf{V} \le_\mathrm{c} \mathbf{G}$ is such that
  $\mathbf{V}_\omega = \{ \uo(\mathbf{v}) \mid
  \mathbf{v} \in \mathbf{V} \}$
  has less than $2^{\aleph_0}$ elements.  Then $\mathbf{V}_\omega$ is
  already finite.
\end{prop}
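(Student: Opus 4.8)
The plan is to prove the statement by well-founded induction on the $\mathfrak{E}$-product $\dotprod_{t \in T} \epsilon_t$ with respect to the pre-order $\preceq_\ell$; this is legitimate precisely because $\prec_\ell$ admits no infinite descending chains. For the base of the induction, suppose $\dotprod_{t \in T} \epsilon_t$ is minimal with respect to $\preceq_\ell$.  Then, by Remark~\ref{rem:good-para-words}, the map $\uo(\cdot)$ is the constant map with value~$1$, so $\mathbf{V}_\omega = \{1\}$ is finite and there is nothing to prove.

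For the inductive step, assume $\dotprod_{t \in T} \epsilon_t$ is not minimal.  First I would invoke Lemma~\ref{lem:identity-for-U} to produce an open subgroup $\mathbf{U} \le_\mathrm{o} \mathbf{V}$ --- which is then closed in $\mathbf{G}$, so $\mathbf{U} \le_\mathrm{c} \mathbf{G}$ --- with $\uo(\mathbf{u}) = 1$ for all $\mathbf{u} \in \mathbf{U}$, and fix a finite left transversal $R$ for $\mathbf{U}$ in $\mathbf{V}$, so that $\mathbf{V} = \bigcup_{\mathbf{b} \in R} \mathbf{b}\mathbf{U}$.  For each $\mathbf{b} \in R$, Definition~\ref{def:good-pw} supplies an $\ell$-friendly $\mathfrak{E}$-product $\dotprod_{s \in S(\mathbf{b})} \widetilde{\epsilon}_{\mathbf{b},s} \prec_\ell \dotprod_{t \in T} \epsilon_t$ whose associated parametrised word $\nu_\mathbf{b} = \prod_{s \in S(\mathbf{b})} \widetilde{\epsilon}_{\mathbf{b},s}$ satisfies $\uo(\mathbf{b}\mathbf{g}) = \uo(\mathbf{b}) \cdot \uo(\mathbf{g}) \cdot \underline{\nu}_\mathbf{b}(\mathbf{g})$ for all $\mathbf{g} \in \mathbf{G}$.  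Restricting $\mathbf{g}$ to range over $\mathbf{U}$ and using $\uo(\mathbf{u}) = 1$ collapses this to $\uo(\mathbf{b}\mathbf{u}) = \uo(\mathbf{b}) \cdot \underline{\nu}_\mathbf{b}(\mathbf{u})$ for all $\mathbf{u} \in \mathbf{U}$, so that $\{ \uo(\mathbf{b}\mathbf{u}) \mid \mathbf{u} \in \mathbf{U} \} = \uo(\mathbf{b}) \cdot (\mathbf{U})_{\nu_\mathbf{b}}$, where $(\mathbf{U})_{\nu_\mathbf{b}} = \{ \underline{\nu}_\mathbf{b}(\mathbf{u}) \mid \mathbf{u} \in \mathbf{U} \}$.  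Since $\mathbf{b}\mathbf{U} \subseteq \mathbf{V}$, the left-hand set sits inside $\mathbf{V}_\omega$, whence $(\mathbf{U})_{\nu_\mathbf{b}}$ has fewer than $2^{\aleph_0}$ elements.  Now the induction hypothesis --- applied to the strictly $\prec_\ell$-smaller $\ell$-friendly product $\dotprod_{s \in S(\mathbf{b})} \widetilde{\epsilon}_{\mathbf{b},s}$, the closed subgroup $\mathbf{U} \le_\mathrm{c} \mathbf{G}$, and the parametrised word $\nu_\mathbf{b}$ --- gives that $(\mathbf{U})_{\nu_\mathbf{b}}$ is finite.  Hence $\{ \uo(\mathbf{b}\mathbf{u}) \mid \mathbf{u} \in \mathbf{U} \}$ is finite for each of the finitely many $\mathbf{b} \in R$, and therefore $\mathbf{V}_\omega = \bigcup_{\mathbf{b} \in R} \{ \uo(\mathbf{b}\mathbf{u}) \mid \mathbf{u} \in \mathbf{U} \}$ is finite, as required.

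The bulk of the difficulty has already been absorbed by the parametrised-word framework (Lemmas~\ref{lem:write-as-elem-prod}--\ref{lem:identity-for-U}), so for this proposition there is no serious obstacle; the one point that genuinely needs care is to run the induction against $\prec_\ell$ rather than against a numerical invariant, so that the friendly products $\nu_\mathbf{b}$ delivered by Definition~\ref{def:good-pw} legitimately count as ``smaller'' and the recursion terminates.  A secondary but routine point is verifying that the cardinality hypothesis descends to $(\mathbf{U})_{\nu_\mathbf{b}}$; this is automatic because that set is, up to left translation by $\uo(\mathbf{b})$, a subset of $\mathbf{V}_\omega$.  Everything else is a direct unwinding of Lemma~\ref{lem:identity-for-U} together with property~(F2) of friendly products.
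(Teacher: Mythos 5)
Your proposal is correct and follows essentially the same route as the paper's own proof: well-founded induction along $\prec_\ell$, with Lemma~\ref{lem:identity-for-U} supplying the open subgroup $\mathbf{U}$ on which $\uo(\cdot)$ vanishes, property (F2) collapsing $\uo(\mathbf{b}\mathbf{u})$ to $\uo(\mathbf{b})\cdot\underline{\nu}_\mathbf{b}(\mathbf{u})$ on each coset, and the induction hypothesis applied to the $\prec_\ell$-smaller friendly products $\nu_\mathbf{b}$. Your explicit remark that $\mathbf{U}$ is closed in $\mathbf{G}$ (so the induction hypothesis genuinely applies) is a small point the paper leaves implicit, but otherwise the two arguments coincide.
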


\begin{proof}
  We argue by induction, using the pre-order $\preceq_\ell$.  If
  $\dotprod_{t \in T} \epsilon_t$ is minimal with respect
  to~$\preceq_\ell$, the assertion holds, by
  Remark~\ref{rem:good-para-words}: indeed,
  $\mathbf{V}_\omega = \{1\}$.
  
  Now suppose that $\dotprod_{t \in T} \epsilon_t$ is not minimal.  By
  Lemma~\ref{lem:identity-for-U}, there exists
  $\mathbf{U} \le_\mathrm{o} \mathbf{V}$ such that
  \[
  \uo(\mathbf{u}) = 1 \qquad \text{for all
    $\mathbf{u} \in \mathbf{U}$.}
  \]
  Let $B$ be any set of coset representatives for $\mathbf{U}$
  in~$\mathbf{V}$ so that
  $\lvert B \rvert = \lvert \mathbf{V} : \mathbf{U} \rvert < \infty$.

  By Definition~\ref{def:good-pw}, we see that, for each of the
  finitely many coset representatives $\mathbf{b} \in B$,
  \[
  \uo( \mathbf{b} \mathbf{u}) =
  \uo(\mathbf{b}) \cdot
  \underbrace{\uo(\mathbf{u})}_{=1} \cdot
  \underline{\nu}_{\mathbf{b}}(\mathbf{u}) =
  \underbrace{\uo(\mathbf{b})}_{\text{constant}}
  \underline{\nu}_{\mathbf{b}}(\mathbf{u}) \qquad \text{for
    $\mathbf{u} \in \mathbf{U}$,}
  \]
  where
  $\nu_{\mathbf{b}} = \prod_{s \in S(\mathbf{b})}
  \widetilde{\epsilon}_{\mathbf{b},s}$
  for an $\ell$-friendly $r$-valent $\mathfrak{E}$-product
  $\dotprod_{s \in S(\mathbf{b})} \widetilde{\epsilon}_{\mathbf{b},s}$
  for~$\mathbf{G}$ such that
  $\dotprod_{s \in S(\mathbf{b})} \widetilde{\epsilon}_{\mathbf{b},s}
  \prec_\ell \dotprod_{t \in T} \epsilon_t$.
  In particular, for each $\mathbf{b} \in B$ the set
  \[
  \mathbf{U}_{\nu_\mathbf{b}} = \{
  \underline{\nu}_\mathbf{b}(\mathbf{u}) \mid \mathbf{u} \in \mathbf{U}
  \} = \{ \uo(\mathbf{b})^{-1} \uo(
  \mathbf{b} \mathbf{u}) \mid \mathbf{u} \in \mathbf{U} \} \subseteq
  \uo(\mathbf{b})^{-1} \mathbf{V}_\omega
  \]
  has less than $2^{\aleph_0}$ elements.  By induction, each
  $\mathbf{U}_{\nu_\mathbf{b}}$ is finite, hence also the finite union
  \[
  \mathbf{V}_\omega = \bigcup_{\mathbf{b} \in B} \uo(\mathbf{b})
  \mathbf{U}_{\nu_\mathbf{b}}. \qedhere
  \]
\end{proof}


\section{Nilpotent groups and specific words} \label{sec:nilp-groups}

In this section we prove Theorem~\ref{thm:w-str-concise-in-nilp-gp} and its corollaries.

\begin{lemma} \label{lem:red-to-comm} Let $\mathcal{C}$ be a class of
  profinite groups such that every commutator word is strongly concise
  in~$\mathcal{C}$.  Then every word is strongly concise
  in~$\mathcal{C}$.
\end{lemma}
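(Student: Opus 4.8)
The plan is to capture the obstruction to the word map $\uo\colon G^r\to G$ being a group homomorphism by a single auxiliary word, to observe that this auxiliary word is automatically a \emph{commutator} word no matter what $w$ is, and then to feed it into the standing hypothesis. So fix an arbitrary word $w=w(x_1,\dots,x_r)$ and a group $G\in\mathcal{C}$ with $\lvert G_w\rvert<2^{\aleph_0}$, and introduce
\[
c=c(x_1,\dots,x_r,y_1,\dots,y_r)=w(x_1y_1,\dots,x_ry_r)\cdot w(y_1,\dots,y_r)^{-1}\cdot w(x_1,\dots,x_r)^{-1}.
\]
A quick inspection of exponent sums in the free group on $x_1,\dots,x_r,y_1,\dots,y_r$ shows that every free generator occurs in $c$ with total exponent $0$, so $c$ lies in the commutator subgroup; that is, $c$ is a commutator word, whatever $w$ is. Moreover each $c$-value is a product of three $w$-values or their inverses, so $G_c$ is contained in the set of products $\{a b^{-1} c^{-1}\mid a,b,c\in G_w\}$ and hence $\lvert G_c\rvert\le\lvert G_w\rvert^{3}<2^{\aleph_0}$. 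By hypothesis $c$ is strongly concise in $\mathcal{C}$, so $K:=c(G)$ is finite; being a verbal subgroup, it is normal in $G$.

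Next I would pass to $G/K$. By construction $c(g_1h_1,\dots,g_rh_r)\in K$ for all $g_i,h_i\in G$, and this says precisely that $w(g_1h_1,\dots,g_rh_r)$ and $w(g_1,\dots,g_r)\,w(h_1,\dots,h_r)$ lie in the same coset of $K$. Hence the continuous map $\beta\colon G^r\to G/K$, $\mathbf{g}\mapsto\uo(\mathbf{g})K$, is a group homomorphism with respect to the componentwise multiplication on $G^r$. Its image $\{\uo(\mathbf{g})K\mid\mathbf{g}\in G^r\}$, which I write as $G_wK/K$, is therefore a subgroup of $G/K$, and it is closed, being the continuous image of the compact group $G^r$. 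Since $G_w$ topologically generates $w(G)$, its image $G_wK/K$ topologically generates $w(G)K/K$; but $G_wK/K$ is already a closed subgroup, so $w(G)K/K=G_wK/K$. Thus $w(G)K/K$ is a closed subgroup of the profinite group $G/K$ with at most $\lvert G_w\rvert<2^{\aleph_0}$ elements. An infinite profinite group has at least $2^{\aleph_0}$ elements (apply Proposition~\ref{pro:instead-of-Baire} to its identity map), so $w(G)K/K$ is finite; as $K$ is finite, $w(G)$ is finite, which is what we want.

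The one genuinely substantive point is the opening move: recognising that the ``homomorphism-defect'' word $c$ always lands in the commutator subgroup of the relevant free group, so that a hypothesis only about commutator words can be brought to bear. Everything after that is soft — cardinal bookkeeping and the standard fact that a continuous homomorphic image of a compact group is closed. The minor routine checks cause no difficulty: that $\kappa<2^{\aleph_0}$ forces $\kappa^{3}<2^{\aleph_0}$ (true since $\kappa^{3}=\kappa$ for infinite $\kappa$), and the degenerate case in which $w$ is a law in $G$ (where $w(G)=1$ already).
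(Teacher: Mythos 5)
Your proof is correct, and it takes a genuinely different route from the paper's. The paper first splits $w=uv$ with $u=x_1^{e_1}\cdots x_r^{e_r}$ and $v\in F'$, invokes the hypothesis for $v$ to reduce to the power word $x^m$ with $m=\gcd(e_1,\dots,e_r)$, and then applies the hypothesis a second time to the specific commutator word $\widetilde v=(x_1x_2)^{-m}x_1^{m}x_2^{m}$ so that $g\mapsto g^m$ becomes a homomorphism modulo a finite normal subgroup. You instead work with the single ``homomorphism-defect'' word $c(\mathbf{x},\mathbf{y})=w(\mathbf{x}\mathbf{y})\,w(\mathbf{y})^{-1}\,w(\mathbf{x})^{-1}$ in $2r$ variables, correctly observe that all exponent sums vanish so that $c$ is a commutator word for \emph{every} $w$, and linearise the whole word map $G^r\to G/c(G)$ in one step. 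Both arguments are sound; yours is more uniform and, notably, invokes the hypothesis only once and only for the group $G$ itself, whereas the paper's proof applies strong conciseness of $\widetilde v$ to the quotient $G/v(G)$ and so tacitly uses that the relevant quotients again lie in $\mathcal{C}$ (harmless in the paper's applications, where $\mathcal{C}$ is quotient-closed, but your version avoids the issue altogether). The paper's decomposition has the mild advantage of making the residual power word $x^m$ explicit, in the spirit of Lemma~\ref{lem:not-comm-periodic}, but for the statement at hand your argument is complete and, if anything, cleaner. All the supporting steps you cite are fine: $\lvert G_c\rvert\le\lvert G_w\rvert^{3}<2^{\aleph_0}$, the image of the continuous homomorphism $G^r\to G/K$ is a closed subgroup of cardinality at most $\lvert G_w\rvert$, and a closed subgroup of a profinite group with fewer than $2^{\aleph_0}$ elements is finite by Proposition~\ref{pro:instead-of-Baire}.
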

          
\begin{proof}
  Let $w = w(x_1,\ldots,x_{\kr}) \in F$, where
  $F = \langle x_1, \ldots, x_{\kr} \rangle$ is a free group of
  rank~$\kr \ge 2$.  Write $w = u v$, where
  $u = x_1^{\, e_1} \cdots x_{\kr}^{\, e_{\kr}}$ with
  $e_1, \ldots, e_{\kr} \in \mathbb{Z}$ and where $v = v(x_1,\ldots,x_{\kr})$
  is a commutator word, i.e.\ $v \in F'$.

  Suppose that $\lvert G_w \rvert <2^{\aleph_0}$.  Let
  $m=\gcd(e_1,\ldots,e_{\kr})$ and choose
  $f_1, \dots , f_{\kr} \in \mathbb{Z}$ such that
  $m = \sum_{i=1}^{\kr} e_i f_i$.  We observe that
  $g^m = w(g^{f_1},\ldots,g^{f_{\kr}})$, for every $g \in G$.  Thus
  $\{ g^m \mid g \in G \}$ has less than $2^{\aleph_0}$ elements, and
  consequently $G_u$ has less than $2^{\aleph_0}$ elements, as
  $m = \gcd(e_1,\ldots,e_{\kr})$. Therefore, also $G_v$ has less than
  $2^{\aleph_0}$ elements.  Since $v$ is strongly concise in~$G$, the
  group $v(G)$ is finite.  Working modulo $v(G)$, we may assume that
  $v(G) = 1$ and $w=u$.  To simplify the notation, we may further
  assume that $w = x_1^{\, m}$.

  As $G_w$ has less than $2^{\aleph_0}$ elements, so does
  $G_{\widetilde{v}}$ for the commutator word
  $\widetilde{v} = \widetilde{v} (x_1,x_2) = (x_1 x_2)^{-m} x_1^{\, m}
  x_2^{\, m}$.
  Since $\widetilde{v}$ is strongly concise in~$G$, the group
  $\widetilde{v}(G)$ is finite.  Working modulo $\widetilde{v}(G)$, we
  may assume that $\widetilde{v}(G) = 1$ and thus
  \[
  (gh)^m = g^m h^m \qquad \text{for all $g,h \in G$.} 
  \]
  Hence $G_w = w(G)$ is the image of the homomorphism $G \to G$,
  $g \mapsto g^m$.  From $\lvert w(G) \rvert < 2^{\aleph_0}$ we
  conclude that $w(G)$ is finite.
\end{proof}

\begin{proof}[Proof of Theorem~{\rm\ref{thm:w-str-concise-in-nilp-gp}}]
  Let $w$ be a group word, and let $G$ be a nilpotent profinite group
  of class~$c$.  Suppose that $\lvert G_w \rvert < 2^{\aleph_0}$.
  We claim that $w(G)$ is finite.  By Lemma~\ref{lem:red-to-comm}, we
  may suppose that $w$ is a commutator word.

  Clearly, $G = \prod_p H_p$, where the product runs over all
  primes~$p$ and $H_p$ denotes the unique Sylow pro-$p$ subgroup
  of~$G$.  We conclude that $G_w = \prod_p (H_p)_w$ and
  $w(G) = \prod_p w(H_p)$.  As $\lvert G_w \rvert < 2^{\aleph_0}$,
  this implies $w(H_p) = 1$ for all but finitely many primes~$p$.

  Consequently, we may suppose that $G$ is a pro-$p$ group.  As $G$ is
  nilpotent, it satisfies the hypothesis of
  Corollary~\ref{cor:reduction-to-pro-p-central-exp-p} (see
  \cite[Theorem~4.1.5]{Se09}).  Hence we may further suppose that
  $w(G)$ is central and of exponent~$p$.  Using
  Lemma 
  ~\ref{rem:gamma_c}, we apply Proposition~\ref{pro:good-words-OK} to
  deduce that $G_w$ is finite.  Thus $w(G)$ is a finitely generated
  elementary abelian group and therefore finite.
\end{proof}

\begin{proof}[Proof of Corollary~{\rm\ref{cor:F-mod-w(F)-nilp-OK}}]
 Let $G$ be a profinite group such that
  $\lvert G_w \rvert < 2^{\aleph_0}$. 
  Suppose that $F/w(F)$ has nilpotency class~$c$. Then
  $\gamma_{c+1} = [x_1,\ldots, x_{c+1}]$ can be written as the product
  of finitely many $w$-values or their inverses.  Hence $\gamma_{c+1}$
  has less than $2^{\aleph_0}$ values in~$G$, and
  Theorem~\ref{thm:mcw-more} implies that $\gamma_{c+1}(G)$ is finite.
  Passing to the quotient $G/\gamma_{c+1}(G)$, we can assume that $G$
  nilpotent and Theorem~\ref{thm:w-str-concise-in-nilp-gp} applies.
\end{proof}

\begin{lemma} \label{lem:red-to-prime-powers}
  Let $m \in \mathbb{N}$ with prime factorisation
  $m = p_1^{\, e_1} \cdots p_k^{\, e_k}$.  Suppose that, for each
  $i \in \{1, \ldots, k\}$, the word $x^{p_i^{e_i}}$ is strongly
  concise in the class of pro-$p_i$ groups.  Then the word $x^m$ is
  strongly concise in the class of all profinite groups.
\end{lemma}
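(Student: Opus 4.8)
The plan is to reduce, via the criteria of Section~\ref{sec:fin-many-w-values}, to showing that the verbal subgroup is generated by finitely many $w$-values, and to obtain the latter from the pro-$p_i$ hypotheses by means of a Sylow decomposition. Throughout let $w=x^m$, let $G$ be a profinite group with $\lvert G_w\rvert<2^{\aleph_0}$, and write $q_i=p_i^{e_i}$; we must show that $N:=w(G)$ is finite. Since $(g^m)^e=(g^e)^m$, the word $x^m$ is weakly rational (and, as it is a law in no group $C_p\wr C_\infty$, it also implies virtual nilpotency), so by Proposition~\ref{pro:weakly-rational} it is enough to prove that $N$ is generated by finitely many $m$-th powers. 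Two preliminary remarks: by Lemma~\ref{lem:not-comm-periodic} the group $G$ is periodic, so every $g\in G$ has finite order and $\overline{\langle g\rangle}$ splits as the direct product of its commuting primary parts; and for every $g\in G$ the $G$-conjugacy class of $g^m$ is contained in $G_w$, because $(g^m)^h=(g^h)^m$, hence has fewer than $2^{\aleph_0}$ elements and so is finite by Lemma~\ref{lem:fin-conj-cl}, i.e.\ $\mathrm{C}_G(g^m)$ is open in $G$.

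Next I would treat the $p_i$-primary part. Fix a Sylow pro-$p_i$ subgroup $P_i$ of $G$. On any procyclic subgroup of $P_i$ the map $y\mapsto y^{m/q_i}$ is a bijection, since $\gcd(m/q_i,p_i)=1$; hence $\{\,y^m\mid y\in\overline{\langle x\rangle}\,\}=\{\,y^{q_i}\mid y\in\overline{\langle x\rangle}\,\}$ for every $x\in P_i$, and on taking unions, $(P_i)_{x^m}=(P_i)_{x^{q_i}}$ as subsets of $G$. This set has at most $\lvert G_w\rvert<2^{\aleph_0}$ elements, so the hypothesis that $x^{q_i}$ is strongly concise in the class of pro-$p_i$ groups gives that $x^{q_i}(P_i)=x^m(P_i)$ is finite. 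Taking the finite set $(P_i)_{x^m}$ of $m$-th powers as generators of $x^m(P_i)$ and recalling that each of them has finite $G$-conjugacy class, we find that the normal closure $M_i:=\langle x^m(P_i)^G\rangle$, which is a normal subgroup of $G$ contained in $N$, is generated by finitely many $m$-th powers.

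Finally, the prime-to-$m$ part. If $\ell\nmid m$ and $h\in G$ is an $\ell$-element, then $h^m$ generates $\overline{\langle h\rangle}$, so $h\in\overline{\langle h^m\rangle}\le N$; hence $N$ contains the closed subgroup $R$ generated by all elements of $G$ of order prime to $m$, and $G/R$ is a pro-$\{p_1,\dots,p_k\}$ group. Decomposing $m$-th powers into primary parts shows $N=\langle R,M_1,\dots,M_k\rangle$; moreover $G/R$ has no nontrivial element of order prime to $m$, so $x^m(G/R)=\langle \overline{M_1},\dots,\overline{M_k}\rangle$ is generated by finitely many $m$-th powers, whence Proposition~\ref{pro:weakly-rational} applied to $G/R$ yields that $N/R=x^m(G/R)$ is finite and thus $R$ is open in $N$. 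It remains to prove that $R$ itself is finite, equivalently that the set $A$ of elements of $G$ of order prime to $m$ is finite; I expect this to be the main obstacle. One knows already that $A\subseteq G_w$ has fewer than $2^{\aleph_0}$ elements, that $A$ is closed under conjugation, inversion and taking powers, that each element of $A$ has open centraliser, and that $R=\overline{\langle A\rangle}$ is open in $N$; also, since a periodic profinite group has finite exponent, $A=\{\,g\in G\mid g^{d}=1\,\}$ for the prime-to-$m$ part $d$ of $\exp(G)$, so $A$ is a closed subset of $G$. The finiteness of $A$ should follow by combining these facts: if $A$ is infinite, then, the inclusion $A\hookrightarrow G$ being locally constant only at isolated points, Proposition~\ref{pro:instead-of-Baire} forces $A$ to have an isolated point $a_0$, and from $a_0$ — using that $\mathrm{C}_G(a_0)$ is open together with a twisted-product computation of $(a_0v)^{d}$ — one produces an open subgroup $V$ of $G$ with $V\cap A=\{1\}$, i.e.\ an open pro-$\{p_1,\dots,p_k\}$ subgroup of $G$, from which a short argument (again invoking $\lvert A\rvert<2^{\aleph_0}$) rules out $A$ being infinite. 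Granting that $A$, and hence $R$, is finite, $N=\langle R,M_1,\dots,M_k\rangle$ is generated by finitely many $m$-th powers, and Proposition~\ref{pro:weakly-rational} gives that $N$ is finite.
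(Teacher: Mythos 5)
Your overall scaffolding (reduce to ``$w(G)$ is generated by finitely many $w$-values'' and finish with Proposition~\ref{pro:weakly-rational}, handling the $p_i$-primary parts via the pro-$p_i$ hypothesis) is sound, and your treatment of the Sylow pro-$p_i$ subgroups is essentially correct. But the proof has a genuine gap exactly where you flag ``the main obstacle'': the finiteness of the set $A$ of elements of order prime to $m$ (equivalently of $R$) is never actually established. The sketch you offer --- an isolated point $a_0$ of $A$, an unspecified ``twisted-product computation of $(a_0v)^d$'' producing an open subgroup $V$ with $V\cap A=\{1\}$, and ``a short argument'' ruling out infinitude --- is not a proof, and the difficulty is real: a closed subset of a profinite group of cardinality less than $2^{\aleph_0}$ need \emph{not} be finite (think of a convergent sequence together with its limit), so no purely topological argument of the kind you sketch can work; knowing that $A$ has an isolated point, or even that $1$ has a neighbourhood meeting $A$ only in $\{1\}$, does not by itself bound $\lvert A\rvert$. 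Some group-theoretic input is indispensable here. You also invoke, in passing, the fact that a periodic profinite group has finite exponent; that is Zelmanov's theorem, far heavier than anything needed, and you would at least have to cite it.

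The paper closes this gap by working with Sylow \emph{subgroups} rather than with the set $A$: for a prime $q\nmid m$, every element of a Sylow pro-$q$ subgroup $Q$ is an $m$th power of one of its own elements, so $Q\subseteq G_w$ and hence $Q$, being a profinite \emph{group} of cardinality less than $2^{\aleph_0}$, is finite; Herfort's theorem \cite{He79} guarantees that only finitely many primes $q$ occur; and since each element of such a $Q$ has finite conjugacy class by Lemma~\ref{lem:fin-conj-cl}, Dicman's Lemma \cite[14.5.7]{Ro96} packages all these Sylow subgroups into a single finite normal subgroup, after which one passes to the quotient and argues only with pro-$\{p_1,\dots,p_k\}$ groups. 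If you replace your paragraph on $A$ by this Sylow/Herfort/Dicman argument, the rest of your proof goes through; note, though, that the paper's endgame is also simpler than yours --- after factoring out the finite normal subgroups $N_i$ produced by Dicman's Lemma from the finite groups $P_i^{\,p_i^{e_i}}$, the group $G$ has exponent dividing $m$ and $w(G)=1$ outright, so no appeal to weak rationality is needed.
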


\begin{proof}
  Put $w = x^m$ and let $G$ be a profinite group such that
  $\lvert G_w \rvert < 2^{\aleph_0}$.  By
  Lemma~\ref{lem:not-comm-periodic}, the group $G$ is periodic.  A
  theorem of Herfort~\cite{He79} yields that the group $G$ has
  non-trivial Sylow pro-$p$ subgroups for only finitely many
  primes~$p$.
  
  Suppose that $q$ is a prime not dividing $m$, and let $Q$ be a Sylow
  pro-$q$ subgroup of~$G$.  Then every element of $Q$ is an $m$th
  power.  Consequently, $Q$ is finite and each of its elements has
  only finitely many conjugates in~$G$, by
  Lemma~\ref{lem:fin-conj-cl}.  Hence $Q$ is contained in a finite
  normal subset of $G$ consisting of elements of finite order.  By
  Dicman's Lemma~\cite[14.5.7]{Ro96}, the group $Q$ is contained in a
  finite normal subgroup of~$G$.
  
  Consequently, there exists a finite normal subgroup
  $N \trianglelefteq_\mathrm{c} G$ that contains all Sylow pro-$q$
  subgroups of~$G$, for primes $q$ not dividing~$m$.  Passing to
  $G/N$, we may suppose that $G$ is a pro-$\{p_1,\ldots,p_k\}$ group.
  Fix $i \in \{1,\ldots,k\}$ and let $P_i$ be a Sylow pro-$p_i$
  subgroup of $G$.  Then the set of $p_i^{\, e_i}$th powers in $P_i$
  is the same as the set of $m$th powers.  Thus the set has less than
  $2^{\aleph_0}$ elements and our assumptions yield that the group
  $K_i = P_i^{\, p_i^{e_i}} = \langle g^{p_i^{e_i}} \mid g \in P_i
  \rangle$
  is finite.  Each element of $K_i$ has only finitely many conjugates
  in~$G$; compare Lemma~\ref{lem:fin-conj-cl}.  Thus $K_i$ is
  contained in a finite normal subgroup $N_i \trianglelefteq G$, again
  by Dicman's Lemma.

  Factoring out the finite normal subgroup $N_1 N_2 \cdots N_k$, we
  may suppose that each of the Sylow pro-$p_i$ subgroups $P_i$ has
  exponent dividing $p_i^{\, e_i}$.  Thus $G$ has exponent dividing
  $m = p_1^{\, e_1} \cdots p_k^{\, e_k}$ and $w(G) =1$
\end{proof}

\begin{proof}[Proof of Corollary~{\rm\ref{cor:specific-words}}]
  By Lemma~\ref{lem:red-to-prime-powers}, the assertion for $x^6$
  follows once we have dealt with the words $x^2$ and $x^3$.  Let $F$
  be a free group of countably infinite rank, and let $w$ be one of
  the specific words, other than $x^6$, that appear in the statement
  of the corollary.  It suffices to show that $F/w(F)$ is nilpotent,
  so Corollary~\ref{cor:F-mod-w(F)-nilp-OK} can be applied.

  \begin{enumerate}
  \item[(i)] $w = x^2$.  It is well known that $F/w(F)$ s abelian.
  \item[(ii)] $w = [x^2,z_1,\ldots,z_r]$.  Extending the argument
    given in (i), we see that $F/w(F)$ is nilpotent of class at
    most~$1+r$.
  \item[(iii)] $w = x^3$.  Since $F/w(F)$ has exponent~$3$, it is a
    $2$-Engel group and thus nilpotent of class at most~$3$, by a
    classical result of Hopkins~\cite{Ho29};
    compare~\cite[12.3.6]{Ro96}.
  \item[(iv)] $w = [x^3,z_1,\ldots,z_r]$ for $r \ge 1$.  Extending the
    argument given in (iii), we see that $F/w(F)$ is nilpotent of
    class at most~$3+r$.
  \item[(v)] $w = [x,y,y]$.  Every $2$-Engel group is nilpotent of
    class at most~$3$.
  \item[(vi)] $w = [x,y,y,z_1,\ldots,z_r]$ for $r \ge 1$. Extending
    the argument given in (v), we see that $F/w(F)$ is nilpotent of
    class at most~$3+r$.  
  \end{enumerate}
\end{proof}

\begin{acknowledgements}
  In connection with our original proof of Theorem~\ref{thm:mcw-more}
  we acknowledge discussions with Marta Morigi; subsequent comments of
  the referee led to a significant simplification of our argument.  We
  also acknowledge the referee's general feedback which led to several
  improvements of the presentation.
\end{acknowledgements}



\begin{thebibliography}{99}


\bibitem{BuMe03} R.\,G.\ Burns, Y.\ Medvedev, \textit{Group laws
    implying virtual nilpotence}, J.\ Aust.\ Math.\ Soc.\ \textbf{74}
  (2003), 295--312.


\bibitem{DeMoSh16} E.\ Detomi, M.\ Morigi, P.\ Shumyatsky, \textit{On
    conciseness of words in profinite groups}, J.\ Pure Appl.\ Algebra
  \textbf{220} (2016), 3010--3015.
  
  
\bibitem{DMS-nilpotent} E.\ Detomi, M.\ Morigi and P.\ Shumyatsky,
  \textit{On profinite groups with word values covered by nilpotent
    subgroups}, Israel J.\ Math.\ \textbf{226} (2018), 993--1008.
 
\bibitem{DMS-cosets} E.\ Detomi, M.\ Morigi and P.\ Shumyatsky,
  \textit{On profinite groups with commutators covered by countably
    many cosets}, J.\ Algebra \textbf{508} (2018), 431--444.


\bibitem{FeSh18} G.\,A.\ Fern\'andez-Alcober, P.\ Shumyatsky,
  \textit{On bounded conciseness of words in residually finite
    groups}, J.\ Algebra \textbf{500} (2018), 19--29.


\bibitem{Gr53} K.\,W.\ Gruenberg, \textit{Two theorems on Engel groups},
  Proc.\ Cambridge Philos.\ Soc.\ \textbf{49} (1953), 377--380.

\bibitem{GuSh15} R.\ Guralnick, P. Shumyatsky, \textit{On rational and
    concise word}, J.\ Algebra \textbf{429} (2015), 213--217.

\bibitem{He79} W.\,N.\ Herfort, \textit{Compact torsion groups and
    finite exponent}, Arch.\ Math.\ (Basel) \textbf{33} (1979/80),
  404--410.

\bibitem{Ho29} C.\ Hopkins, \textit{Finite groups in which conjugate
    operations are commutative}, Amer.\ J.\ Math.\ \textbf{51} (1929),
  35--41.


\bibitem{Iv89} S.\,V.\ Ivanov, \textit{P.\ Hall's conjecture on the
    finiteness of verbal subgroups}, Soviet Math.\ (Iz.\ VUZ)
  \textbf{33} (1989), 59--70.

\bibitem{Ja08} A.\ Jaikin-Zapirain, \textit{On the verbal width of
    finitely generated pro-$p$ groups}, Rev.\ Mat.\ Iberoam.\
  \textbf{24} (2008), 617--630.


\bibitem{Me67} Yu I.\ Merzlyakov, \textit{Verbal and marginal
    subgroups of linear groups}, Dokl.\ Akad.\ Nauk SSSR \textbf{177}
  (1967), 1008--1011.
  
\bibitem{Ro72} D.\,J.\,S.\ Robinson, Finiteness conditions and generalized
  soluble groups, Part 1, Springer-Verlag, New York-Berlin, 1972.

\bibitem{Ro96} D.\,J.\,S.\ Robinson, A course in the theory of groups,
  Springer-Verlag, New York, 1996.

\bibitem{Se09} D.\ Segal, Words: notes on verbal width in groups,
  Cambridge University Press, Cambridge, 2009.
  
 \bibitem{S2} P. Shumyatsky, \textit{Verbal subgroups in residually finite groups}, Q.J. Math. \textbf{51}
(2000) 523--528. 
 
\bibitem{Sh02} P.\ Shumyatsky, \textit{On varieties arising from the
    solution of the restricted Burnside problem}, J.\ Pure Appl.\
  Algebra \textbf{171} (2002), 67--74.

\bibitem{Tu64} R.\,F.\ Turner-Smith, \textit{Marginal subgroup properties
    for outer commutator words}, Proc.\ London Math.\ Soc.\
  \textbf{14} (1964), 321--341.

\bibitem{Wi74} J.\,C.\ Wilson, \textit{On outer-commutator words},
Canadian J.\ Math.\ \textbf{26} (1974), 608--620.




\end{thebibliography}
\end{document}